\def\algbackskip{\hskip-\ALG@thistlm}
\newcommand{\grad}{{\mathbf{grad}}}
\newcommand{\cc}{{\normalfont{\text{c}}}}
\newcommand{\loc}{{\normalfont{\text{loc}}}}
\newcommand{\br}{{\normalfont{\boldsymbol{r}}}}
\newcommand{\dd}{{\normalfont{\text{d}}}}
\definecolor{ao}{rgb}{0.0, 0.5, 0.0}
\newcommand*{\encircled}[1]{\relax\ifmmode\mathpalette\@encircled@math{#1}\else\@encircled{#1}\fi}
\newcommand*{\@encircled@math}[2]{\@encircled{$\m@th#1#2$}}
\newcommand*{\@encircled}[1]{%
  \tikz[baseline,anchor=base]{\node[draw,circle,outer sep=0pt,inner sep=.2ex] {#1};}}
\DeclareSymbolFont{extraup}{U}{zavm}{m}{n}
\DeclareMathSymbol{\varheart}{\mathalpha}{extraup}{86}
\DeclareMathSymbol{\vardiamond}{\mathalpha}{extraup}{87}
\newcommand{\half}{\frac{1}{2}}
\newcommand{\norm}[1]{\left \lVert #1 \right \rVert}
\newcommand{\snorm}[1]{\left \lvert #1 \right \rvert}
\newcommand{\IR}{\mathbb{R}}
\newcommand{\IC}{\mathbb{C}}
\newcommand{\IN}{\mathbb{N}}
\newcommand{{\diff}}[1]{{\normalfont{\text{d}} #1}}
\newcommand{{\Rf}}[1]{{\Re\left\{#1\right \}}}
\newcommand{\OV}{\mathsf{V}}
\newcommand{\OK}{\mathsf{K}}
\newcommand{{\D}}{\normalfont{\text{D}}}
\newcommand{{\G}}{\normalfont{\text{G}}}
\newcommand{{\U}}{\normalfont{\text{U}}}
\newcommand{{\I}}{\normalfont{\text{I}}}
\newcommand{{\A}}{\normalfont{\text{A}}}
\newcommand{\isdef}{\mathrel{\mathrel{\mathop:}=}}
\newcommand{{\bx}}{{\bf x}}
\newcommand{{\by}}{{\bf y}}
\newcommand{{\bz}}{{\bf z}}
\newcommand{\bxref}{\hat{\bx}}
\newcommand{{\bvarphi}}{{\boldsymbol{\varphi}}}
\newcommand{\OA}{\mathsf{A}}
\newcommand{\OT}{\mathsf{T}}
\newcommand{{\N}}{\normalfont{\text{N}}}
\newcommand{{\y}}{{\boldsymbol{{y}}}}
\newcommand{{\z}}{{\boldsymbol{z}}}
\newcommand{{\bc}}{{\bf c}}
\newcommand{{\bd}}{{\bf d}}
\newcommand{\dual}[2]{\left \langle #1,#2 \right \rangle}
\newcommand{\dotp}[2]{\left ( #1,#2 \right )}
\newtheorem{assumption}[theorem]{Assumption}
\newtheorem{example}[theorem]{Example}
\newtheorem{problem}[theorem]{Problem}
\theoremstyle{plain}
\newtheorem{remark}[theorem]{Remark}
\newcommand{\JD}[1]{{\color{blue} #1}}
\numberwithin{equation}{section}
\begin{document}

\title{Parametric Shape Holomorphy of Boundary Integral Operators with Applications}



\author{
J\"{u}rgen D\"{o}lz\thanks{Institute for Numerical Simulation, University of Bonn, Friedrich-Hirzebruch-Allee 7, 53115 Bonn, Germany (\email{doelz@ins.uni-bonn.de}).}
\and 
Fernando Henr\'iquez\thanks{Chair of Computational Mathematics and Simulation Science, \'Ecole Polytechnique F\'ed\'erale de Lausanne, 1015 Lausanne, Switzerland (\email{fernando.henriquez@epfl.ch})
}}


\maketitle

\begin{abstract}
We consider a family of boundary integral operators supported on 
a collection of parametrically defined bounded Lipschitz boundaries.
Consequently, the boundary integral operators themselves also depend on the parametric
variables, thus leading to a parameter-to-operator map.
The main result of this article is to
establish the analytic or holomorphic dependence
of said boundary integral operators upon the parametric variables, i.e.,
of the parameter-to-operator map. As a direct consequence we also
establish holomorphic dependence of solutions to boundary integral
equations, i.e.,~holomorphy of the parameter-to-solution map.
To this end, we construct a holomorphic extension
to complex-valued boundary deformations and investigate the
\emph{complex} Fr\'echet differentiability of
boundary integral operators
with respect to each parametric variable.
The established parametric holomorphy results
have been identified as a key property to overcome the 
so-called curse of dimensionality in the approximation of
parametric maps with distributed, high-dimensional inputs.

To demonstrate the applicability of the derived results, we consider
as a concrete example the sound-soft Helmholtz
acoustic scattering problem and its frequency-robust boundary integral
formulations. For this particular application, we explore the consequences
of our results in reduced order modelling, Bayesian shape inversion,
and the construction of efficient surrogates using artificial neural networks.

\end{abstract}

\section{Introduction}
In this article, we focus on shape-parametric boundary
integral operators (BIOs) of the form
\begin{equation}\label{eq:A_bio}
	\OA_\y\colon 
	L^2(\Gamma_\y)
	\to 
	L^2(\Gamma_\y),
\end{equation}
where $\Gamma_\y\subset\IR^3$ is assumed to be a
bounded Lipschitz boundary depending on the parametric input
$\y\in \mathbb{U} \coloneqq[-1,1]^{\IN}$. 
The BIO in \cref{eq:A_bio}
is assumed to be of the form
\begin{equation}\label{eq:parametric_BIO}
	(\OA_\y\varphi)(\bx)
	\isdef
	\int\limits_{\Gamma_\y}{\sf a}(\by,\bx-\by)\varphi(\by)\mathrm{ds}_\by,
	\quad
	\bx
	\in
	\Gamma_\y,
\end{equation}
for any $\varphi\in L^2(\Gamma_\y)$, where the kernel function
${\sf a}: \Gamma_\y \times \IR^3 \backslash \{{\bf 0}\} \rightarrow \IC$ is bounded
in the first argument, and possibly \emph{singular} when $\bx=\by$
in the second one. A typical example application is the $L^2(\Gamma_\y)$-based 
frequency-stable formulation for the acoustic scattering by random surfaces, 
wherein the boundary's randomness is encoded through a
Karhunen-Lo\`eve-type expansion of a given random field.
This is, for example, the setting considered in \cite{DHJM2022,BG14,GP18,BG12}.

In the following, we are interested in the regularity of the \emph{parameter-to-operator}
map
\begin{equation}\label{eq:param_to_operator}
	\mathbb{U}
	\ni
	\y
	\mapsto\OA_\y
	\in 
	\mathscr{L}
	\left(
		L^2(\Gamma_\y)
        ,
        L^2(\Gamma_\y)
	\right)
\end{equation}
and, assuming  $\OA_\y $ to be boundedly invertible
for each $\y \in \mathbb{U}$, of the \emph{parameter-to-solution}
map
\begin{equation}\label{eq:param_to_solution}
	\mathbb{U}
	\ni
	\y
	\mapsto 
	u_\y
	\isdef
	\OA_\y^{-1} f_\y
	\in L^2(\Gamma_\y),
\end{equation}
where $f_\y\in L^2(\Gamma_\y)$ for each $\y \in \mathbb{U}$. For these maps,
constructing efficient surrogates is a central task in
computational uncertainty quantification and reduced order modelling of parametric partial
differential and boundary integral equations.
However, it is well-known that the so-called curse of dimensionality
in the parameter space renders most na{\"i}ve approximation methods
computationally unfeasible. To overcome this challenge and 
effectively design dimension-robust algorithms for the approximation of the maps in 
\cref{eq:param_to_operator} and \cref{eq:param_to_solution} one requires a better understanding of the \emph{parametric regularity} of the BIOs upon the shape-parametric representations.

In \cite{CCS15}, it is proposed to construct a holomorphic
extension over a tensor product of ellipses in the complex domain
as a key step to finally break the curse of dimensionality in the
sparse polynomial approximation of high-dimensional parametric problems.
Indeed, by varying the size of these ellipses on each parameter, 
i.e. by exploiting the \emph{anisotropic} parameter dependence, this
\emph{$(\boldsymbol{b},p,\varepsilon)$-holomorphy} allows to state provable
convergence rates that do not depend on the input's parametric dimension.
From a computational point of view, $(\boldsymbol{b},p,\varepsilon)$-holomorphy 
constitutes a stepping stone in the analysis and implementation
of a plethora of approximation
techniques such as sparse grid interpolation and quadrature \cite{ZS17,SS13,NTW2008,HHPS2018},
higher-order Quasi-Monte Carlo \cite{DKL14,DLC16,DGLS19,DGLS17}
integration (HoQMC), construction of artificial neural network (ANN)
surrogates \cite{SZ19,HSZ20,OSZ22,HOS22,ABD22}, 
and to establish sparsity of parametric posterior densities 
in Bayesian inverse problems \cite{CS16_2,SS13,SS16}.

The main objective of this work is to derive shape-parametric regularity
results for BIOs of the form \cref{eq:A_bio}.
In addition, we aim to demonstrate the usefulness of this property by 
studying its implications in reduced order modelling, expression
rates of ANN surrogates, and Bayesian shape
inversion.

\subsection{Previous Work}
The analytic dependence of the parameter-to-operator and
parameter-to-solution map is the primary object of study in 
many recent works tackling parametric shape deformations.
In the context of volume-based
variational formulations, this property has been 
established for subsurface flows \cite{CNT2016,HPS2016,HS2022},
time-harmonic
electromagnetic wave scattering by perfectly conducting
and dielectric obstacles \cite{JSZ16,AJS19}, stationary Stokes
and Navier-Stokes equation \cite{CSZ18}, volume formulations 
for acoustic wave scattering by a penetrable obstacle
in a low-frequency regime \cite{HSSS15}, and high-frequency
Helmholtz formulations \cite{SW23,GKS21}. 

As pointed out previously, to establish the sought parametric 
holomorphy property one verifies \emph{complex} Fr\'echet differentiability
of the parameter-to-operator or parameter-to-solution map. 
This approach is strongly connected to the notion of shape differentiability 
of the domain/boundary-to-solution map. 
In \cite{Pot94,Pot96} and \cite{Charam95}, the shape differentiability in the Fr\'echet sense 
of BIOs for acoustic and elastic problems is studied.
This result is extended in \cite{Pot96_3,LC121,LC122} to the BIOs that arise in electromagnetic wave scattering.
The same approach used to obtain these results is also employed in \cite{LC121,LC122,YL16} to establish the infinite differentiability of a collection of BIOs characterized by a class of pseudo-homogeneous kernels.
In \cite{kress1999far,LeLouer12} shape differentiability the elastic obstacle scattering is analysed. The technique presented in \cite{kress1999far} is subsequently extended 
in \cite{haddar2004frechet} to 
acoustic and electromagnetic scattering problems
equipped impedance boundary conditions.
However, regardless of the specific setting considered on the previous works, none of them imply an analytic or 
holomorphic dependance upon shape perturbations. Another drawback of the previously discussed results
is that none of them are capable of systematically account for polygons/polyhedra.

Recently, in \cite{HS21,henriquez2021shape} the holomorphic dependence
of the Calder\'on projector upon boundaries of class $\mathscr{C}^2$ has been established.
Therein, the two-dimensional analysis for both the Laplace and Helmholtz BIOs
is thoroughly analyzed. In \cite{PHJ23} this work has been extended to address
the case of multiple open arcs in two-dimensions. 
In \cite{dalla2022multi,DLM22,dalla2013family}, based on \cite{DR04,LR08}, an
analytic shape differentiability result is established,
and this approach is extended in \cite{DL23} 
to the layer potentials for the heat equation.  
Yet useful in the three dimensional case, these works
still rule out the possibility of polygonal/polyhedral boundaries in the analysis. 

In summary, current results in the literature are based on various smoothness assumptions on the considered boundaries. These smoothness assumptions are a used to deal with the singularities in the kernel function of the BIOs.
Lipschitz boundaries, let alone polygonal/polyhedral ones, are not covered by these results.

\subsection{Contributions}
We establish parametric holomorphy by verifying 
\emph{complex} Fr\'echet differentiability of suitable complex extensions
of the parameter-to-operator map \cref{eq:param_to_operator} and
parameter-to-solution map \cref{eq:param_to_solution} for BIOs on boundaries of Lipschitz domains in three dimensions. This is a substantial relaxation compared to previous results. The considered maps are viewed
as elements of the complex Banach space of bounded linear operators or functions,
with respect to a collection of affine-parametric bounded Lipschitz boundaries.

The contribution of this article is two-fold:
\begin{enumerate}
	\item 
	By considering a collection of affine shape-parametric
	boundaries and suitable assumptions on the integral kernel $\mathsf{a}(\cdot,\cdot)$,
	we prove that the parameter-to-operator map \cref{eq:param_to_operator}
	is $(\boldsymbol{b},p,\varepsilon)$-holomorphic in the sense of
	\cite[Definition 2.1]{CCS15} in a $L^2$-based setting.
	This is achieved by taking the limit of a family of BIOs where the singularity is suitably regularized.
	We conclude that the parameter-to-solution 
	map \cref{eq:param_to_solution} is also $(\boldsymbol{b},p,\varepsilon)$-holomorphic.
	\item 
	We demonstrate the practical applicability of our
	$(\boldsymbol{b},p,\varepsilon)$-holomorphy results
	by exemplarily applying our result to frequency-stable
	integral equations used in the boundary reduction
	of acoustic wave propagation in unbounded domains.
	This, in turn, serves a theoretical foundation for the analysis
	of the computational technique introduced in \cite{DHJM2022}.
\end{enumerate}

We conclude this work by demonstrating the theoretical usability of our
$(\boldsymbol{b},p,\varepsilon)$-holomorphy results
to obtain \emph{best $n$-term} approximation rates for \cref{eq:param_to_operator} and \cref{eq:param_to_solution}.
In addition, we explore the consequence of our result in reduced 
order modelling, Bayesian shape inversion, 
and in the construction of efficient surrogates using ANNs.

\subsection{Outline}
The remainder of the article is structured as follows. First, in \cref{sec:preliminaries}, we recall some preliminary facts on holomorphy in Banach spaces, parametrized surfaces, and boundary integral operators. $(\boldsymbol{b},p,\varepsilon)$-holomorphy of the boundary integral operators is established in \cref{sec:BIO}, whereas the illustration to boundary integral equations from acoustic scattering is done in \cref{sec:application}. \Cref{sec:HDA} demonstrates how the established $(\boldsymbol{b},p,\varepsilon)$-holomorphy leads to various \emph{best-$n$-term} approximation rates and is followed by concluding remarks in \cref{sec:concl}.

\section{Preliminaries}
\label{sec:preliminaries}
Let $\D \subset \IR^{3}$ be a bounded Lispchitz domain \cite[Definition 3.3.1]{HW08}. 
We denote by $L^p(\D)$, $p\in [1,\infty)$, the set of $p$-integrable functions over $\D$
and by $H^s(\D)$, $s\geq0$, standard Sobolev spaces in $\D$.
Let $\mathscr{C}^{0,1}(\partial \D)$ be the space of Lipschitz continuous functions on $\partial \D$ and, assuming sufficient regularity of $\partial \D$, $\mathscr{C}^{1,1}(\partial \D)$ be the space of continuously differentiable functions with Lipschitz continuous derivative on $\partial\D$.
In addition, let $H^s(\partial \D)$, for $s\in[0,1]$, {be the Sobolev}
space of traces on $\partial \D$ (\cite[Section 2.4]{SS10}, \cite[Section 4.2]{HW08}).
As is customary, we identify $H^0(\partial \D)$ with $L^2({\partial \D})$ and, 
for $s\in [0,1]$, $H^{-s}(\partial \D)$ with the dual space of $H^s(\partial \D)$
in the $L^2(\partial \D)$ duality pairing.

For (complex) Banach spaces $X$ and $Y$,
we denote by $\mathscr{L}(X,Y)$ the space of bounded linear operators 
from $X$ into $Y$ and by $\mathscr{L}_{\text{iso}}(X,Y)$ the (open)
subset of isomorphisms, i.e.~bounded linear operators with a bounded inverse. 
Recall that $\mathscr{L}(X,Y)$
is a (complex) Banach space equipped with the standard operator norm \cite[Theorem III.2]{RS80_Vol1}.

Finally, we equip $\mathbb{U}=[-1,1]^{\IN}$ with the product topology.
According to Tychonoff's theorem, this renders $\mathbb{U}$ compact with this topology.

\subsection{Holomorphy in Banach Spaces}
\label{sec:hol_banach}
For the convenience of the reader, we recall the basic facts of
holomorphy in Banach spaces.
To this end, let $X$ and $Y$ be \emph{complex} Banach spaces
equipped with the norms $\norm{\cdot}_{X}$ and $\norm{\cdot}_{Y}$,
respectively. 
\begin{definition}[{\cite[Definition 13.1]{Muj86}}]\label{def:complex_diff}
Let $U$ be an open, nonempty subset of $X$. 
A map $\mathcal{M}:U \rightarrow Y$ is said to be
\emph{complex Fr\'echet differentiable}
if for each $r\in U$ there exists a map
$(\frac{d}{d r} \mathcal{M})(r,\cdot)\in \mathscr{L}(X,Y)$ such that
\begin{align}
	\norm{
	{\mathcal{M}(r+\xi)}
	-
	\mathcal{M}(r)
	- 
	\left(\frac{d}{dr} \mathcal{M}\right)(r,\xi)}_{Y}
	=
	o\left(\norm{\xi}_{X}\right).
\end{align}
We say that $\left(\frac{d}{dr} \mathcal{M}\right)(r,\xi)$
is the \emph{Fr\'echet derivative} of the map
$\mathcal{M}: U \rightarrow Y$ at $r\in U$ in the
direction $\xi\in X$.
\end{definition}

The following results are extensions of some classical results from
complex analysis to Banach spaces.
\begin{theorem}[{\cite[Theorem 14.7]{Muj86}}]\label{thm:holomorphic}
Let $U$ be an open, nonempty subset of $E$. 
For the map $\mathcal{M}:U \rightarrow F$ the 
following conditions are equivalent:
\begin{enumerate}
\item $\mathcal{M}$ is complex Fr\'echet differentiable.
\item $\mathcal{M}$ is infinitely complex Fr\'echet differentiable.
\item $\mathcal{M}$ is holomorphic.
\end{enumerate}
\end{theorem}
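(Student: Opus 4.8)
The plan is to establish the cycle of implications $(3)\Rightarrow(2)\Rightarrow(1)\Rightarrow(3)$, with $(2)\Rightarrow(1)$ immediate from \cref{def:complex_diff} and the bulk of the work concentrated in $(1)\Rightarrow(3)$. Recall (following \cite{Muj86}) that holomorphy of $\mathcal{M}$ means that near each $a\in U$ there is a ball $B(a,\rho)\subset U$ and continuous $m$-homogeneous polynomials $P_m\colon E\to F$ with $\mathcal{M}(r)=\sum_{m\ge 0}P_m(r-a)$, the series converging uniformly on $B(a,\rho)$.

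For $(3)\Rightarrow(2)$, I would start from such a local expansion and differentiate it term by term. The Cauchy estimates for the $P_m$ --- obtained by applying the one-variable Cauchy integral formula along the complex lines $\lambda\mapsto a+\lambda b$ and using the local boundedness built into the definition of holomorphy --- guarantee that the series of derivatives $\sum_{m\ge 1}dP_m(r-a)$, where $dP_m$ is the continuous $(m-1)$-homogeneous polynomial attached to the symmetric multilinear form of $P_m$, again converges uniformly on a slightly smaller ball. Hence $\mathcal{M}$ is complex Fr\'echet differentiable there with holomorphic derivative; iterating yields infinite complex Fr\'echet differentiability, which is $(2)$.

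The core is $(1)\Rightarrow(3)$. Since complex Fr\'echet differentiability implies continuity, I may fix $a\in U$ and $\rho>0$ with $\overline{B}(a,\rho)\subset U$ and $M:=\sup_{\snorm{r-a}_E\le\rho}\norm{\mathcal{M}(r)}_F<\infty$. For each $b\in E$ the chain rule shows $\lambda\mapsto\mathcal{M}(a+\lambda b)$ is complex differentiable on a neighbourhood of $0\in\IC$; being an $F$-valued function of one complex variable with a complex derivative, it satisfies (after reducing to scalars via arbitrary $\phi\in F'$ and Hahn--Banach) the vector-valued Cauchy integral formula, hence expands in a norm-convergent power series with coefficients
\[
  P_m(b):=\frac{1}{2\pi i}\oint_{\lvert\zeta\rvert=\rho/\norm{b}_E}\frac{\mathcal{M}(a+\zeta b)}{\zeta^{m+1}}\,\d\zeta\in F,
  \qquad
  \norm{P_m(b)}_F\le M\left(\frac{\norm{b}_E}{\rho}\right)^{m}.
\]
It then remains to show that each $P_m$ is a continuous $m$-homogeneous polynomial and that $\sum_{m\ge0}P_m(r-a)=\mathcal{M}(r)$ uniformly near $a$. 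Homogeneity and the bound on the unit ball (hence continuity) of $P_m$ are read off the integral formula and the Cauchy estimate; that $P_m$ arises from a bounded symmetric $m$-linear map follows from a polarization argument applied to the joint analyticity of $(\lambda_1,\dots,\lambda_m)\mapsto\mathcal{M}(a+\lambda_1 b_1+\dots+\lambda_m b_m)$ in several complex variables. Finally, the Cauchy estimate gives $\norm{P_m(r-a)}_F\le M(\norm{r-a}_E/\rho)^m$, so the series converges absolutely and uniformly on $B(a,\rho')$ for every $\rho'<\rho$, and it sums to $\mathcal{M}$ because along each complex line through $a$ it reproduces the unique scalar Taylor expansion (a routine Abel/rearrangement argument handling the resulting double series).

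I expect this last step to be the main obstacle: manufacturing, out of the line-by-line G\^ateaux--Taylor data $\{P_m(b)\}$, a genuine $F$-valued power series that converges back to $\mathcal{M}$ near $a$. In infinite dimensions this is not a formality --- without an extra hypothesis the pointwise-defined coefficients need not even be continuous --- and the mechanism that rescues it is precisely the local boundedness of $\mathcal{M}$, which here comes for free from the continuity of a Fr\'echet-differentiable map. This is the infinite-dimensional analogue of the classical principle that a single complex derivative forces analyticity, and it is where the real work of \cite{Muj86} is concentrated.
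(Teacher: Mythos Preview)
The paper does not supply its own proof of this theorem: it is quoted verbatim as \cite[Theorem 14.7]{Muj86} and used as a black box, so there is nothing in the paper to compare against. Your sketch is a faithful outline of the standard argument in Mujica's book --- the cycle $(3)\Rightarrow(2)\Rightarrow(1)\Rightarrow(3)$, with the substantive step $(1)\Rightarrow(3)$ obtained by combining G\^ateaux holomorphy along complex lines with the local boundedness coming from Fr\'echet differentiability, and then polarizing to recover continuous homogeneous polynomials --- and is correct at the level of a proof plan.
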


\begin{remark}
In view of \cref{thm:holomorphic} it sufficies to show 
\emph{complex} Fr\'echet differentiability to establish
a holomorphic dependence. 
\end{remark}


\begin{lemma}[{\cite[Theorem 3.1.5, item c)]{herve2011analyticity}}]\label{lmm:herve}
Let $U$ be an open, nonempty subset of $X$, and 
let $\left(\mathcal{M}_n\right)_{n\in\mathbb{N}}$ 
be a sequence of holomorphic maps from $U \subset X$ to $Y$.
Assume that $\mathcal{M}_n$ converges uniformly to $\mathcal{M}$
in $U$.
Then $\mathcal{M}: U \subset X \rightarrow Y$ is holomorphic.
\end{lemma}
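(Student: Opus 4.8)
This is the Banach-space-valued analogue of the classical Weierstrass convergence theorem, and the plan is to reduce it, via \cref{thm:holomorphic}, to showing that $\mathcal{M}$ is \emph{complex} Fr\'echet differentiable on $U$. What makes the mere \emph{uniform} convergence hypothesis strong enough to control derivatives is, exactly as in the scalar case, the availability of Cauchy estimates for holomorphic functions of a single complex variable; the whole argument is a transcription of that classical mechanism.

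First I would show that the Fr\'echet derivatives $(D\mathcal{M}_n)_{n\in\IN}$ (which exist and are continuous, each $\mathcal{M}_n$ being holomorphic, hence infinitely complex Fr\'echet differentiable by \cref{thm:holomorphic}) converge uniformly on suitable subsets of $U$. Fix $r_0 \in U$ and $\rho > 0$ with $\overline{B}(r_0, 2\rho) \subset U$. For $r \in B(r_0, \rho)$ and $\xi \in E$ with $\norm{\xi}_E = 1$, the map $\lambda \mapsto (\mathcal{M}_n - \mathcal{M}_m)(r + \lambda \xi)$ is, as a composition of an affine map with a holomorphic one, an $F$-valued holomorphic function of the single complex variable $\lambda$ on a neighbourhood of the closed disc $\{\snorm{\lambda} \le \rho\}$, since $r + \lambda\xi$ stays in $B(r_0,2\rho) \subset U$ there. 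The vector-valued Cauchy integral formula then gives
\begin{equation*}
\norm{\bigl(D\mathcal{M}_n(r) - D\mathcal{M}_m(r)\bigr)\xi}_F
\le \frac{1}{\rho}\sup_{\snorm{\lambda} = \rho}\norm{(\mathcal{M}_n - \mathcal{M}_m)(r+\lambda\xi)}_F
\le \frac{1}{\rho}\sup_{U}\norm{\mathcal{M}_n - \mathcal{M}_m}_F .
\end{equation*}
Taking the supremum over unit $\xi$ and using that $(\mathcal{M}_n)$ is uniformly Cauchy on $U$, one sees that $(D\mathcal{M}_n)$ is uniformly Cauchy on $B(r_0,\rho)$, hence converges uniformly there (by completeness of $\mathscr{L}(E,F)$) to some \emph{continuous} map $T\colon B(r_0,\rho) \to \mathscr{L}(E,F)$.

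Next I would identify $T$ as the derivative of $\mathcal{M}$. Since each $\mathcal{M}_n$ is continuously Fr\'echet differentiable, the fundamental theorem of calculus gives $\mathcal{M}_n(r+\xi) - \mathcal{M}_n(r) = \int_0^1 D\mathcal{M}_n(r + t\xi)\,\xi \, \dd t$ for $r, r+\xi \in B(r_0,\rho)$; letting $n \to \infty$ and using the uniform convergence of both $\mathcal{M}_n$ and $D\mathcal{M}_n$ to interchange limit and Bochner integral yields $\mathcal{M}(r+\xi) - \mathcal{M}(r) = \int_0^1 T(r+t\xi)\,\xi \, \dd t$. Hence
\begin{equation*}
\norm{\mathcal{M}(r+\xi) - \mathcal{M}(r) - T(r)\xi}_F
\le \Bigl(\sup_{t \in [0,1]}\norm{T(r+t\xi) - T(r)}_{\mathscr{L}(E,F)}\Bigr)\norm{\xi}_E = o\!\left(\norm{\xi}_E\right)
\end{equation*}
by continuity of $T$ at $r$, so $\mathcal{M}$ is complex Fr\'echet differentiable at $r$ with derivative $T(r)$. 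As $r_0 \in U$ was arbitrary this holds throughout $U$, and \cref{thm:holomorphic} then gives that $\mathcal{M}$ is holomorphic.

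The only genuinely delicate ingredients are the two ``vector-valued'' facts used above: the Cauchy integral formula (and the attendant estimate) for an $F$-valued holomorphic function of one complex variable, and the fundamental theorem of calculus together with the limit--integral interchange for $F$-valued (Bochner) integrals. Both are standard; the former reduces to the scalar case by composing with continuous linear functionals on $F$ and invoking Hahn--Banach, and the latter follows from continuity of the integrands. Everything else is bookkeeping, the essential point being that the Cauchy estimate upgrades uniform control of $(\mathcal{M}_n)$ to uniform control of $(D\mathcal{M}_n)$, which no purely real-variable argument could deliver.
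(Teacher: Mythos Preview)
Your argument is correct and is essentially the standard proof of the Weierstrass convergence theorem in the Banach-space setting: Cauchy estimates for slice functions $\lambda \mapsto \mathcal{M}_n(r+\lambda\xi)$ upgrade uniform convergence of $(\mathcal{M}_n)$ to local uniform convergence of $(D\mathcal{M}_n)$, and then the fundamental theorem of calculus identifies the limit as $D\mathcal{M}$.

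Note, however, that the paper does not actually supply a proof of this lemma; it is simply quoted from \cite[Theorem 3.1.5, item c)]{herve2011analyticity} and used as a black box (in Step~\encircled{\sf C} of the proof of \cref{thm:holomorphy_sing_kernel}). So there is no ``paper's own proof'' to compare against---your write-up fills in what the paper deliberately outsources to the reference.
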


\begin{proposition}[{\cite[Proposition 4.2]{AJS19},\cite[Proposition 4.20]{HS21}}]\label{prop:hol_maps_banach_spaces}
Let $X$, $Y$ be complex Banach spaces and denote by $\mathscr{L}_{\normalfont\text{iso}}(X,Y)$
the (open) subset of operators with bounded inverse in $\mathscr{L}(X,Y)$.
\begin{enumerate}
	\item\label{prop:hol_maps_banach_spaces1}  Let $\mathsf{M}\in \mathscr{L}_{\normalfont\text{iso}}(X,Y)$. Then
	with
	\begin{align}\label{eq:set_CM}
		\mathcal{C}_\mathsf{M}
		\coloneqq
		\left \{
			\mathsf{T} \in \mathscr{L}(X,Y): \; 
			\norm{\mathsf{M}-\mathsf{T}}_{\mathscr{L}(X,Y)}
			<
			\norm{\mathsf{M}^{-1}}^{-1}_{\mathscr{L}(Y,X)}
		\right \}
	\end{align}
	$\mathcal{C}_\mathsf{M} \subset \mathscr{L}_{\normalfont\text{iso}}(X,Y)$
	and for all $\mathsf{T} \in \mathcal{C}_\mathsf{M}$ it holds
	\begin{align}
		\norm{\mathsf{T}^{-1}}_{\mathscr{L}(Y,X)}
		\leq
		\frac{
			\norm{\mathsf{M}^{-1}}_{\mathscr{L}(Y,X)}
		}{
		 	1-\norm{\mathsf{M}-\mathsf{T}}_{\mathscr{L}(X,Y)}
			\norm{\mathsf{M}^{-1}}_{\mathscr{L}(Y,X)}
		}.
	\end{align}
	\item\label{prop:hol_maps_banach_spaces2}
	The {\normalfont inversion} map
	\begin{align}\label{eq:def_inversion_map}
		\text{\normalfont{inv}}: \;
		\mathscr{L}_{\text{\normalfont{iso}}}(X,Y)
		\mapsto
		\mathscr{L}_{\text{\normalfont{iso}}}(Y,X):
		\mathsf{M}
		\mapsto 
		\mathsf{M}^{-1}
	\end{align}
	is holomorphic.
	\item\label{prop:hol_maps_banach_spaces3}
	The {\normalfont application} map
	\begin{align}\label{eq:def_application_map}
		\text{\normalfont{app}}: \;
		\left(
			\mathscr{L}(X,Y)
			,
			X
		\right)
		\rightarrow 
		Y: \; 
		(\mathsf{M},\mathsf{g})
		\mapsto 
		\mathsf{M}\, \mathsf{g}
	\end{align}
is holomorphic.
\end{enumerate}
\end{proposition}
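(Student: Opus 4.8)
The plan is to prove the three items in sequence, since each rests on the previous one, and to reduce every holomorphy assertion to \emph{complex} Fr\'echet differentiability (\cref{def:complex_diff}) via \cref{thm:holomorphic}. No deep machinery is needed: item~\ref{prop:hol_maps_banach_spaces1} is the Neumann series lemma, item~\ref{prop:hol_maps_banach_spaces2} follows from the resolvent identity together with item~\ref{prop:hol_maps_banach_spaces1}, and item~\ref{prop:hol_maps_banach_spaces3} from the boundedness and bilinearity of the application map.

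For item~\ref{prop:hol_maps_banach_spaces1} I would argue as follows. Given $\mathsf{T}\in\mathcal{C}_\mathsf{M}$, factor $\mathsf{T}=\mathsf{M}\bigl(\Id-\mathsf{M}^{-1}(\mathsf{M}-\mathsf{T})\bigr)$ and set $q\coloneqq\norm{\mathsf{M}^{-1}(\mathsf{M}-\mathsf{T})}_{\mathscr{L}(X,X)}$, so that $q\le\norm{\mathsf{M}^{-1}}_{\mathscr{L}(Y,X)}\norm{\mathsf{M}-\mathsf{T}}_{\mathscr{L}(X,Y)}<1$ by the defining inequality of $\mathcal{C}_\mathsf{M}$. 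The Neumann series $\sum_{k\geq0}\bigl(\mathsf{M}^{-1}(\mathsf{M}-\mathsf{T})\bigr)^k$ then converges absolutely in the Banach space $\mathscr{L}(X,X)$ to a bounded inverse of $\Id-\mathsf{M}^{-1}(\mathsf{M}-\mathsf{T})$ of norm at most $(1-q)^{-1}$; hence $\mathsf{T}^{-1}=\bigl(\Id-\mathsf{M}^{-1}(\mathsf{M}-\mathsf{T})\bigr)^{-1}\mathsf{M}^{-1}\in\mathscr{L}(Y,X)$, which shows $\mathcal{C}_\mathsf{M}\subset\mathscr{L}_{\text{iso}}(X,Y)$, and submultiplicativity of the operator norm combined with the bound on $q$ yields the stated estimate on $\norm{\mathsf{T}^{-1}}_{\mathscr{L}(Y,X)}$.

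For item~\ref{prop:hol_maps_banach_spaces2}, note first that item~\ref{prop:hol_maps_banach_spaces1} already shows $\mathscr{L}_{\text{iso}}(X,Y)$ is open and that $\text{\normalfont inv}$ is \emph{locally bounded}: on the half-ball $\{\,\norm{\mathsf{M}-\mathsf{T}}_{\mathscr{L}(X,Y)}<\tfrac12\norm{\mathsf{M}^{-1}}_{\mathscr{L}(Y,X)}^{-1}\,\}$ one has $\norm{\mathsf{T}^{-1}}_{\mathscr{L}(Y,X)}\leq 2\norm{\mathsf{M}^{-1}}_{\mathscr{L}(Y,X)}=:C_\mathsf{M}$. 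I would take as candidate Fr\'echet derivative of $\text{\normalfont inv}$ at $\mathsf{M}$ the map $\mathsf{H}\mapsto-\mathsf{M}^{-1}\mathsf{H}\mathsf{M}^{-1}$, which lies in $\mathscr{L}\bigl(\mathscr{L}(X,Y),\mathscr{L}(Y,X)\bigr)$. For $\mathsf{H}$ small enough that $\mathsf{M}+\mathsf{H}$ lies in the above half-ball, the resolvent identity $(\mathsf{M}+\mathsf{H})^{-1}-\mathsf{M}^{-1}=-(\mathsf{M}+\mathsf{H})^{-1}\mathsf{H}\mathsf{M}^{-1}$ gives
\begin{align*}
(\mathsf{M}+\mathsf{H})^{-1}-\mathsf{M}^{-1}+\mathsf{M}^{-1}\mathsf{H}\mathsf{M}^{-1}
=\bigl(\mathsf{M}^{-1}-(\mathsf{M}+\mathsf{H})^{-1}\bigr)\mathsf{H}\mathsf{M}^{-1},
\end{align*}
and, bounding $\norm{\mathsf{M}^{-1}-(\mathsf{M}+\mathsf{H})^{-1}}_{\mathscr{L}(Y,X)}$ by $C_\mathsf{M}\norm{\mathsf{H}}_{\mathscr{L}(X,Y)}\norm{\mathsf{M}^{-1}}_{\mathscr{L}(Y,X)}$ via the resolvent identity once more, the right-hand side has norm $O\bigl(\norm{\mathsf{H}}_{\mathscr{L}(X,Y)}^2\bigr)=o\bigl(\norm{\mathsf{H}}_{\mathscr{L}(X,Y)}\bigr)$. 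Thus $\text{\normalfont inv}$ is complex Fr\'echet differentiable, hence holomorphic by \cref{thm:holomorphic}.

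Finally, for item~\ref{prop:hol_maps_banach_spaces3}, observe that $\mathscr{L}(X,Y)\times X$ is a complex Banach space (say with the sum norm) and that $\text{\normalfont app}$ is bilinear and bounded since $\norm{\mathsf{M}\,\mathsf{g}}_Y\leq\norm{\mathsf{M}}_{\mathscr{L}(X,Y)}\norm{\mathsf{g}}_X$. Expanding $(\mathsf{M}+\mathsf{H})(\mathsf{g}+\mathsf{h})$ shows that the Fr\'echet derivative at $(\mathsf{M},\mathsf{g})$ is $(\mathsf{H},\mathsf{h})\mapsto\mathsf{M}\,\mathsf{h}+\mathsf{H}\,\mathsf{g}$, a bounded linear map, with remainder $\mathsf{H}\,\mathsf{h}$ of norm at most $\norm{\mathsf{H}}_{\mathscr{L}(X,Y)}\norm{\mathsf{h}}_X=o\bigl(\norm{(\mathsf{H},\mathsf{h})}\bigr)$; \cref{thm:holomorphic} again gives holomorphy. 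The only point requiring any care is the Neumann series estimate of item~\ref{prop:hol_maps_banach_spaces1}, which is precisely what supplies the openness of $\mathscr{L}_{\text{iso}}(X,Y)$ and the local boundedness of $\text{\normalfont inv}$ needed to control the remainder in item~\ref{prop:hol_maps_banach_spaces2}; once it is in place the remaining steps are immediate.
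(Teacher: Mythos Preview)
Your argument is correct and entirely standard: the Neumann series for item~\ref{prop:hol_maps_banach_spaces1}, the resolvent identity plus the local bound from item~\ref{prop:hol_maps_banach_spaces1} for item~\ref{prop:hol_maps_banach_spaces2}, and bilinearity for item~\ref{prop:hol_maps_banach_spaces3} are exactly the expected ingredients, and your remainder estimates are clean. Note, however, that the paper does not supply its own proof of this proposition; it is quoted from \cite[Proposition 4.20]{HS21} and used as a black box, so there is no in-paper argument to compare against.
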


\subsection{Parametric Holomorphy}
We proceed to define precisely the notion of parametric holomorphy 
to be used throughout this work. To this end, for $s>1$ we define 
the so-called Bernstein ellipse as
\begin{align}
	\mathcal{E}_s
	\coloneqq 
	\left\{
		\frac{z+z^{-1}}{2}: \; 1\leq \snorm{z}\leq s
	\right \} 
	\subset \IC.
\end{align}
In other words, this domain corresponds to the part of the complex domain which is
enclosed by the ellipse with foci at $z=\pm 1$ and semi-axes of length 
$a\coloneqq  (s+s^{-1})/2$ and $b \coloneqq  (s-s^{-1})/2$.
In addition, we define the tensorized poly-ellipse
\begin{align}
	\mathcal{E}_{\boldsymbol{\rho}} 
	\coloneqq  
	\bigotimes_{j\geq1} 
	\mathcal{E}_{\rho_j} \subset \IC^{\mathbb{N}},
\end{align}
where $\boldsymbol\rho \coloneqq  (\rho_j)_{j\geq1}$
is such that $\rho_j>1$, for $j\in \mathbb{N}$.
We adopt the convention $\mathcal{E}_1\coloneqq [-1,1]$
to include the limit case $\rho_j=1$.

The next definition, originally introduced in \cite{CCS15}, 
constitutes the precise notion of parametric holomorphy to be used throughout
this work.

\begin{definition}[{\cite[Definition 2.1]{CCS15}}]\label{def:bpe_holomorphy}
Let $X$ be a complex Banach space equipped with the norm $\norm{\cdot}_{X}$. 
For $\varepsilon>0$ and $p\in(0,1)$, we say that the map 
\begin{equation}
	\mathbb{U} 
	\ni 
	\y
	\mapsto 
	u_\y 
	\in 
	X
\end{equation}
is $(\boldsymbol{b},p,\varepsilon)$-holomorphic if and only if
\begin{enumerate}
	\item\label{def:bpe_holomorphy1}
	The map $\mathbb{U} \ni {\y} \mapsto u_\y \in X$ is uniformly bounded, i.e.~
	\begin{align}
		\sup_{\y\in \mathbb{U}}
		\norm{u_\y}_{X}
		\leq
		C_0,
	\end{align}
	for some finite constant $C_0>0$.
	\item\label{def:bpe_holomorphy2}
	There exists a positive sequence $\boldsymbol{b}\coloneqq (b_j)_{j\geq 1} \in \ell^p(\mathbb{N})$ 
	and a constant $C_\varepsilon>0$ such that for any sequence 
	$\boldsymbol\rho\coloneqq (\rho_j)_{j\geq1}$ 
	of numbers strictly larger than one that is $(\boldsymbol{b},\varepsilon)$-admissible,
	i.e.~satisfying
	\begin{align}
	\label{eq:admissible_polyradius}	
		\sum_{j\geq 1}(\rho_j-1) b_j 
		\leq 
		\varepsilon,
	\end{align}
	the map $\y \mapsto u_{\y}$ admits a complex
	extension $\z \mapsto u_{\z}$ 
	that is holomorphic with respect to each
	variable $z_j$ on a set of the form 
	\begin{align}
		\mathcal{O}_{\boldsymbol\rho} 
		\coloneqq  
		\displaystyle{\bigotimes_{j\geq 1}} \, \mathcal{O}_{\rho_j},
	\end{align}
	where
    \begin{equation}
    \mathcal{O}_{\rho_j}
	=
	\{z\in\IC\colon\operatorname{dist}(z,[-1,1])<\rho_j-1\}.
    \end{equation}
	\item\label{def:bpe_holomorphy3}
	This extension is bounded on $\mathcal{E}_{\boldsymbol\rho}$ according to
	\begin{align}
	\label{eq:bpe_hol_bound_epsilon}
		\sup_{\z\in \mathcal{E}_{\boldsymbol{\rho}}} \norm{u_\z}_{X}  
		\leq 
		C_\varepsilon.
	\end{align}
\end{enumerate}
\end{definition}

According to \cite[Lemma 4.4]{CCS15}, for any $s>1$,
it holds $\mathcal{E}_s \subset \mathcal{O}_s$.
This fact will be used extensively throughout this work.

\begin{remark}
Let $\boldsymbol\rho\coloneqq (\rho_j)_{j\geq1}$ 
be any sequence of numbers strictly larger than
one that is $(\boldsymbol{b},\varepsilon)$-admissible.
In view of \cref{def:complex_diff} and \cref{thm:holomorphic},
\cref{def:bpe_holomorphy2}
in \cref{def:bpe_holomorphy} is equivalent to stating that for
each $j\in \IN$ and $\z \in \mathcal{O}_{\boldsymbol\rho}$
there exists $\partial_{z_j} u_{\z} \in X$, referred to as the
derivative in the direction $z_j$, such that
\begin{equation}
	\norm{
		u_{\z + h \boldsymbol{e}_j}
		-
		u_{\z}
		-
		\partial_{z_j}
		u_{\z}
		h
	}_{
		X
	}
	=
	o
	\left(
		\snorm{h}
	\right),
	\quad
	\text{as }
	\snorm{h}
	\rightarrow 
	0^+,
	\;
	h\in \IC,
\end{equation}
where $\boldsymbol{e}_j$ signifies the
$j$-th unit vector.
\end{remark}

%
%
%
\subsection{Affine-Parametric Boundary Transformations}
\label{sec:affine_parametric}
Let $\hat{\Gamma}\subset\IR^3$, in the following referred to 
as the reference boundary, be a bounded
Lipschitz boundary. To define parametrized families of bounded Lipschitz boundaries
in $\IR^3$, we consider affine-parametric vector fields of
Karhunen-Lo\`eve type. 
To this end, we set $\br_\y\colon\hat{\Gamma}\rightarrow\IR^3$,
depending on the parametric input $\y\in\mathbb{U}$, as
\begin{equation}\label{eq:affine_parametric_representation}
	\br_\y(\bxref)
	=
	\bvarphi_0(\bxref)
	+
	\sum_{j\geq 1}
	y_j
	\bvarphi_j(\bxref),
	\quad
	\bxref \in \hat\Gamma,
	\quad
	\y
	=
	(y_j)_{j\geq1}
	\in 
	\mathbb{U},
\end{equation}
with $\bvarphi_j\colon\hat{\Gamma}\to\IR^3$ for $j\in \IN$.
This gives rise to a collection of parametric boundaries
$(\Gamma_\y)_{\y \in \mathbb{U}}$ of the form 
\begin{equation}
	\Gamma_{\y} 
	\coloneqq 
	\{
		\bx\in \IR^3:\; 
		\bx
		= 
		\br_\y(\bxref),
		\quad
		\bxref \in \hat\Gamma
	\}.
\end{equation}
This family of affinely parametrized boundaries have
recently been investigated in the context of forward and inverse uncertainty quantification.
We refer to \cite{DHJM2022} for a detailed discussion on their construction
using non-uniform rational B-splines (NURBS).

We make the following assumptions on $\br_\y$:

\begin{assumption}\label{assump:parametric_boundary}
Let $\hat\Gamma$ be the reference Lipschitz boundary.
\begin{enumerate}
	\item\label{assump:parametric_boundary2}
	The functions $(\bvarphi_i)_{i\in \IN} \subset \mathscr{C}^{0,1}(\hat\Gamma; \IR^3)$
	are such that for each $\y\in\mathbb{U}$ one has that
	$\br_\y\colon\hat{\Gamma}\to\Gamma_\y$ is bijective and bi-Lipschitz,
	and $\Gamma_\y$ is the boundary of a bounded Lipschitz domain.
	\item\label{assump:parametric_boundary3}
	There exists $p\in(0,1)$ such that 
	\begin{equation}
		\boldsymbol{b}
		\coloneqq
		\left(
			\norm{\bvarphi_j}_{\mathscr{C}^{0,1}(\hat{\Gamma},\IR^3)} 
		\right)_{j\in \IN}
		\in 
		\ell^p(\IN).
	\end{equation}
\end{enumerate}
\end{assumption}

\begin{remark}\label{rem:parametric_boundary_implications}
\Cref{assump:parametric_boundary3} of \cref{assump:parametric_boundary}
yields absolute uniform convergence of the series
\cref{eq:affine_parametric_representation}
	as an element of $\mathscr{C}^{0,1}(\hat{\Gamma},\IR^3)$.
\end{remark}
\begin{example}[Example of affine-parametric transformations]
A common approach in the uncertainty quantification of partial differential equations on random domains is to choose $\boldsymbol{\varphi}_j=\sqrt{\lambda_j}\boldsymbol{\psi}_j$, where $(\lambda_j,\boldsymbol{\psi}_j)$, $j\geq 1$, are the eigenfunctions of a covariance integral operator with covariance kernel $C\colon\widehat{\Gamma}\times\widehat{\Gamma}\to\mathbb{R}^{3\times 3}$. Such a kernel can, for example be obtained by taking the traces of matrix-valued covariance kernels in the whole of $\mathbb{R}^3$, see, e.g., \cite{DHJM2022} for details. An illustration of possible domain transformations where the covariance function was chosen to be a diagonal Gaussian kernel is shown in \cref{fig:domain deformations}.
\begin{figure}
    \centering
    \begin{minipage}{0.3\textwidth}
    \centering
    \includegraphics[width=0.9\textwidth,clip=true,trim=400 150 300 250]{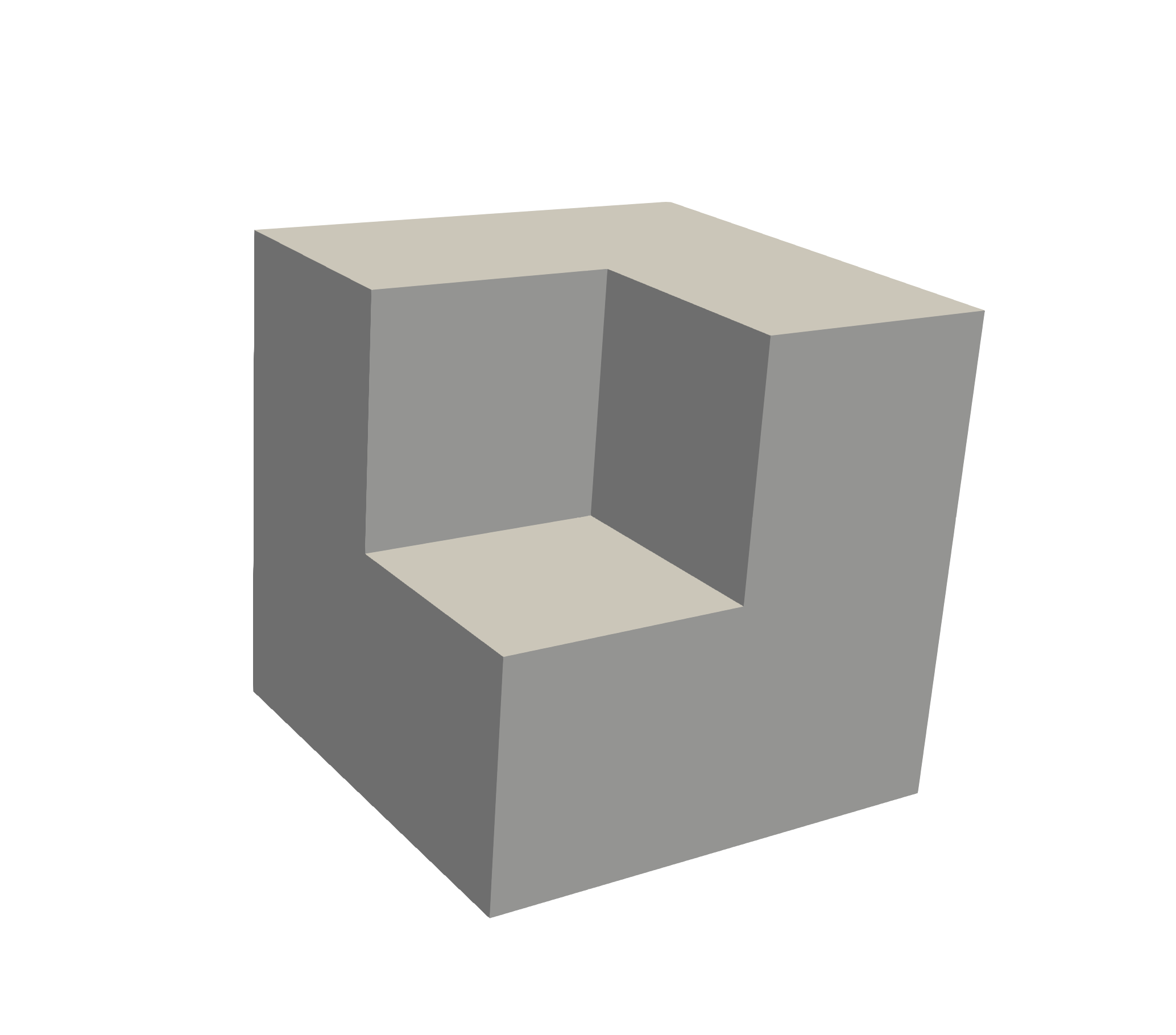}\\
    Reference boundary $\hat{\Gamma}.$
    \end{minipage}
    \hfill
    \begin{minipage}{0.6\textwidth}
    \centering
    \includegraphics[width=0.45\textwidth,clip=true,trim=400 150 300 250]{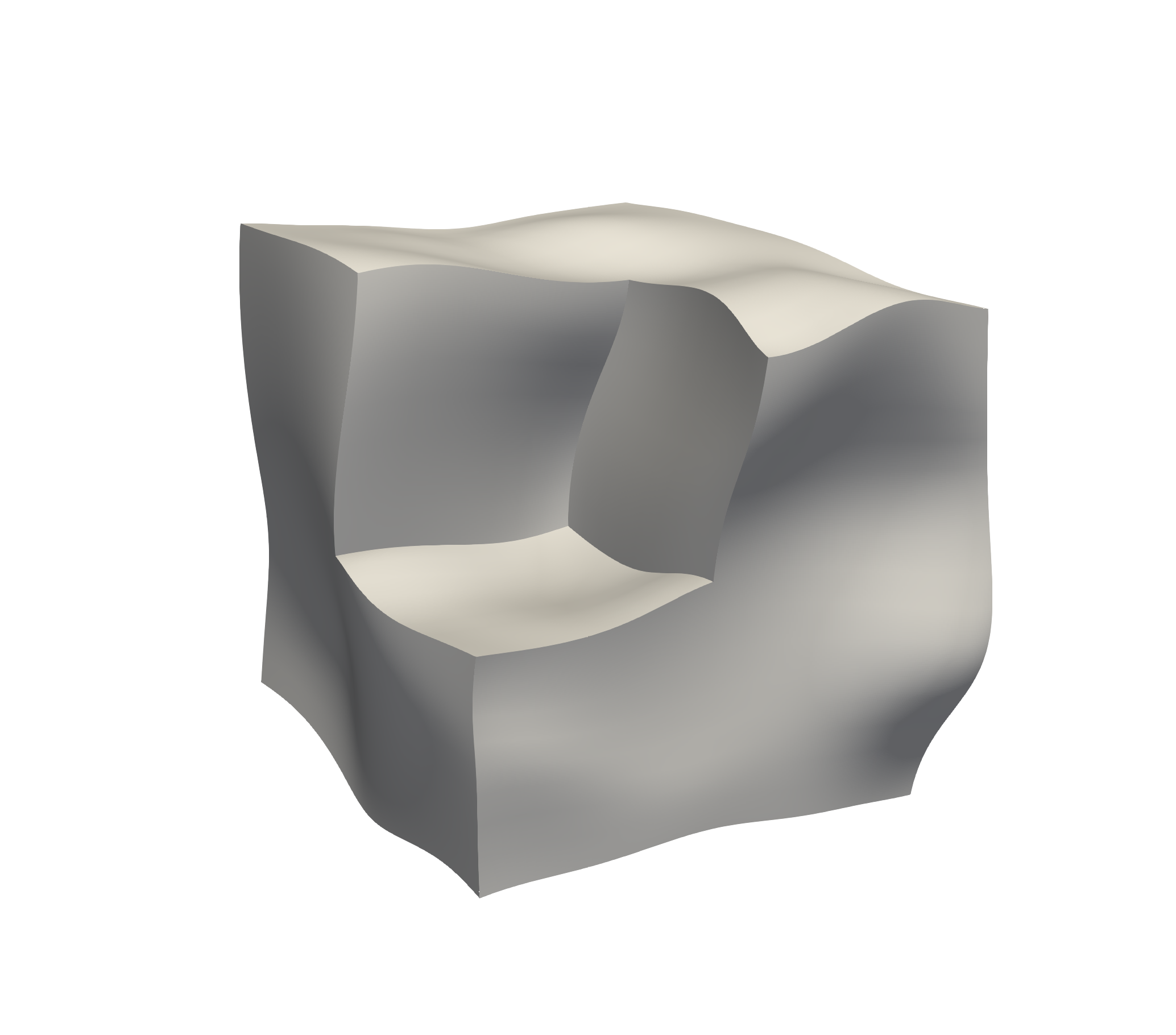}\quad
    \includegraphics[width=0.45\textwidth,clip=true,trim=400 150 300 250]{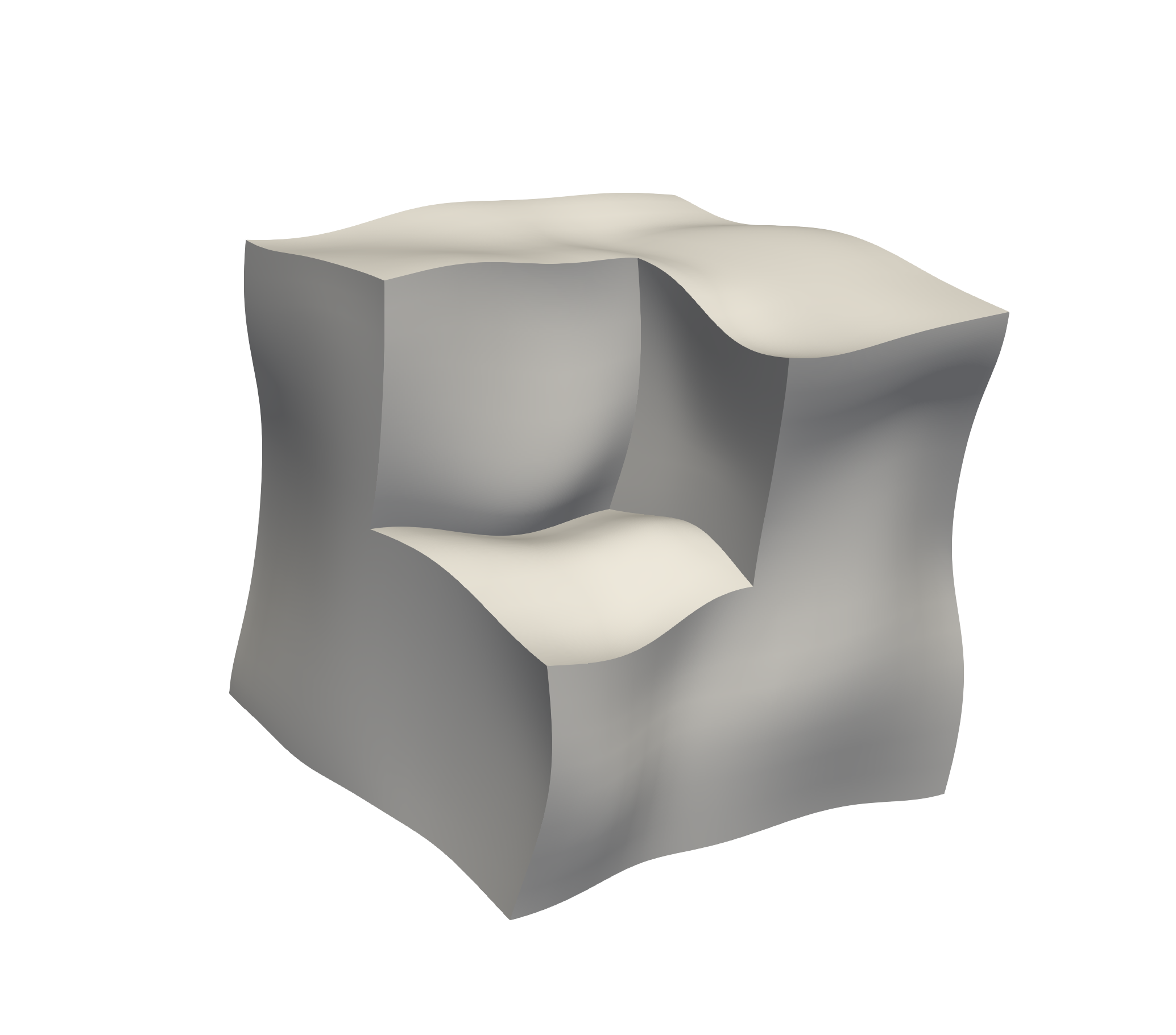}\\
    \includegraphics[width=0.45\textwidth,clip=true,trim=400 150 300 250]{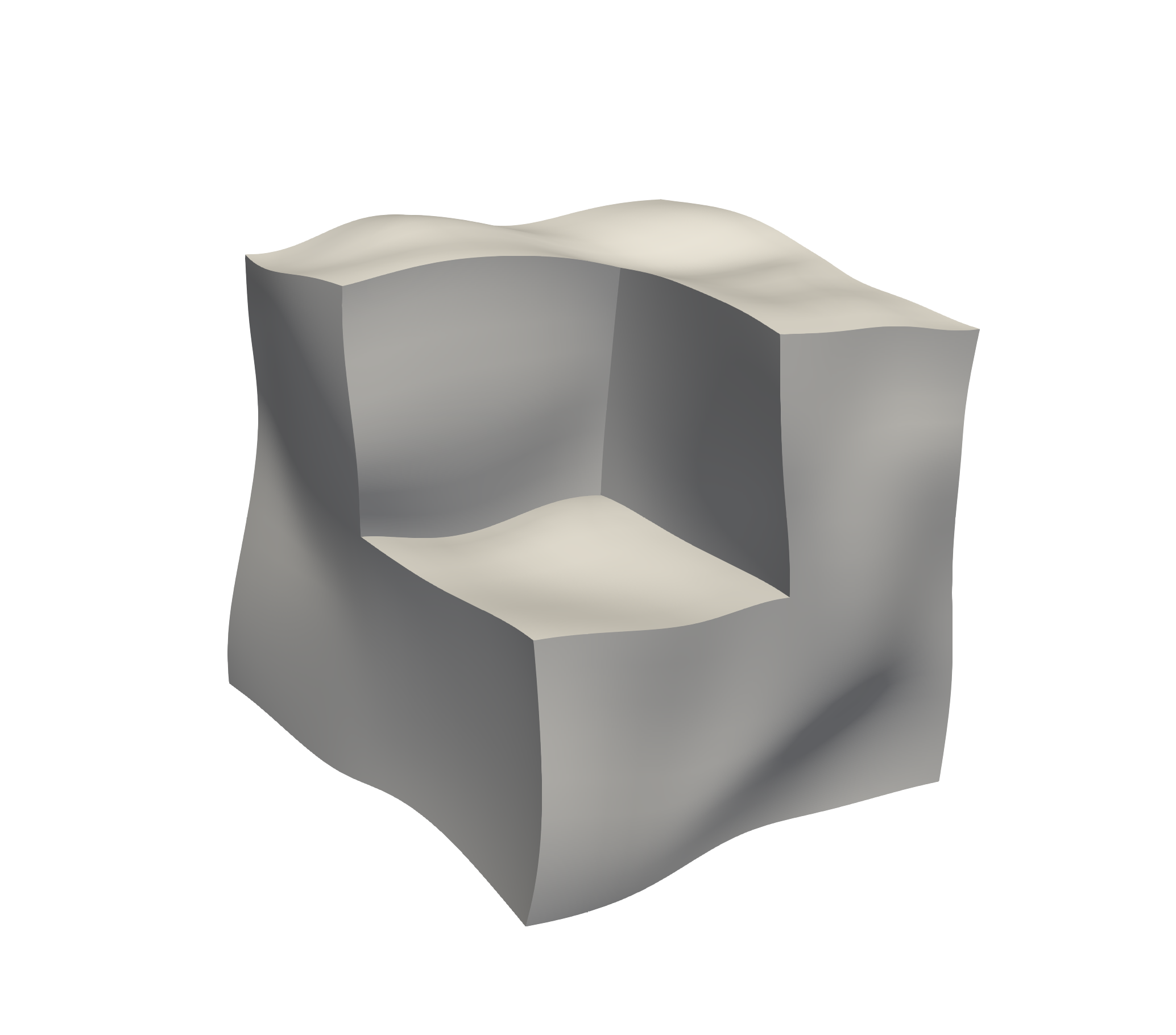}\quad
    \includegraphics[width=0.45\textwidth,clip=true,trim=400 150 300 250]{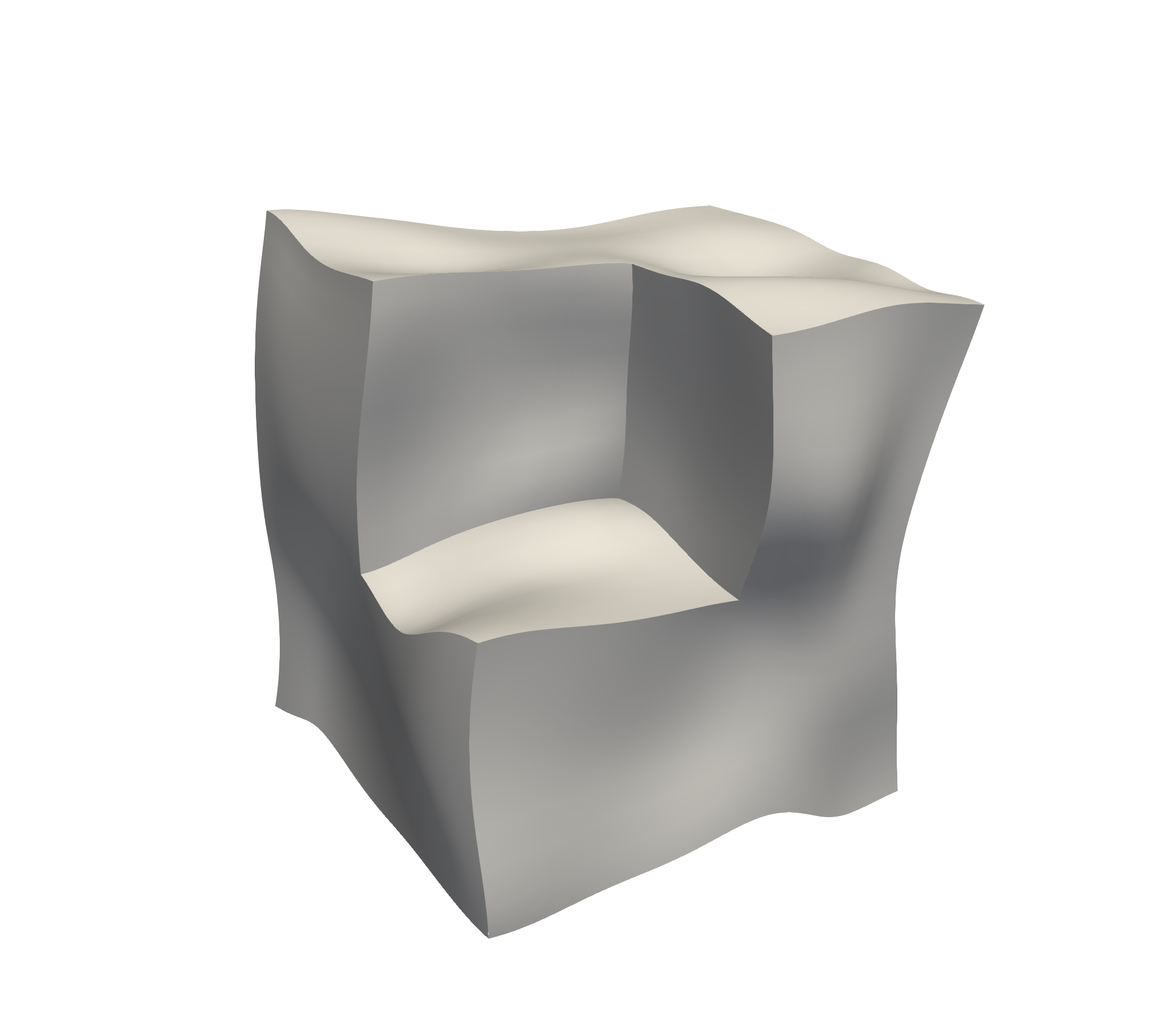}\\
    Examples of affine parametrically deformed boundary $\Gamma_\y.$
    \end{minipage}
    \caption{Illustration of a reference boundary $\widehat\Gamma$ and parametric boundaries $\Gamma_\y$.}
    \label{fig:domain deformations}
\end{figure}
\end{example}

We are interested in extending the set of
affine-parametric boundary parametrizations 
to a complex set of the form $\mathcal{O}_{\boldsymbol\rho}$,
and establishing its holomorphic dependence upon
the parametric variables, in the sense of 
\cref{def:bpe_holomorphy}. 

Let us set
\begin{align}
	\left(
		\hat{\Gamma}
		\times
		\hat{\Gamma}
	\right)^\star
	\coloneqq
	\left\{
		\hat{\bx},
		\hat{\by}
		\in 
		\hat{\Gamma}
		:
		\hat{\bx}
		\neq
		\hat{\by}
	\right\}.
\end{align}

\begin{lemma}\label{lmm:lipschitz_preservation}
Let \cref{assump:parametric_boundary} be satisfied
with $\boldsymbol{b} \in \ell^{p}(\IN)$ and $p\in (0,1)$.
There exists $\eta>0$ and $\varepsilon>0$
such that for any $(\boldsymbol{b},\varepsilon)$--admisible sequence
$\boldsymbol\rho =  (\rho_j)_{j\geq1}$ of numbers strictly
larger than one it holds
\begin{equation}
	\inf_{\z \in \mathcal{O}_{{\boldsymbol{\rho}}}}
	\inf_{(\hat\bx \times \hat\by) \in (\hat\Gamma \times \hat\Gamma)^\star}
	\Re
	\left\{
	\frac{
		\left(
			\boldsymbol{r}_{\z}(\hat\bx)
			-
			\boldsymbol{r}_{\z}(\hat\by)
		\right)
		\cdot
		\left(
			\boldsymbol{r}_{\z}(\hat\bx)
			-
			\boldsymbol{r}_{\z}(\hat\by)
		\right)
	}{
	\norm{
		\hat\bx
		-
		\hat\by
	}^2	
	}
	\right\}
	\geq
	\eta
	>0,
\end{equation}
\end{lemma}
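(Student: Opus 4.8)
The plan is to reduce the claim to a statement about a single scalar quantity and then estimate it by separating the real (i.e., $\y$-part) contribution from the small imaginary perturbation. First, I would write $\z = \y + \mathrm{i}\boldsymbol{w}$ with $\y \in \mathbb{U}$ and $\abs{w_j} \leq \rho_j - 1$ for each $j$, so that $\boldsymbol{r}_{\z}(\hat\bx) = \boldsymbol{r}_{\y}(\hat\bx) + \mathrm{i}\, \boldsymbol{\psi}(\hat\bx)$ where $\boldsymbol{\psi}(\hat\bx) \isdef \sum_{j\geq 1} w_j \bvarphi_j(\hat\bx)$. Setting $\boldsymbol{d}_{\y} \isdef \boldsymbol{r}_{\y}(\hat\bx) - \boldsymbol{r}_{\y}(\hat\by)$ and $\boldsymbol{\delta} \isdef \boldsymbol{\psi}(\hat\bx) - \boldsymbol{\psi}(\hat\by)$, the numerator expands (using the bilinear, not sesquilinear, product) as
\begin{equation}
	\left(\boldsymbol{r}_{\z}(\hat\bx) - \boldsymbol{r}_{\z}(\hat\by)\right)\cdot\left(\boldsymbol{r}_{\z}(\hat\bx) - \boldsymbol{r}_{\z}(\hat\by)\right)
	=
	\boldsymbol{d}_{\y}\cdot\boldsymbol{d}_{\y} - \boldsymbol{\delta}\cdot\boldsymbol{\delta} + 2\mathrm{i}\,\boldsymbol{d}_{\y}\cdot\boldsymbol{\delta},
\end{equation}
so its real part is $\norm{\boldsymbol{d}_{\y}}^2 - \norm{\boldsymbol{\delta}}^2$.

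Next I would bound the two pieces. For the first, \cref{assump:parametric_boundary2} states that $\boldsymbol{r}_{\y}\colon\hat\Gamma\to\Gamma_{\y}$ is bi-Lipschitz for every $\y\in\mathbb{U}$; I would need the lower bi-Lipschitz constant to be bounded below \emph{uniformly} in $\y$, i.e.\ there is $c_0>0$ with $\norm{\boldsymbol{d}_{\y}} \geq c_0 \norm{\hat\bx - \hat\by}$ for all $\y\in\mathbb{U}$ and all $\hat\bx\neq\hat\by$. This uniformity should follow from compactness of $\mathbb{U}$ in the product topology together with continuity of $\y\mapsto\boldsymbol{r}_{\y}$ into $\mathscr{C}^{0,1}(\hat\Gamma;\IR^3)$ (guaranteed by \cref{assump:parametric_boundary3} and \cref{rem:parametric_boundary_implications}); alternatively one simply adds it as a (standard) hypothesis. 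For the second piece, since $\bvarphi_j$ is Lipschitz, $\norm{\boldsymbol{\psi}(\hat\bx) - \boldsymbol{\psi}(\hat\by)} \leq \sum_{j\geq1}\abs{w_j}\,\norm{\bvarphi_j}_{\mathscr{C}^{0,1}}\,\norm{\hat\bx-\hat\by} \leq \left(\sum_{j\geq1}(\rho_j-1)b_j\right)\norm{\hat\bx-\hat\by} \leq \varepsilon\,\norm{\hat\bx-\hat\by}$ by $(\boldsymbol{b},\varepsilon)$-admissibility. Dividing by $\norm{\hat\bx-\hat\by}^2$ then yields
\begin{equation}
	\Re\left\{\frac{\left(\boldsymbol{r}_{\z}(\hat\bx) - \boldsymbol{r}_{\z}(\hat\by)\right)\cdot\left(\boldsymbol{r}_{\z}(\hat\bx) - \boldsymbol{r}_{\z}(\hat\by)\right)}{\norm{\hat\bx-\hat\by}^2}\right\}
	\geq
	c_0^2 - \varepsilon^2,
\end{equation}
uniformly in $\z\in\mathcal{O}_{\boldsymbol\rho}$ and in $(\hat\bx\times\hat\by)\in(\hat\Gamma\times\hat\Gamma)^\star$.

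Finally, I would fix $\varepsilon>0$ small enough that $\varepsilon < c_0$ — note $\varepsilon$ is allowed to depend on $\boldsymbol{b}$ and on the geometry, exactly as in the statement — and set $\eta \isdef c_0^2 - \varepsilon^2 > 0$. Taking the infimum over $\z$ and over $(\hat\bx\times\hat\by)$ gives the asserted bound. I expect the only genuine obstacle to be the uniform-in-$\y$ lower bi-Lipschitz bound $\norm{\boldsymbol{d}_{\y}}\geq c_0\norm{\hat\bx-\hat\by}$: one must argue that the pointwise bi-Lipschitz property in \cref{assump:parametric_boundary2} upgrades to a uniform one. The cleanest route is a compactness argument — the map $\mathbb{U}\ni\y\mapsto\inf_{\hat\bx\neq\hat\by}\norm{\boldsymbol{d}_{\y}}/\norm{\hat\bx-\hat\by}$ is (lower semi-)continuous and strictly positive on the compact set $\mathbb{U}$, hence bounded below by some $c_0>0$ — and everything else is the elementary two-line estimate above.
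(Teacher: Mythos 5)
Your overall strategy (perturb off the real parameter set, bound the perturbation by $(\boldsymbol{b},\varepsilon)$-admissibility, get the uniform lower bi-Lipschitz constant by compactness of $\mathbb{U}$ and continuity of $\y\mapsto\br_\y$ in $\mathscr{C}^{0,1}$) is the right one, and it is in the spirit of the proof the paper defers to (\cite[Proposition 3.2]{henriquez2021shape}); the compactness argument for a uniform $c_0$ is fine as you sketch it. However, there is a genuine gap at the very first step: you cannot write every $\z\in\mathcal{O}_{\boldsymbol{\rho}}$ as $\z=\y+\imath\boldsymbol{w}$ with $\y\in\mathbb{U}$ and \emph{real} $w_j$, $\snorm{w_j}\leq\rho_j-1$. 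The set $\mathcal{O}_{\rho_j}$ is the full $(\rho_j-1)$-neighbourhood of the segment $[-1,1]$, so it contains, e.g., the real point $z_j=1+(\rho_j-1)$ and points whose real part exceeds $1$; these lie outside the rectangle $[-1,1]+\imath[-(\rho_j-1),\rho_j-1]$ that your decomposition covers. This matters because the subsequent identity
\begin{equation}
	\Re\bigl\{(\boldsymbol{d}_\y+\imath\boldsymbol{\delta})\cdot(\boldsymbol{d}_\y+\imath\boldsymbol{\delta})\bigr\}
	=
	\norm{\boldsymbol{d}_\y}^2-\norm{\boldsymbol{\delta}}^2
\end{equation}
uses that the cross term $2\imath\,\boldsymbol{d}_\y\cdot\boldsymbol{\delta}$ is purely imaginary, which requires $\boldsymbol{\delta}$ to be real. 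In general one only has $z_j=y_j+w_j$ with $y_j\in[-1,1]$ and $w_j\in\IC$, $\snorm{w_j}\leq\rho_j-1$, so $\boldsymbol{\delta}=\sum_{j\geq1}w_j\bigl(\bvarphi_j(\hat\bx)-\bvarphi_j(\hat\by)\bigr)$ is complex and the cross term contributes to the real part. The points you miss are not exotic: they are exactly the ones needed to cover the Bernstein ellipses $\mathcal{E}_{\rho_j}\subset\mathcal{O}_{\rho_j}$ along the real axis, which is the whole purpose of the lemma downstream.

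The gap is fixable without changing the architecture of your argument. With the correct (complex) decomposition, estimate
\begin{equation}
	\Re\bigl\{(\boldsymbol{d}_\y+\boldsymbol{\delta})\cdot(\boldsymbol{d}_\y+\boldsymbol{\delta})\bigr\}
	\geq
	\norm{\boldsymbol{d}_\y}^2-2\norm{\boldsymbol{d}_\y}\,\norm{\boldsymbol{\delta}}-\norm{\boldsymbol{\delta}}^2,
\end{equation}
where $\norm{\boldsymbol{\delta}}\leq\varepsilon\norm{\hat\bx-\hat\by}$ exactly as in your computation (this bound is insensitive to $w_j$ being complex). You then need, in addition to the uniform lower bound $\norm{\boldsymbol{d}_\y}\geq c_0\norm{\hat\bx-\hat\by}$, a uniform upper Lipschitz bound $\norm{\boldsymbol{d}_\y}\leq L\norm{\hat\bx-\hat\by}$ with $L\coloneqq\norm{\bvarphi_0}_{\mathscr{C}^{0,1}(\hat\Gamma;\IR^3)}+\sum_{j\geq1}b_j<\infty$, which is immediate from \cref{assump:parametric_boundary}. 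This yields the uniform lower bound $c_0^2-2L\varepsilon-\varepsilon^2$ instead of your $c_0^2-\varepsilon^2$, and choosing $\varepsilon>0$ small enough (e.g.\ $\varepsilon<c_0^2/(2L+c_0)$, say) gives $\eta>0$ as claimed. With that correction your proof is complete.
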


\begin{proof}
The proof of this result follows the exact same steps as in
the proof of \cite[Proposition 3.2]{henriquez2021shape}.
For the sake of brevity, we skip it.
\end{proof}

\begin{lemma}\label{lmm:holomorphic_parametric_rep}
Let \cref{assump:parametric_boundary} be satisfied. 
Then, there exists $\varepsilon>0$ such that the map 
$\mathbb{U}\ni\y \mapsto \boldsymbol{r}_\y\in \mathscr{C}^{0,1}(\hat\Gamma,\IR^3)$
defined in \cref{eq:affine_parametric_representation}
is $(\boldsymbol{b},p,\varepsilon)$-holomorphic and continuous.
\end{lemma}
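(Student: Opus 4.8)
The plan is to verify the three conditions of \cref{def:bpe_holomorphy} for the map $\y\mapsto\boldsymbol{r}_\y$, with values in the complex Banach space $\mathscr{C}^{0,1}(\hat\Gamma,\IC^3)$ (the complexification of $\mathscr{C}^{0,1}(\hat\Gamma,\IR^3)$). The sequence $\boldsymbol{b}$ is the one furnished by \cref{assump:parametric_boundary}\,\cref{assump:parametric_boundary3}, and $\varepsilon$ will be taken small enough that every $(\boldsymbol{b},\varepsilon)$-admissible $\boldsymbol\rho$ keeps us in a regime where the series converges nicely; any such $\varepsilon$ works, and if needed it can be shrunk further to match the one coming from \cref{lmm:lipschitz_preservation} (so that the resulting deformations actually parametrize boundaries, though that is not strictly needed for holomorphy of $\boldsymbol{r}$ alone).

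First I would write down the complex extension: for $\z=(z_j)_{j\geq1}\in\mathcal{O}_{\boldsymbol\rho}$ set $\boldsymbol{r}_\z(\bxref)=\bvarphi_0(\bxref)+\sum_{j\geq1}z_j\bvarphi_j(\bxref)$, and check it is well defined as an element of $\mathscr{C}^{0,1}(\hat\Gamma,\IC^3)$. For $\z\in\mathcal{O}_{\boldsymbol\rho}$ one has $\snorm{z_j}\leq\rho_j$, hence $\snorm{z_j}\leq1+(\rho_j-1)$, and the partial sums form a Cauchy sequence in $\mathscr{C}^{0,1}(\hat\Gamma,\IC^3)$ because
\begin{align}
\sum_{j\geq1}\snorm{z_j}\,\norm{\bvarphi_j}_{\mathscr{C}^{0,1}(\hat\Gamma,\IR^3)}
\leq
\sum_{j\geq1}b_j + \sum_{j\geq1}(\rho_j-1)b_j
\leq
\norm{\boldsymbol{b}}_{\ell^1} + \varepsilon < \infty,
\end{align}
using $\boldsymbol{b}\in\ell^p(\IN)\subset\ell^1(\IN)$ and \cref{eq:admissible_polyradius}. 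This same estimate, applied to $\sup_{\z\in\mathcal{O}_{\boldsymbol\rho}}$, immediately gives uniform boundedness on $\mathcal{O}_{\boldsymbol\rho}$, hence in particular on $\mathcal{E}_{\boldsymbol\rho}\subset\mathcal{O}_{\boldsymbol\rho}$, which is \cref{def:bpe_holomorphy}\,\cref{def:bpe_holomorphy3} with $C_\varepsilon=\norm{\boldsymbol{b}}_{\ell^1}+\varepsilon+\norm{\bvarphi_0}_{\mathscr{C}^{0,1}}$; evaluating at $\z=\y\in\mathbb{U}$ (where $\rho_j=1$) gives \cref{def:bpe_holomorphy}\,\cref{def:bpe_holomorphy1} with $C_0=\norm{\boldsymbol{b}}_{\ell^1}+\norm{\bvarphi_0}_{\mathscr{C}^{0,1}}$.

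Next I would establish holomorphy in each variable $z_j$ (condition \cref{def:bpe_holomorphy2}). Fix $j$ and $\z\in\mathcal{O}_{\boldsymbol\rho}$; the finite partial sums $\boldsymbol{r}^{(N)}_\z=\bvarphi_0+\sum_{j=1}^N z_j\bvarphi_j$ are affine, hence holomorphic, in $z_j$ with derivative $\bvarphi_j$ (a fixed element of $\mathscr{C}^{0,1}(\hat\Gamma,\IC^3)$, independent of $\z$), so that the difference quotient in direction $\boldsymbol{e}_j$ is \emph{exactly} $\bvarphi_j$ with no remainder term; thus each $\boldsymbol{r}^{(N)}$ is holomorphic on $\mathcal{O}_{\boldsymbol\rho}$ in the sense of the Remark following \cref{def:bpe_holomorphy}. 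Since $\boldsymbol{r}^{(N)}\to\boldsymbol{r}$ uniformly on $\mathcal{O}_{\boldsymbol\rho}$ by the tail estimate above, \cref{lmm:herve} yields that $\boldsymbol{r}_\z$ is holomorphic on $\mathcal{O}_{\boldsymbol\rho}$; in fact $\partial_{z_j}\boldsymbol{r}_\z=\bvarphi_j$. This completes the three conditions. For continuity of $\y\mapsto\boldsymbol{r}_\y$ on $\mathbb{U}$ I would note that $\norm{\boldsymbol{r}_\y-\boldsymbol{r}_{\y'}}_{\mathscr{C}^{0,1}}\leq\sum_{j\geq1}\snorm{y_j-y_j'}\,b_j$, which, by dominated convergence against the $\ell^1$ majorant $2\boldsymbol{b}$, tends to $0$ as $\y'\to\y$ in the product topology.

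The argument is essentially routine; the only point requiring care is the bookkeeping that reconciles the three quantifiers (the single $\varepsilon$, the sequence $\boldsymbol{b}$, and the arbitrary admissible $\boldsymbol\rho$) so that every estimate holds with constants independent of $\boldsymbol\rho$ — this is exactly what the $\ell^1$-summability of $\boldsymbol{b}$ and the admissibility bound \cref{eq:admissible_polyradius} provide. A secondary subtlety is the choice of ambient complex Banach space: one must work in the complexification $\mathscr{C}^{0,1}(\hat\Gamma,\IC^3)$ so that "holomorphic" makes sense, and observe that the restriction of the extension to real parameters recovers the original $\IR^3$-valued map. No real obstacle arises because the parameter dependence is affine; the substance of the paper lies in the subsequent lemmas, where $\boldsymbol{r}_\z$ is composed with the (nonlinear) integral kernels.
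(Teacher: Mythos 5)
Your proposal is correct, and it is essentially the argument the paper has in mind: the paper itself omits the proof and simply defers to \cite[Lemma 5.8]{CSZ18}, which proceeds by the same direct verification you give, exploiting that the dependence on $\y$ is affine so that the complex extension is the same series with $y_j$ replaced by $z_j$, the tail estimate follows from $\boldsymbol{b}\in\ell^p\subset\ell^1$ together with the admissibility bound \cref{eq:admissible_polyradius}, and $\partial_{z_j}\boldsymbol{r}_\z=\bvarphi_j$ exactly (so the appeal to \cref{lmm:herve} is not even needed, since the difference quotient has no remainder). Your bookkeeping of the quantifiers, the complexification $\mathscr{C}^{0,1}(\hat\Gamma,\IC^3)$, and the dominated-convergence argument for continuity in the product topology are all sound.
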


\begin{proof}
The proof of this result follows the exact same steps as in
the proof of \cite[Lemma 5.8]{CSZ18}.
For the sake of brevity, we skip it.
\end{proof}

\subsection{Parametric Holomorphy of Boundary Functions}
We study the parametric holomorphy property for a variety 
of functions appearing in the parametric holomorphy analysis
of BIOs to be carried out in \cref{sec:BIO}.

\subsubsection{Parametric Holomorphy of the Gram determinant}
Let $\y\in\mathbb{U}$, $f\colon\Gamma_\y \rightarrow \mathbb{C}$
a measurable function. Since we assume $\hat\Gamma$ to be Lipschitz,
there exists a decomposition $\mathcal{G}$ of $\hat{\Gamma}$ into finitely many
open and disjoint patches $\tau$, i.e.,
\begin{align}\label{eq:boundarydecomposition}
\hat{\Gamma}=\bigcup _{\tau\in\mathcal{G}}\overline{\tau},
\end{align}
such that each $\tau\in\mathcal{G}$ is the image of a Lipschitz
diffeomorphism $\chi_\tau\colon\widetilde{\tau}\to\tau$ with
$\widetilde{\tau}\subset\IR^2$. We remark that
$\chi_\tau$ is defined for each $\widetilde{\tau}$.

For each $\tau\in\mathcal{G}$,
we set $\chi_{\tau,\y}=\br_\y\circ\chi_\tau$ and $\widetilde{f}_{\tau,\y}=f\circ\br_\y\circ\chi_\tau$.
Then, the surface integral of $f$ over $\Gamma_\y$ can be written as
\begin{align}
	\int\limits_{\Gamma_\y} f(\mathbf{x}) \mathrm{d} s_{\mathbf{x}}
	=
	\sum_{\tau \in \mathcal{G}} 
	\int\limits_{\widetilde{\tau}} \widetilde{f}_{\tau,\y}(\widetilde{\mathbf{u}}) 
	\sqrt{g_{\tau,\y}(\widetilde{\mathbf{u}})} 
	\mathrm{d} \widetilde{\mathbf{u}},
\end{align}
with the Gram determinant
\begin{align}
g_{\tau,\y}(\widetilde{\mathbf{u}})=\operatorname{det}G_{\tau,\y}(\widetilde{\mathbf{u}}),\quad\widetilde{\mathbf{u}}\in\widetilde{\tau},
\end{align}
of the Gram matrix
\begin{align}
G_{\tau,\y}(\widetilde{\mathbf{u}})=\big(D_{\widetilde{\mathbf{u}}}\chi_{\tau,\y}(\widetilde{\mathbf{u}})\big)^{\top}\big(D_{\widetilde{\mathbf{u}}}\chi_{\tau,\y}(\widetilde{\mathbf{u}})\big),\quad\widetilde{\mathbf{u}}\in\widetilde{\tau},
\end{align}
where $D_{\widetilde{\mathbf{u}}}\chi_{\tau,\y}(\widetilde{\mathbf{u}}) \in \IR^{3 \times 2}$
signifies the Jacobian of $\chi_{\tau,\y}$.
\begin{lemma}\label{lemma:gramian_holomorphic}
Let \cref{assump:parametric_boundary} be satisfied.
Then there exists $\varepsilon>0$ such that for each
$\tau \in \mathcal{G}$ the maps
\begin{align}\label{eq:gramiandet}
	\mathbb{U}
	\ni
	\y
	\mapsto
	g_{\tau,\y}
	\in
	L^\infty(\widetilde{\tau})
\end{align}
and
\begin{align}\label{eq:gramiandetroot}
	\mathbb{U}
	\ni
	\y
	\mapsto
	\sqrt{g_{\tau,\y}}
	\in
	L^\infty(\widetilde{\tau})
\end{align}	
are $(\boldsymbol{b},p,\varepsilon)$-holomorphic and continuous. 
Moreover, for any $(\boldsymbol{b},\varepsilon)$--admisible sequence
$\boldsymbol\rho =  (\rho_j)_{j\geq1}$ of numbers strictly
larger than one, for the extension
\begin{align}
	\mathcal{O}_{\boldsymbol{\rho}}
	\ni
	\z
	\mapsto
	g_{\tau,\z}
	\in
	L^\infty(\widetilde{\tau})
\end{align}
there exist $0<\underline{\zeta}\leq\overline{\zeta}<\infty$ such that
\begin{align}\label{eq:gramianbound}
	0
    <
    \underline{\zeta}
    \leq
    \inf_{\z \in \mathcal{O}_{\boldsymbol{\rho}}}
	\inf_{\widetilde{\mathbf{u}} \in \widetilde{\tau}}
    \Re
	(g_{\tau,\z}(\widetilde{\mathbf{u}} ))
	\leq
    \sup_{\z \in \mathcal{O}_{\boldsymbol{\rho}}}
	\sup_{\widetilde{\mathbf{u}} \in \widetilde{\tau}}
    \Re
	(g_{\tau,\z}(\widetilde{\mathbf{u}} ))
    \leq
    \overline{\zeta}
    <
    \infty.
\end{align}
In addition, for each $\tau \in \mathcal{G}$ it holds 
\begin{equation}\label{eq:norm_sqrt_g}
	\sup_{\z \in \mathcal{O}_{\boldsymbol{\rho}}}
	\norm{\sqrt{g_{\tau,\z}}}_{L^\infty(\widetilde{\tau})}
	<\infty.
\end{equation}
\end{lemma}
\begin{proof}
It follows from \cref{assump:parametric_boundary3} in
\cref{assump:parametric_boundary} that for each $\tau \in \mathcal{G}$ we have that
\begin{equation}
\mathbb{U}
\ni
\y
\mapsto
D_{\widetilde{\bx}}\chi_{\tau,\y}\in[L^\infty(\widetilde{\tau})]^{3\times 2}
\end{equation}
and, thus, also
\begin{equation}
	\mathbb{U}
	\ni
	\y
	\mapsto
	G_{\tau,\by}
	\in 
	\left[
		L^{\infty}(\widetilde{\tau})
	\right]^{2\times2}
\end{equation}
is $(\boldsymbol{b},p,\varepsilon')$-holomorphic and continuous with $(\boldsymbol{b},p,\varepsilon')$ as in \cref{lmm:holomorphic_parametric_rep}. Since the determinant is a polynomial, this yields $(\boldsymbol{b},p,\varepsilon')$-holomorphy of $g_{\tau,\z}$. Moreover, for each $\y\in\mathbb{U}$, the Gram matrix $G_{\tau,\y}$ is positive definite and symmetric. Thus, it has real and positive eigenvalues whose product coincides with $g_{\tau,\y}$. Since the eigenvalues depend continuosly on $G_{\tau,\y}$, this yields that $g_{\tau,\y}$ is bounded uniformly from below and above. Thus, there is $\varepsilon>0$ such that \cref{eq:gramianbound} and \cref{eq:norm_sqrt_g} hold,
which implies the $(\boldsymbol{b},p,\varepsilon)$-holomorphy and continuity of \cref{eq:gramiandetroot}.
\end{proof}

\subsubsection{The Pullback Operator}
For each $\y \in \mathbb{U}$ and $\varphi \in L^2(\Gamma_\y)$, we set
\begin{equation}
	\tau_\y \varphi
	\coloneqq
	\varphi\circ\boldsymbol{r}_\y
    \in L^2(\hat\Gamma).
\end{equation}

\begin{lemma}\label{lmm:pullback_operator}
Let \cref{assump:parametric_boundary} be satisfied.
Then, for each $\y \in \mathbb{U}$ the map
\begin{equation}
	L^2(\Gamma_\y)
	\ni
	\varphi
	\mapsto
	\hat{\varphi}
	\coloneqq
	\tau_\y \varphi
	\in
	L^2(\hat{\Gamma})
\end{equation}
defines an isomorphism, i.e.
$\tau_\y\in\mathscr{L}_{\normalfont{\text{iso}}}(L^2_\y(\Gamma),L^2(\hat{\Gamma}))$
for each $\y \in \mathbb{U}$.
\end{lemma}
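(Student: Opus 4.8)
The plan is to show that $\tau_\y$ is a bounded bijection between $L^2(\Gamma_\y)$ and $L^2(\hat\Gamma)$ with bounded inverse, exploiting the bi-Lipschitz property of $\br_\y$ from \cref{assump:parametric_boundary2} together with the change-of-variables formula encoded in the Gramian determinants $g_{\tau,\y}$. First I would note that $\tau_\y$ is clearly linear, and that its algebraic inverse is simply $\tau_\y^{-1}\psi = \psi\circ\br_\y^{-1}$, which is well defined as a measurable function precisely because $\br_\y\colon\hat\Gamma\to\Gamma_\y$ is bijective (and bi-Lipschitz, hence maps null sets to null sets in both directions, so the composition is independent of the choice of representative). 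Thus the only real content is the two-sided norm equivalence
\begin{equation*}
	c\,\norm{\varphi}_{L^2(\Gamma_\y)}
	\leq
	\norm{\tau_\y\varphi}_{L^2(\hat\Gamma)}
	\leq
	C\,\norm{\varphi}_{L^2(\Gamma_\y)},
	\qquad
	\varphi\in L^2(\Gamma_\y),
\end{equation*}
for constants $0<c\leq C<\infty$ possibly depending on $\y$.

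To establish this, I would decompose $\hat\Gamma$ into the patches $\tau\in\mathcal{G}$ from \cref{eq:boundarydecomposition}, use the charts $\chi_\tau\colon\tilde\tau\to\tau$ and $\chi_{\tau,\y}=\br_\y\circ\chi_\tau\colon\tilde\tau\to\Gamma_\y$, and write both norms as finite sums over $\tau$ of planar integrals over $\tilde\tau$. Concretely, with $\tilde\varphi_{\tau,\y}=\varphi\circ\br_\y\circ\chi_\tau$ one has $\widehat\varphi\circ\chi_\tau = \tilde\varphi_{\tau,\y}$, so that
\begin{equation*}
	\norm{\tau_\y\varphi}_{L^2(\hat\Gamma)}^2
	=
	\sum_{\tau\in\mathcal{G}}
	\int_{\tilde\tau}\snorm{\tilde\varphi_{\tau,\y}(\tilde{\mathbf u})}^2\sqrt{g_\tau(\tilde{\mathbf u})}\d\tilde{\mathbf u},
	\qquad
	\norm{\varphi}_{L^2(\Gamma_\y)}^2
	=
	\sum_{\tau\in\mathcal{G}}
	\int_{\tilde\tau}\snorm{\tilde\varphi_{\tau,\y}(\tilde{\mathbf u})}^2\sqrt{g_{\tau,\y}(\tilde{\mathbf u})}\d\tilde{\mathbf u},
\end{equation*}
where $g_\tau$ is the (parameter-independent) Gramian determinant of the reference chart $\chi_\tau$ and $g_{\tau,\y}$ is that of $\chi_{\tau,\y}$. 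The bi-Lipschitz continuity of $\br_\y$ (Assumption \ref{assump:parametric_boundary2}), together with the Lipschitz diffeomorphism property of each $\chi_\tau$, guarantees that $g_\tau$ and $g_{\tau,\y}$ are each bounded above and below by positive constants a.e.\ on $\tilde\tau$; the lower and upper bounds for $g_{\tau,\y}$ in particular follow from the positive definiteness and symmetry of $G_{\tau,\y}$ argued in the proof of \cref{lemma:gramian_holomorphic}. Hence the ratio $\sqrt{g_\tau}/\sqrt{g_{\tau,\y}}$ is bounded above and below uniformly on $\tilde\tau$ (and, since $\mathcal{G}$ is finite, uniformly over all patches), which yields the desired two-sided estimate and shows that $\tau_\y$ and $\tau_\y^{-1}$ are both bounded. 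Therefore $\tau_\y\in\mathscr{L}_{\text{iso}}(L^2(\Gamma_\y),L^2(\hat\Gamma))$ for each $\y\in\mathbb{U}$.

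The main obstacle, such as it is, is a measure-theoretic bookkeeping point rather than a hard estimate: one must verify that pulling back along $\br_\y$ genuinely respects the equivalence classes of $L^2$ functions (i.e.\ that $\br_\y$ and $\br_\y^{-1}$ map surface-measure-null sets to surface-measure-null sets), which is exactly what bi-Lipschitzness delivers, and that the patchwise change-of-variables formula applies — legitimate here because each $\chi_{\tau,\y}$ is a composition of a Lipschitz diffeomorphism with a bi-Lipschitz map, hence Lipschitz with Lipschitz inverse and differentiable a.e.\ by Rademacher's theorem. Once this is in place, the norm equivalence is immediate from the uniform upper and lower bounds on the Gramian determinants and the finiteness of $\mathcal{G}$.
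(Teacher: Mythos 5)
Your proposal is correct and follows essentially the same route as the paper's proof: localize to the patches of $\mathcal{G}$, express both $\norm{\varphi}_{L^2(\Gamma_\y)}$ and $\norm{\tau_\y\varphi}_{L^2(\hat\Gamma)}$ as planar integrals over $\tilde\tau$ weighted by $\sqrt{g_{\tau,\y}}$ and $\sqrt{g_\tau}$, respectively, and conclude via the uniform upper and lower Gramian bounds of \cref{lemma:gramian_holomorphic} (i.e.\ \cref{eq:gramianbound} restricted to $\mathbb{U}$). Your additional remarks on null-set preservation under the bi-Lipschitz map and the a.e.\ applicability of the change-of-variables formula are fine and only make explicit what the paper leaves implicit.
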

\begin{proof}
As a consequence of \cref{assump:parametric_boundary} 
 for each $\y \in \mathbb{U}$ we obtain
\begin{equation}
	\int\limits_{\Gamma_\y} 
	\snorm{\varphi(\mathbf{x})}^2
	\text{d} 
	s_{\mathbf{x}}
	=
	\sum_{\tau \in \mathcal{G}} 
	\int\limits_{\widetilde{\tau}} 
	\snorm{
	\left(
		\tau_\y \varphi
		\circ
		\chi_\tau
	\right)
	(\widetilde{\mathbf{u}}) 
	}^2
	\sqrt{g_{\tau,\y}(\widetilde{\mathbf{u}})} 
	\text{d} 
	\widetilde{\mathbf{u}},
\end{equation}
and
\begin{equation}
	\int\limits_{\hat\Gamma} 
	\snorm{
	\left(
		\tau_\y \varphi
	\right)
	(\hat{\mathbf{x}}) 
	}^2
	\text{d} 
	s_{\hat{\mathbf{x}}}
	=
	\sum_{\tau \in \mathcal{G}} 
	\int\limits_{\widetilde{\tau}} 
	\snorm{
	\left(
		\tau_\y \varphi
		\circ
		\chi_\tau
	\right)
	(\widetilde{\mathbf{u}}) 
	}^2
	\sqrt{g_{\tau}(\widetilde{\mathbf{u}})} 
	\text{d} 
	\widetilde{\mathbf{u}},
\end{equation}
where $g_{\tau}$ is the Gram determinant
of $ \chi_\tau$. Using \cref{eq:gramianbound} on $\mathbb{U}\subset\mathcal{O}_{\boldsymbol{\rho}}$ yields the assertion.
\end{proof}

\subsubsection{Parametric Shape Holomorphy of the Normal Vector}
Certain integral operators, such as the double layer operator in
potential layer theory, depend on the outward pointing normal.
To this end, we establish parametric holomorphy of the outward
pointing normal vector. 
\begin{lemma}\label{lmm:normal_derivative}
Let \cref{assump:parametric_boundary} be fulfilled and $(\boldsymbol{b},p,\varepsilon)$ as in \cref{lemma:gramian_holomorphic}. Then, pullback of the outward pointing normal
\begin{equation}\label{eq:pullbacknormal}
	\mathbb{U}
	\ni
	\y
	\mapsto
	\hat{\mathbf{n}}_\y\isdef
    \tau_\y 
	\mathbf{n}_{\Gamma_\y}
	\in 
	L^{\infty}(\hat\Gamma)
\end{equation}	
is $(\boldsymbol{b},p,\varepsilon)$-holomorphic and continuous.
\end{lemma}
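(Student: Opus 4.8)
The plan is to express the pulled-back normal vector $\hat{\mathbf n}_\y$ patchwise in terms of quantities whose parametric holomorphy has already been established, and then invoke the chain rule for $(\boldsymbol{b},p,\varepsilon)$-holomorphic maps together with the uniform lower bound \cref{eq:gramianbound}. Concretely, on each patch $\tau\in\mathcal G$ with chart $\chi_\tau\colon\tilde\tau\to\tau$, the columns of the Jacobian $D_{\tilde{\mathbf u}}\chi_{\tau,\y}\in\IR^{3\times2}$ span the tangent plane, so the (unnormalised) normal is their cross product $\mathbf c_{\tau,\y}(\tilde{\mathbf u}) = \partial_1\chi_{\tau,\y}(\tilde{\mathbf u})\times\partial_2\chi_{\tau,\y}(\tilde{\mathbf u})$, and the outward unit normal pulled back to $\hat\Gamma$ is $\hat{\mathbf n}_\y\circ\chi_\tau = \pm\,\mathbf c_{\tau,\y}/\snorm{\mathbf c_{\tau,\y}}$, where the sign is fixed (independently of $\y$, by continuity in the parameter and \cref{assump:parametric_boundary2}) so that the orientation is outward. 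The key algebraic identity is $\snorm{\mathbf c_{\tau,\y}}^2 = g_{\tau,\y}$, the Gramian determinant, by the Lagrange identity for the cross product.

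First I would record that $\mathbb U\ni\y\mapsto D_{\tilde{\mathbf u}}\chi_{\tau,\y}\in[L^\infty(\tilde\tau)]^{3\times2}$ is $(\boldsymbol{b},p,\varepsilon)$-holomorphic and continuous, which was already used inside the proof of \cref{lemma:gramian_holomorphic}. Next, since the cross product is bilinear in the two Jacobian columns, the map $\y\mapsto\mathbf c_{\tau,\y}\in[L^\infty(\tilde\tau)]^3$ is again $(\boldsymbol{b},p,\varepsilon)$-holomorphic and continuous; this follows because products and sums of bounded holomorphic maps into $L^\infty$ are holomorphic, with the poly-ellipse bound controlled by the product of the individual bounds (and one may need to slightly shrink $\varepsilon$, but \cref{lemma:gramian_holomorphic} already fixes a compatible $(\boldsymbol{b},p,\varepsilon)$). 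Then $\y\mapsto\snorm{\mathbf c_{\tau,\y}}^2 = g_{\tau,\y}$ is holomorphic by \cref{lemma:gramian_holomorphic}, and crucially \cref{eq:gramianbound} gives $\Re(g_{\tau,\z})\geq\underline\zeta>0$ uniformly over $\z\in\mathcal O_{\boldsymbol\rho}$, so $g_{\tau,\z}$ stays in a half-plane bounded away from $0$. Since $w\mapsto w^{-1/2}$ (principal branch) is holomorphic on that half-plane and the composition of a scalar holomorphic function with a Banach-space-valued holomorphic map into the appropriate region is holomorphic (using that $L^\infty(\tilde\tau)$ is a Banach algebra and the uniform two-sided bounds keep values in a compact subset of the domain of $w\mapsto w^{-1/2}$), the map $\y\mapsto g_{\tau,\z}^{-1/2}\in L^\infty(\tilde\tau)$ is $(\boldsymbol{b},p,\varepsilon)$-holomorphic and continuous. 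Multiplying the holomorphic $\mathbf c_{\tau,\y}$ by the holomorphic scalar field $g_{\tau,\y}^{-1/2}$ and by the fixed sign yields $(\boldsymbol{b},p,\varepsilon)$-holomorphy and continuity of $\y\mapsto\hat{\mathbf n}_\y\circ\chi_\tau\in[L^\infty(\tilde\tau)]^3$ on each patch; assembling over the finitely many patches of $\mathcal G$ and using that $L^\infty(\hat\Gamma)$ is characterised patchwise gives the claim for $\y\mapsto\hat{\mathbf n}_\y\in L^\infty(\hat\Gamma)$. Uniform boundedness on $\mathbb U$ (item \ref{def:bpe_holomorphy1} of \cref{def:bpe_holomorphy}) is immediate since $\snorm{\hat{\mathbf n}_\y}=1$ a.e., and the poly-ellipse bound (item \ref{def:bpe_holomorphy3}) follows from $\snorm{\mathbf c_{\tau,\z}}$ being bounded above (from the upper bound in \cref{eq:gramianbound}, or directly from the bound on the Jacobian) and $g_{\tau,\z}^{-1/2}$ being bounded by $\underline\zeta^{-1/2}$-type estimates on $\mathcal E_{\boldsymbol\rho}\subset\mathcal O_{\boldsymbol\rho}$.

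The main obstacle I anticipate is the composition step $w\mapsto w^{-1/2}$: one must verify that the scalar holomorphic function applied to the $L^\infty$-valued holomorphic map $g_{\tau,\z}$ is again holomorphic as a Banach-space-valued map, which requires that the essential range of $g_{\tau,\z}$ lie, uniformly in $\z\in\mathcal O_{\boldsymbol\rho}$, in a fixed compact subset of the domain of analyticity of $w\mapsto w^{-1/2}$. The lower bound $\Re(g_{\tau,\z})\geq\underline\zeta$ in \cref{eq:gramianbound} guarantees this (the range stays in $\{\Re w\geq\underline\zeta\}$, on which the principal square root and its reciprocal are bounded holomorphic functions with bounded derivatives), and then one can justify holomorphy either by a power-series/Neumann-series argument around each base point — writing $g_{\tau,\z}^{-1/2} = g_{\tau,\z_0}^{-1/2}\big(1 + g_{\tau,\z_0}^{-1}(g_{\tau,\z}-g_{\tau,\z_0})\big)^{-1/2}$ and expanding via the binomial series in the Banach algebra $L^\infty(\tilde\tau)$ — or by appealing to \cref{lmm:herve} with polynomial (Taylor) truncations of $w\mapsto w^{-1/2}$ that converge uniformly on the relevant compact set. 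A secondary, purely bookkeeping point is the choice of orientation sign: one fixes it at $\y=\bsnul$ so that $\mathbf c_{\tau,\bsnul}$ points outward, and then \cref{assump:parametric_boundary2} (bijectivity and the bi-Lipschitz property for every $\y$, together with continuity of $\y\mapsto\mathbf c_{\tau,\y}$ and the fact that $g_{\tau,\y}\neq0$) prevents the normal from ever vanishing or flipping, so the same sign works throughout $\mathbb U$ and, by the holomorphic extension, throughout $\mathcal O_{\boldsymbol\rho}$. None of these steps is deep; they are the standard composition-and-assembly arguments, and the only quantitative input needed beyond \cref{lemma:gramian_holomorphic} is the uniform positivity of $\Re(g_{\tau,\z})$ already provided by \cref{eq:gramianbound}.
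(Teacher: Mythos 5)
Your proposal is correct and follows essentially the same route as the paper's proof: the non-normalized normal is the cross product of the Jacobian columns of $\chi_{\tau,\y}$ (holomorphic via \cref{lmm:holomorphic_parametric_rep}), its norm equals $\sqrt{g_{\tau,\y}}$ so \cref{lemma:gramian_holomorphic} applies, and the uniform lower bound \cref{eq:gramianbound} justifies dividing by it patchwise. The extra detail you supply on the $w\mapsto w^{-1/2}$ composition in the Banach algebra $L^\infty(\tilde\tau)$ and on fixing the orientation sign merely makes explicit what the paper's short proof leaves implicit.
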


\begin{proof}
Let $\tau\in\mathcal{G}$, $\chi_{\tau,\y}=\br_\y\circ\chi_\tau$, and $\hat{\bx}=\chi_{\tau}(\widetilde{\mathbf{u}})$. Then, the non-normalized outward-pointing normal
\begin{equation}
\hat{\mathbf{m}}_\y(\hat{\bx})=\big(\partial_1\chi_{\tau,\y}\times\partial_2\chi_{\tau,\y}\big)(\hat{\mathbf{u}})
\end{equation}
is $(\boldsymbol{b},p,\varepsilon)$-holomorphic due to \cref{lmm:holomorphic_parametric_rep}.
Moreover, due to $\norm{\hat{\mathbf{m}}_{\y}(\hat{\mathbf{x}})}=\sqrt{g_{\tau,\y}(\widetilde{\mathbf{u}})}$ for all $\y\in\mathbb{U}$, \cref{lemma:gramian_holomorphic} implies that also $\norm{\hat{\mathbf{m}}_{\y}}$ is $(\boldsymbol{b},p,\varepsilon)$-holomorphic and continuous.
Thus, since
\begin{equation}
	\hat{\mathbf{n}}_\y(\hat{\bx})
	=
	\frac{\hat{\mathbf{m}}_\y(\hat{\bx})}{\norm{\hat{\mathbf{m}}_\y(\hat{\bx})}},
	\quad
	\hat{\bx}
	\in
	\hat\Gamma,
\end{equation}
\cref{{eq:gramianbound}} implies the assertion.
\end{proof}

\subsection{Riesz-Thorin Interpolation Theorem}
\label{sec:rt_theorem}
We briefly recall a result concerning the boundedness
of BIOs in a $L^2$-based setting, to be used in the parametric holomorphy
analysis in \cref{sec:BIO}.  To this end, let $\Gamma$ be a bounded
Lipschitz boundary in $\IR^3$ and
\begin{equation}
	(\OT_\Gamma \varphi)(\bx)
	\isdef
	\int\limits_{\Gamma}
	{\sf t}(\by,\bx-\by)\varphi(\by)\mathrm{ds}_\by,
	\quad
	\bx
	\in
	\Gamma,
\end{equation}
an integral operator with kernel
$\mathsf{t}: \Gamma \times \IR^3 \backslash \{{\bf 0}\} \rightarrow \IC$.
In the following, to show the mapping property 
$\OT_\Gamma\colon L^2(\Gamma)\to L^2(\Gamma)$, 
we rely on the \emph{Riesz-Thorin interpolation theorem}
\cite[Theorem 2.b.14]{LT77}, which states that
the operator norm of $\OT_{\Gamma}: L^2(\Gamma)
\rightarrow  L^2(\Gamma)$
is bounded according to
\begin{align}\label{eq:rieszthorin}
	\norm{
		\OT_{{\Gamma}}
	}_{\mathscr{L}\left(L^2(\Gamma), L^2(\Gamma)\right)}
	\leq
	\norm{
		\OT_{{\Gamma}}
	}_{\mathscr{L}\left(L^1(\Gamma), L^1(\Gamma)\right)}^{\half}
	\norm{
		\OT_{{\Gamma}}
	}_{\mathscr{L}\left(L^\infty(\Gamma),L^\infty(\Gamma)\right)}^{\half}.
\end{align}
To estimate the operator norms in the upper bound of \cref{eq:rieszthorin}
we use their explicit characterizations
\begin{align}	
	\norm{
		\OT_{{\Gamma}}
	}_{\mathscr{L}\left(L^1(\Gamma), L^1(\Gamma)\right)}
	&=
	\underset{\by \in {\Gamma}}{\operatorname{ess} \sup} 
	\int\limits_{\Gamma}|\mathsf{t}(\by, \bx-\by)| \mathrm{d} s_\bx,
	\\
	\norm{
		\OT_{{\Gamma}}
	}_{\mathscr{L}\left(L^\infty(\Gamma), L^\infty(\Gamma)\right)}
	&=
	\underset{\bx \in {\Gamma}}{\operatorname{ess \sup}}
	\int\limits_{\Gamma}|\mathsf{t}(\by, \bx-\by)| \mathrm{d} s_\by.
\end{align}

\section{Parametric Shape Holomorphy of Boundary Integral Operators}
\label{sec:BIO}
In this section, we present and prove the parametric holomorphy property
for the parameter-to-operator \cref{eq:param_to_operator} and
parameter-to-solution maps \cref{eq:param_to_solution}
in an $L^2$-based setting.
To this end, for each $\y \in \mathbb{U}$ and by using the pullback operator 
$\tau_\y: L^2(\Gamma_\y)\rightarrow L^2(\hat\Gamma)$,
we can transport the BIO $\OA_{\y}: L^2(\Gamma_\y)\rightarrow L^2(\Gamma_\y)$
introduced in \cref{eq:parametric_BIO} to the reference boundary $\hat\Gamma$ as follows
\begin{align}\label{eq:A_reference}
	\hat{\OA}_\y
	\coloneqq
	\tau_\y
	\,
	\OA_{\y}
	\,
	\tau^{-1}_\y.
\end{align}
As a consequence of \cref{lmm:pullback_operator}, 
for each $\y \in \mathbb{U}$, one has that
${\OA}_{\y}: L^2(\Gamma_\y) \rightarrow  L^2(\Gamma_\y)$
is a bounded linear operator if and only if
$\hat{\OA}_\y\colon L^2(\hat\Gamma)\to L^2(\hat\Gamma)$ is so as well.

The remainder of the section is structured as follows.
In \cref{sec:kernel_bounded}, assuming
that the kernel function ${\sf a}: \Gamma_\y \times \IR \rightarrow \IC$
is bounded, we prove parametric holomorphy the map $\y \mapsto \hat{\OA}_\y$,
in the sense of \cref{def:bpe_holomorphy}. 
Next, in \cref{sec:parametric_holomorphy}, we no longer assume 
${\sf a}(\cdot,\cdot)$ to be bounded, but only integrable, and extend 
the parametric holomorphy result to this scenario.  
Finally, in \cref{sec:parameric_d2s_map},
we establish parametric holomorphy of the parameter-to-solution map.
\subsection{Parametric Shape Holomorphy for BIOs with Bounded Kernel}
\label{sec:kernel_bounded}
We proceed to establish parametric shape 
holomorphy of a class of BIOs characterized by a bounded kernel function.



\begin{theorem}\label{thm:holomorphy_bounded_kernel}
Let \cref{assump:parametric_boundary} be fulfilled
with $\boldsymbol{b} \in \ell^p(\IN)$ and $p\in (0,1)$.
For each $\tau \in \mathcal{G}$ we define
\begin{align}
	\mathsf{k}_{\tau,\y}
	\left(
		\hat{\bx}
		,
		\widetilde{\mathbf{u}}
	\right)
	\coloneqq
	\mathsf{a}
	\left(
		\boldsymbol{r}_\y \circ \chi_\tau
		\left(
			\widetilde{\mathbf{u}}
		\right)
		,
		\boldsymbol{r}_\y(\hat{\mathbf{x}})
		-
		\boldsymbol{r}_\y \circ \chi_\tau
		\left(
			\widetilde{\mathbf{u}}
		\right)
	\right),
	\quad
	(\hat{\bx},\widetilde{\mathbf{u}})
	\in 
	\hat{\Gamma} \times \widetilde{\tau}.
\end{align}
In addition, for each $\tau \in \mathcal{G}$ we assume that the map
\begin{equation}\label{eq:ktauy}
	\mathbb{U}
	\ni
	\y
	\mapsto
	\mathsf{k}_{\tau,\y}
	\in
	L^{\infty}
	\left(
		\hat{\Gamma} \times \widetilde{\tau}
	\right)
\end{equation}
is $(\boldsymbol{b},p,\varepsilon)$-holomorphic
and continuous for some $\varepsilon>0$.
Then, the map
\begin{equation}
	\mathcal{A}:
	\mathbb{U}
	\rightarrow
	\mathscr{L}
	\left(
		L^2(\hat\Gamma)
		,
		L^2(\hat\Gamma)
	\right):
	\y
	\mapsto
	\hat{\mathsf{A}}_{\y}
\end{equation}
with $\hat{\OA}_\y$ as in \cref{eq:A_reference} is $(\boldsymbol{b},p,\varepsilon)$-holomorphic
and continuous.
\end{theorem}

\begin{proof}
Throughout, let $\boldsymbol\rho\coloneqq (\rho_j)_{j\geq1}$ 
be any  $(\boldsymbol{b},\varepsilon)$-admissible 
sequence of numbers of numbers strictly larger than one.
For the sake of readability, we divide the proof
into three steps.\\

{\bf Step \encircled{A}: Localization of $\hat{\OA}_\y$ to a patch.} 
Using the decomposition into patches $\mathcal{G}$
of the reference boundary $\hat\Gamma$
as introduced in \cref{eq:boundarydecomposition},
for any $\hat{\varphi} \in L^2(\hat{\Gamma})$ and 
$\y \in \mathbb{U}$ one has
\begin{align}\label{eq:decomposition_A_panels}
	\left(
		\hat{\OA}_\y
		\,
		\hat{\varphi} 
	\right)
	\left(
		\hat{\bx}
	\right)
	=
	\sum_{\tau \in \mathcal{G}} 
	\left(
		\hat{\OA}_{\tau,\y}
		\,
		\hat{\varphi} 
	\right)
	\left(
		\hat{\bx}
	\right),
	\quad
	\hat\bx
	\in
	\hat\Gamma,
\end{align}
where, for $\hat{\varphi} \in L^2(\hat\Gamma)$, we set
\begin{equation}\label{eq:local_A_y}
	\left(
		\hat{\OA}_{\tau,\y}
		\,
		\hat{\varphi} 
	\right)
	\left(
		\hat{\bx}
	\right)
	\coloneqq
	\int\limits_{\widetilde{\tau}} 
	\mathsf{b}_{\tau,\y}
	(\hat\bx,\widetilde{{\bf u}})
	\left(
		\hat{\varphi}
		\circ
		\chi_\tau
	\right)
	(\widetilde{\mathbf{u}})
	\,
	\text{d} 
	\widetilde{\mathbf{u}},
\end{equation}
with
\begin{equation}
\mathsf{b}_{\tau,\y}(\hat\bx,\widetilde{{\bf u}})\coloneqq\mathsf{k}_{\tau,\y}\left(
\hat{\bx},\widetilde{\mathbf{u}}\right) \sqrt{g_{\tau,\y}(\widetilde{\mathbf{u}})},
\qquad 
(\hat\bx,\hat{\bf u})\in\hat{\Gamma} \times \widetilde{\tau}.
\end{equation}
To obtain an extension of \eqref{eq:local_A_y} in
$\y\in\mathbb{U}$ to complex-valued inputs $\z \in \mathcal{O}_{\boldsymbol{\rho}}$,
for each $\tau \in \mathcal{G}$, let $\mathsf{k}_{\tau,\z}$ denote the holomorphic
extension of $\mathsf{k}_{\tau,\y}$ to $\z \in \mathcal{O}_{\boldsymbol{\rho}}$ and define
\begin{equation}
	\mathsf{b}_{\tau,\z}
	(\hat\bx,\widetilde{{\bf u}})
	\coloneqq
	\mathsf{k}_{\tau,\z}
	\left(
		\hat{\bx}
		,
		\widetilde{\mathbf{u}}
	\right)
	\sqrt{g_{\tau,\z}(\widetilde{\mathbf{u}})} 
	,
	\quad
	(\hat\bx,\widetilde{{\bf u}})
	\in
	\hat{\Gamma} \times \widetilde{\tau},
\end{equation}
and
\begin{equation}\label{eq:defAhattauz}
	\left(
		\hat{\OA}_{\tau,\z}
		\,
		\hat{\varphi} 
	\right)
	\left(
		\hat{\bx}
	\right)
	\coloneqq
	\int\limits_{\widetilde{\tau}} 
	\mathsf{b}_{\tau,\z}
	(\hat\bx,\widetilde{{\bf u}})
	\left(
		\hat{\varphi}
		\circ
		\chi_\tau
	\right)
	(\widetilde{\mathbf{u}})
	\,
	\text{d} 
	\widetilde{\mathbf{u}},
\end{equation}
for $\hat{\varphi} \in L^2(\hat\Gamma)$. This induces the map
\begin{equation}\label{eq:linftyAt}
	\mathcal{A}_\tau:
	\mathbb{U}
	\rightarrow
	\mathscr{L}
	\left(
		L^2(\hat\Gamma)
		,
		L^2(\hat\Gamma)
	\right):
	\y
	\mapsto
	\hat{\mathsf{A}}_{\tau,\y}.
\end{equation}

{\bf Step \encircled{{\sf B}}: $(\boldsymbol{b},p,\varepsilon)$-holomorphy of $\mathcal{A}_{\tau}$.}
Now, for each $\tau \in \mathcal{G}$, the $(\boldsymbol{b},p,\varepsilon_{\tau,g})$-holomorphy and continuity of \cref{eq:gramiandetroot} and the $(\boldsymbol{b},p,\varepsilon_{\tau,k})$-holomorpy and continuity of \cref{eq:ktauy} imply that the map 
\begin{equation}\label{eq:linftykernelderivative}
	\mathbb{U}
	\ni
	\y
	\mapsto
	\mathsf{b}_{\tau,\y}
	\in
	L^\infty
	\left(
		\hat{\Gamma} 
		\times 
		\widetilde{\tau}
	\right)
\end{equation}
is $(\boldsymbol{b},p,\varepsilon_\tau)$-holomorphic with $\varepsilon_\tau=\min\{\varepsilon_{\tau,g},\varepsilon_{\tau,k}\}>0$
and continuous.
Moreover, for all $\z\in\mathcal{E}_{\boldsymbol\rho}$ it holds
\begin{equation}\label{eq:boundAtzwithbtz}
	\norm{
		\hat{\OA}_{\tau,\z}
	}_{\mathscr{L}(L^2(\hat\Gamma), L^2(\hat\Gamma))}
	\leq
	C_{\tau}
	\norm{
		\mathsf{b}_{\tau,\z}
	}_{
	L^\infty
	\left(
		\hat{\Gamma} 
		\times 
		\widetilde{\tau}
	\right)
	},
\end{equation}
which, together with
the $(\boldsymbol{b},p,\varepsilon)$-holomorphy
of the map $\y\mapsto \mathsf{b}_{\tau,\y}$ from \cref{eq:linftykernelderivative},
allows to verify
\cref{def:bpe_holomorphy1} and \cref{def:bpe_holomorphy3} in the \cref{def:bpe_holomorphy} of 
$(\boldsymbol{b},p,\varepsilon)$-holomorphy  for the map \cref{eq:linftyAt}.

To verify \cref{def:bpe_holomorphy2} we note that the $(\boldsymbol{b},p,\varepsilon)$-holomorphy of \cref{eq:linftykernelderivative} implies that for all $\z\in\mathcal{O}_{\boldsymbol{\rho}}$ we have that 
$
\mathsf{b}_{\tau,\z}
$
is holomorphic, i.e., it holds
\begin{equation}
\lim_{|h|\to 0^+}
\frac{
\norm{
\mathsf{b}_{\tau,\z + h \boldsymbol{e}_j}
-
\mathsf{b}_{\tau,\z}
-
h\partial_{z_j}\mathsf{b}_{\tau,\z}
}_{L^{\infty}(\hat{\Gamma}\times\widetilde{\tau})}
}{
\snorm{h}
}
=0.
\end{equation}
From the linearity of the integral and in analogy to \cref{eq:boundAtzwithbtz} we obtain
\begin{align*}
&\norm{
\hat{\OA}_{\tau,\z + h \boldsymbol{e}_j}-\hat{\OA}_{\tau,\z}-h\partial_{z_j}\hat{\OA}_{\tau,\z}
}_{L^2(\hat{\Gamma})\to L^2(\hat{\Gamma})}\\
&\qquad\qquad\qquad\qquad\leq
C_\tau
\norm{
\mathsf{b}_{\tau,\z + h \boldsymbol{e}_j}
-
\mathsf{b}_{\tau,\z}
-
h\partial_{z_j}\mathsf{b}_{\tau,\z}
}_{L^{\infty}(\hat{\Gamma}\times\widetilde{\tau})},
\end{align*}
where $C_\tau>0$ does not depend on $h$, which implies
\begin{equation}
\lim_{|h|\to 0^+}
\frac{
\norm{
\hat{\OA}_{\tau,\z + h \boldsymbol{e}_j}-\hat{\OA}_{\tau,\z}-h\partial_{z_j}\hat{\OA}_{\tau,\z}
}_{L^2(\hat{\Gamma})\to L^2(\hat{\Gamma})}
}{
\snorm{h}
}
=0,
\end{equation}
i.e., that $\hat{\OA}_{\tau,\z}$ is holomorphic for each $\z\in\mathcal{O}_{\boldsymbol{\rho}}$.
Therefore, we have verified \cref{def:bpe_holomorphy2} in 
\cref{def:bpe_holomorphy} and \cref{eq:linftyAt}
is $(\boldsymbol{b},p,\varepsilon)$-holomorphic
for each $\tau \in \mathcal{G}$.
\\
{\bf Step \encircled{{\sf C}}: $(\boldsymbol{b},p,\varepsilon)$-holomorphy of $\mathcal{A}$.}
It remains to conclude that $\mathcal{A}$ is $(\boldsymbol{b},p,\varepsilon)$-holomorphic with $\varepsilon$ taken as the minimum $\varepsilon$ of all patches (whose number is finite),
thus rendering a strictly positive value.
It follows from \cref{eq:decomposition_A_panels} that
\begin{equation}
\mathcal{A}=\sum_{\tau\in\mathcal{G}}\mathcal{A}_{\tau}
\end{equation}
and since the addition of a finite number of holomorphic
maps preserves holomorphy, this yields the desired result.
\end{proof}

\subsection{Parametric Shape Holomorphy of BIOs with Singular Kernel}
\label{sec:parametric_holomorphy}
We now address the case in which the kernel function
${\sf a}: \Gamma_\y \times \IR^3\rightarrow \IC$
of \cref{eq:parametric_BIO} is allowed to have a singularity
in the second variable, however
we assume that is integrable. Let us define
\begin{align}
	\mathfrak{U} 
	\coloneqq 
	\{
		\bz \in \IC^3: 
		\Re\{\bz \cdot \bz\}>0
	\}.
\end{align}
%
\begin{theorem}\label{thm:holomorphy_sing_kernel}
Let \cref{assump:parametric_boundary} be fulfilled
with $\boldsymbol{b} \in \ell^p(\IN)$ and $p\in (0,1)$.
In addition, assume that
\begin{enumerate}
	\item\label{thm:holomorphy_sing_kernel1}
	There exists $\varepsilon>0$
	such that for each $\tau \in \mathcal{G}$,
	$\widetilde{\mathbf{u}} \in \widetilde{\tau}$
	and for all
	$\bz \in \mathfrak{U}$ 
	the map
	\begin{equation}\label{eq:bound_kernel}
		\mathbb{U}
		\ni
		\y
		\mapsto
		\mathsf{a}
		\left(
		\boldsymbol{r}_\y \circ \chi_\tau
		\left(
			\widetilde{\mathbf{u}}
		\right)
		,
		\bz
		\right)
		\in 
		\IC
	\end{equation}
	is $(\boldsymbol{b},p,\varepsilon)$-holomorphic and continuous.
	\item\label{thm:holomorphy_sing_kernel2}
	Let $\boldsymbol\rho\coloneqq (\rho_j)_{j\geq1}$ 
	be any $(\boldsymbol{b},\varepsilon)$-admissible 
	sequence of numbers strictly larger than one.
	For any $\z \in \mathcal{O}_{\boldsymbol{\rho}}$, $\widetilde{\mathbf{u}} \in \widetilde{\tau}$, and $\tau\in\mathcal{G}$, the map\footnote{We note the subtle difference in notation between the parametric variable $\z\in\mathbb{U}$ and the spatial variable $\bz\in\mathfrak{U}$.}
	\begin{equation}
		\mathfrak{U}
		\ni
		\bz
		\mapsto
		\mathsf{a}
		\left(
		\boldsymbol{r}_\z \circ \chi_\tau
		\left(
			\widetilde{\mathbf{u}}
		\right)
		,
		\bz
		\right)
		\in 
		\IC
	\end{equation}
	is holomorphic.
	\item\label{thm:holomorphy_sing_kernel3}
	Again, let $\boldsymbol\rho\coloneqq (\rho_j)_{j\geq1}$ 
	be any $(\boldsymbol{b},\varepsilon)$-admissible 
	sequence of numbers strictly larger than one.
	There exists $C_{\mathsf{a}}>0$ and $\nu \in (0,2)$
	such that for any $\tau \in \mathcal{G}$, $\widetilde{\mathbf{u}}\in \widetilde{\tau}$,
	and $\z \in \mathcal{O}_{\boldsymbol{\rho}}$
	one has for $\left(\hat{\bx},\hat{\by}\right)\in\left(\hat{\Gamma}\times\hat{\Gamma}\right)^\star$
	\begin{equation}\label{eq:bound_kernel_singular}
   		\snorm{
		\mathsf{a}
		\left(
		\boldsymbol{r}_\z \circ \chi_\tau
		\left(
			\hat{\by}
		\right)
		,
		\boldsymbol{r}_\z(\hat{\bx})
        		-
        		\boldsymbol{r}_\z(\hat{\by})
		\right)
		}
		\leq
		\frac{C_{\mathsf{a}}}{
		\norm{
		\boldsymbol{r}_\z(\hat{\bx})
      		 -
        		\boldsymbol{r}_\z(\hat{\by})
		}^{\nu}
		}.
	\end{equation}

\end{enumerate}

Then, the map
\begin{equation}\label{eq:bpe_hol_A_sing}
	\mathcal{A}:
	\mathbb{U}
	\rightarrow
	\mathscr{L}
	\left(
		L^2(\hat\Gamma)
		,
		L^2(\hat\Gamma)
	\right):
	\y
	\mapsto
	\hat{\mathsf{A}}_{\y}
\end{equation}
with $\hat{\OA}_\y$ as in \cref{eq:A_reference} is $(\boldsymbol{b},p,\varepsilon)$-holomorphic
and continuous.
\end{theorem}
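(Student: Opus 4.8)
The strategy is to reduce the singular-kernel case to the bounded-kernel case handled in \cref{thm:holomorphy_bounded_kernel} by a dyadic truncation-and-limit argument, exploiting \cref{lmm:herve} (uniform limits of holomorphic maps are holomorphic). Concretely, first I would fix a $(\boldsymbol{b},\varepsilon)$-admissible polyradius $\boldsymbol\rho$ and work on $\mathcal{O}_{\boldsymbol\rho}$. Using \cref{lmm:lipschitz_preservation}, the quantity $\Re\{(\boldsymbol{r}_{\z}(\hat\bx)-\boldsymbol{r}_{\z}(\hat\by))\cdot(\boldsymbol{r}_{\z}(\hat\bx)-\boldsymbol{r}_{\z}(\hat\by))\}$ is bounded below by $\eta\|\hat\bx-\hat\by\|^2>0$ uniformly in $\z\in\mathcal{O}_{\boldsymbol\rho}$; in particular $\boldsymbol{r}_{\z}(\hat\bx)-\boldsymbol{r}_{\z}(\hat\by)\in\mathfrak{U}$ whenever $\hat\bx\neq\hat\by$, so assumption \cref{thm:holomorphy_sing_kernel2} applies and the kernel evaluated along the deformed geometry is well-defined and holomorphic. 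Combined with the Lipschitz bound on $\boldsymbol{r}_{\z}$ from \cref{lmm:holomorphic_parametric_rep}, the singular bound \cref{eq:bound_kernel_singular} transfers to a bound of the form $C_{\mathsf{a}}\eta^{-\nu/2}\|\hat\bx-\hat\by\|^{-\nu}$ with $\nu\in[0,2)$, i.e. a weakly singular kernel integrable over the (two-dimensional) surface $\hat\Gamma$.

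Next I would introduce a cutoff. Pick a smooth (or merely measurable) family of truncations $\chi_\delta$ that vanishes on the diagonal strip $\{\|\hat\bx-\hat\by\|\le\delta\}$ and equals one outside $\{\|\hat\bx-\hat\by\|\le 2\delta\}$, and define the truncated kernel $\mathsf{a}_\delta\coloneqq\chi_\delta\,\mathsf{a}$. For each fixed $\delta>0$ the resulting kernel $\mathsf{k}_{\tau,\y}^{(\delta)}$ is \emph{bounded} (by $C_{\mathsf{a}}\eta^{-\nu/2}\delta^{-\nu}$) on $\hat\Gamma\times\tilde\tau$, is holomorphic in $\y$ by assumption \cref{thm:holomorphy_sing_kernel1} together with the holomorphy of $\boldsymbol{r}_{\y}$ and the chain rule, and stays away from the singularity so that the composition remains well-defined on all of $\mathcal{O}_{\boldsymbol\rho}$. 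Hence \cref{thm:holomorphy_bounded_kernel} applies verbatim: the associated operator map $\mathcal{A}^{(\delta)}:\y\mapsto\hat{\mathsf{A}}_{\y}^{(\delta)}$ is $(\boldsymbol{b},p,\varepsilon)$-holomorphic and continuous, with holomorphic complex extension $\z\mapsto\hat{\mathsf{A}}_{\z}^{(\delta)}$ on $\mathcal{O}_{\boldsymbol\rho}$.

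Then I would pass to the limit $\delta\to 0^+$. The key estimate is that $\hat{\mathsf{A}}_{\z}-\hat{\mathsf{A}}_{\z}^{(\delta)}$ has kernel supported in $\{\|\hat\bx-\hat\by\|\le 2\delta\}$ and dominated there by $C\|\hat\bx-\hat\by\|^{-\nu}$; applying Riesz--Thorin as in \cref{sec:rt_theorem}, the two $L^1\!\to\!L^1$ and $L^\infty\!\to\!L^\infty$ norms are both bounded by $\sup_{\hat\bx}\int_{\|\hat\bx-\hat\by\|\le 2\delta}C\|\hat\bx-\hat\by\|^{-\nu}\,\mathrm{d}s_{\hat\by}\le C'\delta^{2-\nu}\to 0$, uniformly in $\z\in\mathcal{O}_{\boldsymbol\rho}$ (the uniformity comes from the uniform lower bound $\eta$ and the uniform Lipschitz bounds on $\boldsymbol{r}_{\z}$ and on the Gramians, \cref{eq:gramianbound}). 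Thus $\hat{\mathsf{A}}_{\z}^{(\delta)}\to\hat{\mathsf{A}}_{\z}$ uniformly on $\mathcal{O}_{\boldsymbol\rho}$ in the operator norm, so by \cref{lmm:herve} the limit $\z\mapsto\hat{\mathsf{A}}_{\z}$ is holomorphic on $\mathcal{O}_{\boldsymbol\rho}$, and continuity in $\y$ follows likewise by uniform convergence. The uniform bound \cref{eq:bpe_hol_bound_epsilon} on $\mathcal{E}_{\boldsymbol\rho}\subset\mathcal{O}_{\boldsymbol\rho}$ and the uniform bound \cref{def:bpe_holomorphy1} on $\mathbb{U}$ are read off directly from $\sup_{\z}\|\hat{\mathsf{A}}_{\z}\|\le C_{\mathsf{a}}\eta^{-\nu/2}\sup_{\hat\bx}\int_{\hat\Gamma}\|\hat\bx-\hat\by\|^{-\nu}\,\mathrm{d}s_{\hat\by}$ via Riesz--Thorin, which is finite since $\nu<2=\dim\hat\Gamma$. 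Since $\boldsymbol\rho$ was an arbitrary admissible polyradius, all three conditions of \cref{def:bpe_holomorphy} hold, proving $(\boldsymbol{b},p,\varepsilon)$-holomorphy of $\mathcal{A}$.

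The main obstacle I anticipate is making the truncation argument fully rigorous at the level of the panelwise decomposition: one must check that the cutoff $\chi_\delta$, pulled back through the chart maps $\chi_\tau$ and the deformation $\boldsymbol{r}_{\z}$, still produces a kernel to which \cref{thm:holomorphy_bounded_kernel} literally applies — in particular that $\mathsf{k}_{\tau,\z}^{(\delta)}\in L^\infty(\hat\Gamma\times\tilde\tau)$ depends holomorphically on $\z$ with the \emph{uniform} bounds needed for Cauchy's estimate — and that the two-dimensional Hausdorff measure estimate $|\{\hat\by\in\hat\Gamma:\|\hat\bx-\hat\by\|\le 2\delta\}|\lesssim\delta^2$ holds with a constant independent of $\hat\bx\in\hat\Gamma$, which is where Lipschitz regularity of $\hat\Gamma$ (uniform bi-Lipschitz charts, bounded number of patches) is essential.
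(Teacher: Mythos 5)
Your proposal is correct and follows essentially the same route as the paper's proof: localize to patches, bound the singular kernel via \cref{lmm:lipschitz_preservation} and Riesz--Thorin, regularize by a cutoff near the diagonal (your $\delta$-truncation playing the role of the paper's $\phi(n\|\hat{\bx}-\chi_\tau(\tilde{\mathbf{u}})\|^\nu)$), apply \cref{thm:holomorphy_bounded_kernel} to the truncated operators, prove uniform operator-norm convergence on $\mathcal{O}_{\boldsymbol{\rho}}$ at rate $\delta^{2-\nu}$, and conclude with \cref{lmm:herve}. The obstacle you flag at the end -- upgrading pointwise holomorphy of the truncated kernel to $(\boldsymbol{b},p,\varepsilon)$-holomorphy as an $L^\infty(\hat{\Gamma}\times\tilde{\tau})$-valued map via uniform Cauchy estimates, with $\varepsilon$ independent of the truncation parameter -- is precisely what the paper carries out in its Step B, so your outline is complete in substance.
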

For the proof of the theorem, we follow the general strategy of the proof of
\cref{thm:holomorphy_bounded_kernel} and make the necessary modifications
for singular kernel functions.


\begin{proof}[Proof of \cref{thm:holomorphy_sing_kernel}]
As per usual, throughout this proof
let $\boldsymbol\rho\coloneqq (\rho_j)_{j\geq1}$ 
be any sequence of numbers strictly larger than
one that is $(\boldsymbol{b},\varepsilon)$-admissible 
for some $\varepsilon>0$.\\

{\bf Step \encircled{\sf A}: Localization of $\hat{\OA}_\y$ to a patch.}
In analogy to the proof of \cref{thm:holomorphy_bounded_kernel} we set
\begin{align}\label{eq:singulark}
	\mathsf{k}_{\tau,\y}
	\left(
		\hat{\bx}
		,
		\widetilde{\mathbf{u}}
	\right)
	\coloneqq
	\mathsf{a}
	\left(
		\boldsymbol{r}_\y \circ \chi_\tau
		\left(
			\widetilde{\mathbf{u}}
		\right)
		,
		\boldsymbol{r}_\y(\hat{\mathbf{x}})
		-
		\boldsymbol{r}_\y \circ \chi_\tau
		\left(
			\widetilde{\mathbf{u}}
		\right)
	\right)
\end{align}
and
\begin{equation}
	\mathsf{b}_{\tau,\z}
	\left(
		\hat{\bx}
		,
		\widetilde{\mathbf{u}}
	\right)
	\coloneqq
	\mathsf{k}_{\tau,\z}
	\left(
		\hat{\bx}
		,
		\widetilde{\mathbf{u}}
	\right)
	\sqrt{g_{\tau,\z}(\widetilde{\mathbf{u}})}
\end{equation}
for each $(\hat{\bx},\widetilde{\mathbf{u}})\in\hat{\Gamma} \times \widetilde{\tau}$
with $\hat{\bx} \neq \chi_{\tau}(\widetilde{\mathbf{u}})$,
$\tau \in \mathcal{G}$, and $\z \in \mathcal{O}_{\boldsymbol{\rho}}$ and define 
\begin{equation}
	\left(
		\hat{\OA}_{\tau,\z}
		\,
		\hat{\varphi} 
	\right)
	\left(
		\hat{\bx}
	\right)
	\coloneqq
	\int\limits_{\widetilde{\tau}} 
	\mathsf{b}_{\tau,\z}
	(\hat\bx,\hat{{\bf u}})
	\left(
		\hat{\varphi}
		\circ
		\chi_\tau
	\right)
	(\widetilde{\mathbf{u}})
	\,
	\text{d} 
	\widetilde{\mathbf{u}},
\end{equation}
for $\hat{\varphi} \in L^2(\hat\Gamma)$. 
Using the Riesz-Thorin interpolation introduced in 
\cref{sec:rt_theorem} we obtain for each $\tau \in \mathcal{G}$
\begin{equation}
	\norm{
		\hat{\OA}_{\tau,\z}
	}_{\mathscr{L}\left(L^2(\hat{\Gamma}),L^2(\hat{\Gamma})\right)}
   	\leq
    	\norm{
		\hat{\OA}_{\tau,\z}
	}_{\mathscr{L}\left(L^1(\hat{\Gamma}), L^1(\hat{\Gamma})\right)}^{\half}
    	\norm{
		\hat{\OA}_{\tau,\z}
	}_{\mathscr{L}\left(L^\infty(\hat{\Gamma}), L^\infty(\hat{\Gamma})\right)}^{\half}.
\end{equation}
\cref{lmm:lipschitz_preservation} yields for each 
$\tau \in \mathcal{G}$ and
for any $\z \in \mathcal{O}_{\boldsymbol{\rho}}$
\begin{equation}\label{eq:geteta}
\begin{aligned}
	\norm{
		\boldsymbol{r}_{\z}(\hat\bx)
		-
		\boldsymbol{r}_\z \circ \chi_\tau \left( \widetilde{\mathbf{u}}\right)
	}^2
	&
	\geq	
	\Re\{
		\left(
			\boldsymbol{r}_{\z}(\hat\bx)
			-
			\boldsymbol{r}_\z \circ \chi_\tau \left( \widetilde{\mathbf{u}}\right)
		\right)
		\cdot
		\left(
			\boldsymbol{r}_{\z}(\hat\bx)
			-
			\boldsymbol{r}_{\z} \circ \chi_\tau \left( \widetilde{\mathbf{u}}\right)
		\right)
	\} \\
	&
	\geq
	\eta
	\norm{
		\hat\bx
		-
		\chi_\tau \left( \widetilde{\mathbf{u}}\right)
	}^2\\
    &
	>0.
\end{aligned}
\end{equation}
This, together with \cref{thm:holomorphy_sing_kernel3} from the assumptions, yields
\begin{equation}
\begin{aligned}
	\snorm{
		\mathsf{b}_{\tau,\z}
		(\hat\bx,\hat{\bf u}) 
	}
	&
	\leq
	\frac{
		C_{\mathsf{a}}
	}{
		\norm{
			\boldsymbol{r}_{\boldsymbol{z}}(\hat\bx)
			-
			\boldsymbol{r}_\z \circ \chi_\tau \left( \widetilde{\mathbf{u}}\right)
		}^\nu
	}
	\snorm{
		\sqrt{g_{\tau,\z}(\widetilde{\mathbf{u}})}
	}
	\\
	&
    	\leq
 	\frac{
	\widetilde{C}_{\mathsf{a}}
	}{
	\norm{
		\hat\bx
		-
		\chi_\tau \left( \widetilde{\mathbf{u}}\right)
	}^\nu
	}
	\snorm{
		\sqrt{g_{\tau,\z}(\widetilde{\mathbf{u}})}
	},
\end{aligned}
\end{equation}
with $\widetilde{C}_{\mathsf{a}} = C_{\mathsf{a}}\eta^{-\nu/2}$ and for any
$(\hat\bx,\hat{\bf u}) \in \hat{\Gamma} \times \widetilde{\tau}$
with $\hat\bx \neq \chi_\tau \left( \widetilde{\mathbf{u}}\right)$. 
Thus, for each $\z\in\mathcal{O}_{\boldsymbol\rho}$, it holds
\begin{equation}
\begin{aligned}
   	 \norm{
	 	\hat{\OA}_{\tau,\z}
	}_{\mathscr{L}\left(L^1(\hat{\Gamma}),L^1(\hat{\Gamma})\right)}
    	&
	=
  	\underset{\widetilde{\mathbf{u}} \in \widetilde{\tau}}{\operatorname{ess} \sup} 
	\int\limits_{\hat\Gamma}
	\snorm{
	\mathsf{b}_{\tau,\z}
	\left(
		\hat{\bx}
		,
		\widetilde{\mathbf{u}}
	\right)
	}	
	\text{d} \text{s}_{\hat{\mathbf{x}}}\\
	&\leq
	\widetilde{C}_{\mathsf{a}}
	\,
	\displaystyle
	\underset{\widetilde{\mathbf{u}} \in \widetilde{\tau}}{\operatorname{ess} \sup} 
	\snorm{
		\sqrt{g_{\tau,\z}(\widetilde{\mathbf{u}})}
	}\,
      	\underset{\widetilde{\mathbf{u}} \in \widetilde{\tau}}{\operatorname{ess} \sup} 
	\int\limits_{\hat\Gamma}
	\frac{
		1
	}{
	\norm{
		\hat\bx
		-
		\chi_\tau \left( \widetilde{\mathbf{u}}\right)
	}^\nu
	}
   	 \text{d} \text{s}_{\hat{\mathbf{x}}},
\end{aligned}
\end{equation}
where the latter integral is finite for any
$\widetilde{\mathbf{u}}\in\widetilde{\tau}$ since
$\nu \in (0,2)$. 

Furthermore, for each $\z\in\mathcal{O}_{\boldsymbol\rho}$, it holds
\begin{equation}
\begin{aligned}
   	\norm{
		\hat{\OA}_{\tau,\z}
	}_{\mathscr{L}\left(L^\infty(\hat{\Gamma}), L^\infty(\hat{\Gamma})\right)}
   	&
	=
   	\underset{\hat{\mathbf{x}} \in \hat\Gamma}{\operatorname{ess} \sup} 
	\int\limits_{\widetilde{\tau}}
	\snorm{
	\mathsf{b}_{\tau,\z}
	\left(
		\hat{\bx}
		,
		\widetilde{\mathbf{u}}
	\right)
	}	
	\text{d} 
	\widetilde{\mathbf{u}}\\
	&\leq
	\widetilde{C}_{\mathsf{a}}
	\underset{\hat{\bf u} \in \widetilde{\tau}}{\operatorname{ess} \sup}
	\snorm{
		\sqrt{g_{\tau,\z}(\widetilde{\mathbf{u}})}
	}\,
   	\underset{\hat{\mathbf{x}} \in \hat\Gamma}{\operatorname{ess} \sup} 
	\int\limits_{\widetilde{\tau}}
	\frac{
		1
	}{
	\norm{
		\hat\bx
		-
		\chi_\tau \left( \widetilde{\mathbf{u}}\right)
	}^\nu
	}
 	\text{d} 
	\widetilde{\mathbf{u}}\\
	&\leq
	\widetilde{C}_{\mathsf{a}}
	\underset{\hat{\bf u} \in \widetilde{\tau}}{\operatorname{ess} \sup}
	\snorm{
		\sqrt{g_{\tau,\z}(\widetilde{\mathbf{u}})}
	}\,
   	\underset{\hat{\mathbf{x}} \in \hat\Gamma}{\operatorname{ess} \sup} 
	\int\limits_{\hat\Gamma}
	\frac{
		1
	}{
	\norm{
		\hat\bx
		-
		\hat{\by}
	}^\nu
	}
    \text{d}\text{s}_{\hat{\by}},
\end{aligned}
\end{equation}
where the assumptions on $\nu$ imply again that latter integral is
uniformly bounded for all $\hat{\bx}\in\hat{\Gamma}$.
This shows that $\hat{\OA}_{\tau,\z}\colon L^2(\hat{\Gamma})\to L^2(\hat{\Gamma})$
is a bounded linear operator and allows us to define for each ${\tau} \in 
\mathcal{G}$
\begin{equation}\label{eq:singularAtau}
	\mathcal{A}_\tau:
	\mathcal{O}_{\boldsymbol{\rho}}
	\rightarrow
	\mathscr{L}
	\left(
		L^2(\hat\Gamma)
		,
		L^2(\hat\Gamma)
	\right):
	\z
	\mapsto
	\hat{\mathsf{A}}_{\tau,\z}.
\end{equation}

{\bf Step \encircled{\sf A'}: Regularization of localized operators $\hat{\OA}_{\tau,\z}$.}
Let $\phi :[0,\infty)\rightarrow \IR$ be a $\mathscr{C}^1([0,\infty))$ function
with
\begin{equation}
\phi(t)
\begin{cases}
=0&t\in[0,\frac{1}{2}],\\
\in[0,1]& t\in(\frac{1}{2},1),\\
=1&t\in[1,\infty).
\end{cases}
\end{equation}
For each $n\in \IN$, $\tau \in \mathcal{G}$, and $\z\in \mathcal{O}_{\boldsymbol{\rho}}$ define
\begin{equation}\label{eq:btauzn}
	\mathsf{b}^{(n)}_{\tau,\z}
	\left(
		\hat{\bx}
		,
		\widetilde{\mathbf{u}}
	\right)
	\coloneqq
	\mathsf{b}_{\tau,\z}
    \left(
		\hat{\bx}
		,
		\widetilde{\mathbf{u}}
	\right)
	\phi(n
		\norm{\hat{\mathbf{x}}-\chi_\tau
		\left(
			\widetilde{\mathbf{u}}
		\right)}^\nu )
\end{equation}
and
\begin{equation}
	\left(
		\hat{\OA}^{(n)}_{\tau,\z}
		\,
		\hat{\varphi} 
	\right)
	\left(
		\hat{\bx}
	\right)
	\coloneqq
	\int\limits_{\widetilde{\tau}} 
	\mathsf{b}^{(n)}_{\tau,\z}
	\left(
		\hat{\bx}
		,
		\widetilde{\mathbf{u}}
	\right)
	\left(
		\hat{\varphi}
		\circ
		\chi_\tau
	\right)
	(\widetilde{\mathbf{u}}) 
	\,
	\text{d} 
	\widetilde{\mathbf{u}},
	\quad
	\hat\bx\in\hat\Gamma,
\end{equation}
for all $\hat{\varphi} \in L^2(\hat\Gamma)$. Using
the Riesz-Thorin interpolation theorem \cref{eq:rieszthorin}
we obtain
\begin{equation}\label{eq:rt_convergence}
\begin{aligned}
	&
	\norm{
		\hat{\OA}_{\tau,\z}-\hat{\OA}_{\tau,\z}^{(n)}
	}_{\mathscr{L}(L^2(\hat{\Gamma}),L^2(\hat{\Gamma}))}
	\\
	&
	\hspace{1.5cm}
	\leq
	\norm{
		\hat{\OA}_{\tau,\z}-\hat{\OA}_{\tau,\z}^{(n)}
	}_{\mathscr{L}(L^1(\hat{\Gamma}),L^1(\hat{\Gamma}))}^{\half}
	\norm{
		\hat{\OA}_{\tau,\z}-\hat{\OA}_{\tau,\z}^{(n)}
	}_{\mathscr{L}(L^\infty(\hat{\Gamma}),L^\infty(\hat{\Gamma}))}^{\half}.
\end{aligned}
\end{equation}
To estimate this quantity, we note that for $\hat\bx\in\hat\Gamma$
it holds
\begin{equation}
\begin{aligned}
	&\left(
		\hat{\OA}_{\tau,\z}
		\,
		\hat{\varphi} 
	\right)
	\left(
		\hat{\bx}
	\right)
	-
	\left(
		\hat{\OA}^{(n)}_{\tau,\z}
		\,
		\hat{\varphi} 
	\right)
	\left(
		\hat{\bx}
	\right)\\
	&\qquad\qquad\qquad=
	\int\limits_{\widetilde{\tau}} 
	\mathsf{b}_{\tau,\z}
	\left(
		\hat{\bx}
		,
		\widetilde{\mathbf{u}}
	\right)
	\left(
		1
		-
		\phi(
			n
			\norm{\hat{\mathbf{x}}
			-
			\chi_\tau
			\left(
				\widetilde{\mathbf{u}}
			\right)}^\nu 
		)
	\right)
	\left(
		\hat{\varphi}
		\circ
		\chi_\tau
	\right)
	(\widetilde{\mathbf{u}}) 
	\,
	\text{d} 
	\widetilde{\mathbf{u}}.
\end{aligned}
\end{equation}
Therein, along the lines of step \encircled{\sf A}, the integrand can
be estimated as
\begin{equation}
\begin{aligned}
\snorm{
	\mathsf{b}_{\tau,\z}
	\left(
		\hat{\bx}
		,
		\widetilde{\mathbf{u}}
	\right)
	\left(
		1
		-
		\phi(
			n
			\norm{\hat{\mathbf{x}}
			-
			\chi_\tau
			\left(
				\widetilde{\mathbf{u}}
			\right)}^\nu 
		)
	\right)}
    	\leq
	\widetilde{C}_{\mathsf{a}}
    	\frac{
	\left(
		1
		-
		\phi(
			n
			\norm{\hat{\mathbf{x}}
			-
			\chi_\tau
			\left(
				\widetilde{\mathbf{u}}
			\right)}^\nu 
		)
    	\right)
	}{
	\norm{
		\hat\bx
		-
		\chi_\tau \left( \widetilde{\mathbf{u}}\right)
	}^\nu
	}
	\snorm{
		\sqrt{g_{\tau,\z}(\widetilde{\mathbf{u}})}
	},
\end{aligned}
\end{equation}
for $(\hat\bx,\hat{\bf u}) \in \hat{\Gamma} \times \widetilde{\tau}$
with $\hat\bx \neq \chi_\tau \left( \widetilde{\mathbf{u}}\right)$.
Thus, it holds
\begin{equation}\label{eq:aprimeintegral1}
\begin{aligned}
    	&
	\norm{
		\hat{\OA}_{\tau,\z}-\hat{\OA}_{\tau,\z}^{(n)}
	}_{\mathscr{L}\left(L^1(\hat{\Gamma}), L^1(\hat{\Gamma})\right)}\\
    	&\qquad\qquad
	=
	\underset{\widetilde{\mathbf{u}} \in \widetilde{\tau}}{\operatorname{ess} \sup} 
	\int\limits_{\hat\Gamma}
	\snorm{
	\mathsf{b}_{\tau,\z}
	\left(
		\hat{\bx}
		,
		\widetilde{\mathbf{u}}
	\right)
	}
	\left(
		1
		-
		\phi(
			n
			\norm{\hat{\mathbf{x}}
			-
			\chi_\tau
			\left(
				\widetilde{\mathbf{u}}
			\right)}^\nu 
		)
	\right)	
	\text{ds}_{
	\hat{\mathbf{x}}
	}
	\\
	&\qquad\qquad
	\leq
	\widetilde{C}_{\mathsf{a}}
	\underset{\widetilde{\mathbf{u}} \in \widetilde{\tau}}{\operatorname{ess} \sup} 
	\snorm{
		\sqrt{g_{\tau,\z}(\widetilde{\mathbf{u}})}
	}
	\,
	\underset{\hat{\by} \in \hat\Gamma}{\operatorname{ess} \sup} 
	\int\limits_{\hat\Gamma}
	\frac{
		1
		-
		\phi(
			n
			\norm{\hat{\mathbf{x}}
			-
			\hat{\mathbf{y}}
			}^\nu 
		)
	}{
	\norm{
		\hat{\mathbf{x}}
		-
		\hat{\mathbf{y}}
	}^\nu
	}	
	\text{ds}_{
	\hat{\mathbf{x}}
	}.
\end{aligned}
\end{equation}
On the other hand, it holds
\begin{equation}\label{eq:aprimeintegral2}
\begin{aligned}
    	&
	\norm{
		\hat{\OA}_{\tau,\z}-\hat{\OA}_{\tau,\z}^{(n)}
	}_{\mathscr{L}\left(L^{\infty}(\hat{\Gamma}), L^{\infty}(\hat{\Gamma})\right)}\\
	&\qquad\qquad\leq\underset{\hat{\bx} \in \hat\Gamma}{\operatorname{ess} \sup} \int\limits_{\widetilde{\tau}} 
	\snorm{
	\mathsf{b}_{\tau,\z}
	\left(
		\hat{\bx}
		,
		\widetilde{\mathbf{u}}
	\right)
	\left(
		1
		-
		\phi(
			n
			\norm{\hat{\mathbf{x}}
			-
			\chi_\tau
			\left(
				\widetilde{\mathbf{u}}
			\right)}^\nu 
		)
	\right)
	}
	\text{d} 
	\widetilde{\mathbf{u}}\\
	&\qquad\qquad
	\leq
	\widetilde{C}_{\mathsf{a}}
	\underset{\widetilde{\mathbf{u}} \in \widetilde{\tau}}{\operatorname{ess} \sup} 
	\snorm{
		\sqrt{g_{\tau,\z}(\widetilde{\mathbf{u}})}
	}
   	\,
	\underset{\hat{\bx} \in \hat\Gamma}{\operatorname{ess} \sup}
	\int\limits_{\widetilde{\tau}}
	\frac{
		1
		-
		\phi(
			n
			\norm{\hat{\mathbf{x}}
			-
			\chi_\tau
			\left(
				\widetilde{\mathbf{u}}
			\right)}^\nu 
		)
	}{
	\norm{
		\hat\bx
		-
		\chi_\tau \left( \widetilde{\mathbf{u}}\right)
	}^\nu
	}
	\text{d} 
	\widetilde{\mathbf{u}}
	\\
	&\qquad\qquad
	\leq
	\widetilde{C}_{\mathsf{a}}
	\underset{\widetilde{\mathbf{u}} \in \widetilde{\tau}}{\operatorname{ess} \sup} 
	\snorm{
		\sqrt{g_{\tau,\z}(\widetilde{\mathbf{u}})}
	}
   	\underset{\hat{\bx} \in \hat\Gamma}{\operatorname{ess} \sup}
	\int\limits_{\hat{\Gamma}}
	\frac{
		1
		-
		\phi(
			n
			\norm{\hat{\mathbf{x}}
			-
			\hat{\by}}^\nu 
		)
	}{
	\norm{
		\hat\bx
		-
		\hat{\by}
	}^\nu
	}
	\text{d} \text{s}_{\hat{\by}}.
\end{aligned}
\end{equation}
Upon noting that the integrals in \cref{eq:aprimeintegral1} and \cref{eq:aprimeintegral2}
are equal, it remains to estimate the essential supremum of this integral.
Let us calculate
\begin{equation}
\begin{aligned}
    \underset{\hat{\bx} \in \hat\Gamma}{\operatorname{ess} \sup}
	\int\limits_{\hat{\Gamma}}
	\frac{
		1
		-
		\phi(
			n
			\norm{\hat{\mathbf{x}}
			-
			\hat{\by}}^\nu 
		)
	}{
	\norm{
		\hat\bx
		-
		\hat{\by}
	}^\nu
	}
	\text{d} \text{s}_{\hat{\by}}
    &\leq
    \underset{\hat{\bx} \in \hat\Gamma}{\operatorname{ess} \sup}
	\int\limits_{\hat{\Gamma}\cap B\left(\hat{\bx},n^{-\frac{1}{\nu}}\right)}
	\frac{
		1
	}{
	\norm{
		\hat\bx
		-
		\hat{\by}
	}^\nu
	}
	\text{d} \text{s}_{\hat{\by}}\\
    &\leq
    C\int_{B\left(\hat{\bx},n^{-\frac{1}{\nu}}\right)}
    \frac{1}{\|\widetilde{\mathbf{u}}\|^\nu}
    \text{d}\widetilde{\mathbf{u}}\\
    &
    \leq C
\int\limits_{0}^{n^{-\frac{1}{\nu}}}
	\int\limits_{0}^{2\pi}
    r^{1-\nu}
	\text{d} 
	r
	\text{d} 
	\theta\\
    &
    \leq C
    \frac{2\pi}{2-\nu}n^{\frac{\nu-2}{\nu}},
\end{aligned}
\end{equation}
where the constant $C>0$ depends only on $\hat\Gamma$,
and not on $n$ or $\nu$.

Next, recalling \cref{eq:rt_convergence} we get
\begin{equation}\label{eq:unif_conv}
	\sup_{
		\z 
		\in 
		\mathcal{O}_{\boldsymbol{\rho}}
	}
	\norm{
		\hat{\OA}_{\tau,\z}-\hat{\OA}_{\tau,\z}^{(n)}
	}_{\mathscr{L}\left(L^2(\hat{\Gamma}), L^2(\hat{\Gamma})\right)}
	\leq
	C
	\widetilde{C}_{\mathsf{a}}
	\frac{2\pi}{2-\nu}
	\underbrace{
	\sup_{
		\z 
		\in 
		\mathcal{O}_{\boldsymbol{\rho}}
	}
	\underset{\widetilde{\mathbf{u}} \in \widetilde{\tau}}{\operatorname{ess} \sup} 
	\snorm{
		\sqrt{g_{\tau,\z}(\widetilde{\mathbf{u}})}
	}
	}_{(\spadesuit)}
	n^{\frac{\nu-2}{\nu}}.
\end{equation}
As a consequence of \cref{lemma:gramian_holomorphic},
the term $(\spadesuit)$ in \eqref{eq:unif_conv} is
bounded. 
Recalling that we have assumed $\nu \in (0,2)$ implies
\begin{equation}
	\lim_{n\rightarrow \infty}
	\sup_{
		\z 
		\in 
		\mathcal{O}_{\boldsymbol{\rho}}
	}
	\norm{
		\hat{\OA}_{\tau,\z}-\hat{\OA}_{\tau,\z}^{(n)}
	}_{\mathscr{L}\left(L^2(\hat{\Gamma}), L^2(\hat{\Gamma})\right)}
	=
	0.
\end{equation}
This implies that for each $\tau\in\mathcal{G}$
\begin{equation}\label{eq:Ataun}
	\mathcal{A}_\tau^{(n)}:
	\mathcal{O}_{\boldsymbol{\rho}}
	\rightarrow
	\mathscr{L}
	\left(
		L^2(\hat\Gamma)
		,
		L^2(\hat\Gamma)
	\right):
	\z
	\mapsto
	\hat{\mathsf{A}}_{\tau,\z}^{(n)}
\end{equation}
converges uniformly in $\mathcal{O}_{\boldsymbol{\rho}}$ 
to $\mathcal{A}_\tau$ from \cref{eq:singularAtau}.

{\bf Step \encircled{\sf B}: $(\boldsymbol{b},p,\varepsilon)$-holomorphy of localized, regularized operators $\hat{\OA}_{\tau,\y}^{(n)}$}.
We aim to verify the assumptions of \cref{thm:holomorphy_bounded_kernel}
to show that
\begin{equation}\label{eq:Ataunr}
	\mathcal{A}_\tau^{(n)}:
	\mathbb{U}
	\rightarrow
	\mathscr{L}
	\left(
		L^2(\hat\Gamma)
		,
		L^2(\hat\Gamma)
	\right):
	\y
	\mapsto
	\hat{\mathsf{A}}_{\tau,\y}^{(n)}
\end{equation}
is $(\boldsymbol{b},p,\varepsilon)$-holomorphic
with $\varepsilon>0$ \emph{independent} of $n \in \IN$.
Thus, by setting
\begin{equation}
	\mathsf{k}_{\tau,\y}^{(n)}
	\left(
		\hat{\bx}
		,
		\widetilde{\mathbf{u}}
	\right)
	\coloneqq
	\mathsf{k}_{\tau,\y}
	\left(
		\hat{\bx}
		,
		\widetilde{\mathbf{u}}
	\right)\phi(
		n
		\norm{\hat{\mathbf{x}}-\chi_\tau
		\left(
			\widetilde{\mathbf{u}}
		\right)}^\nu )
\end{equation}
with $\mathsf{k}_{\tau,\y}$ from \cref{eq:singulark} we need to show that
\begin{equation}\label{eq:ktaunr}
	\mathbb{U}
	\ni
	\y
	\mapsto
	\mathsf{k}_{\tau,\y}^{(n)}
	\in
	L^{\infty}
	\left(
		\hat{\Gamma} \times \widetilde{\tau}
	\right)
\end{equation}
is $(\boldsymbol{b},p,\varepsilon)$-holomorphic
with $\varepsilon>0$, again,  independent of $n$ and continuous.
To this end, due to \cref{eq:geteta}, for each fixed
$(\hat\bx,\hat{\bf u}) \in \hat{\Gamma} \times \widetilde{\tau}$ such
that $\bx \neq 	\chi_\tau\left(\widetilde{\mathbf{u}} \right)$
and any $\z \in \mathcal{O}_{\boldsymbol{\rho}}$ it holds
\begin{equation}
	\left(
		\boldsymbol{r}_{\z}(\hat\bx)
		-
		\boldsymbol{r}_\z \circ \chi_\tau \left( \widetilde{\mathbf{u}}\right)
	\right)
	\cdot
	\left(
		\boldsymbol{r}_{\z}(\hat\bx)
		-
		\boldsymbol{r}_{\z} \circ \chi_\tau \left( \widetilde{\mathbf{u}}\right)
	\right)
	\in 
	\mathfrak{U}.
\end{equation}
Thus, due to \cref{thm:holomorphy_sing_kernel3} in the assumptions
and \cref{eq:geteta},
it follows that
\begin{equation}
\begin{aligned}
	\sup_{\z\in \mathcal{O}_{\boldsymbol{\rho}}}
	\snorm{
		\mathsf{k}^{(n)}_{\tau,\z}
		(\hat\bx,\hat{\bf u}) 
	}
	&\leq
    	C_{\mathsf{a}}
    	\sup_{\z\in \mathcal{O}_{\boldsymbol{\rho}}}
	\frac{\phi(
		n
		\norm{\hat{\mathbf{x}}-\chi_\tau
		\left(
			\widetilde{\mathbf{u}}
		\right)}^\nu )
	}{
		\norm{
			\boldsymbol{r}_{\boldsymbol{z}}(\hat\bx)
			-
			\boldsymbol{r}_\y \circ \chi_\tau \left( \widetilde{\mathbf{u}}\right)
		}^\nu
	}\\
    &\leq
    \widetilde{C}_{\mathsf{a}}
    \frac{\phi(
    		n
    		\norm{\hat{\mathbf{x}}-\chi_\tau
		\left(
			\widetilde{\mathbf{u}}
		\right)}^\nu )}{\norm{\hat{\bx}-\chi_\tau(\widetilde{\mathbf{u}})}^\nu}\\
    &\leq
    \widetilde{C}_{\mathsf{a}}
	n
	\sup_{t\in [0,\infty)}
	\snorm{\phi'(t)}.
 \end{aligned}
\end{equation}
Thus,
\begin{equation}\label{eq:kernel_n_bound}
	\sup_{\z\in \mathcal{O}_{\boldsymbol{\rho}}}\norm{
		\mathsf{k}^{(n)}_{\tau,\z}
	}_{L^\infty( \hat{\Gamma} \times \widetilde{\tau})}
	\leq
	\widetilde{C}_{\mathsf{a}}
	n
	\sup_{t\in [0,\infty)}
	\snorm{\phi'(t)}.
\end{equation}
This allows to verify \cref{def:bpe_holomorphy1} and
\cref{def:bpe_holomorphy3} in the \cref{def:bpe_holomorphy} of 
$(\boldsymbol{b},p,\varepsilon)$-holomorphy 
and it only remains
to verify \cref{def:bpe_holomorphy2}.

Unfortunately, we cannot proceed as in step \encircled{\sf B} of the proof of
\cref{thm:holomorphy_bounded_kernel} as
$\mathsf{k}_{\tau,\z}\notin L^{\infty}\left(\hat{\Gamma}\times\widetilde{\tau}\right)$
due to the singularity of the kernel function and, 
thus, $\mathcal{O}_{\boldsymbol{\rho}} \ni \z\mapsto\mathsf{k}_{\tau,\z} \in L^{\infty}\left(\hat{\Gamma}\times\widetilde{\tau}\right)$
is not a holomorphic map.
Instead, we observe that
for each $(\hat\bx,\hat{\bf u}) \in \hat{\Gamma} \times \widetilde{\tau}$ the map
$\mathbb{U}\ni \y \mapsto \boldsymbol{r}_\y(\hat{\mathbf{x}}) -
\boldsymbol{r}_\y \circ \chi_\tau \left( \widetilde{\mathbf{u}}\right) \in \IC$
is $(\boldsymbol{b},p,\varepsilon)$-holomorphic 
as a consequence of \cref{lmm:holomorphic_parametric_rep},
with $\varepsilon>0$ independent of
$(\hat\bx,\hat{\bf u}) \in \hat{\Gamma} \times \widetilde{\tau}$.
Consequently, recalling \cref{thm:holomorphy_sing_kernel1} and
\cref{thm:holomorphy_sing_kernel2} in the assumptions,
we may conclude that for each fixed
$(\hat\bx,\hat{\bf u}) \in \hat{\Gamma} \times \widetilde{\tau}$ such
$\hat\bx \neq 	\chi_\tau\left(\widetilde{\mathbf{u}} \right)$
that the map
\begin{align}
	\mathbb{U}
	\ni
	\y
	\mapsto
	\mathsf{k}_{\tau,\y}(\hat\bx,\hat{\bf u})
	\in 
	\IC
\end{align}
is $(\boldsymbol{b},p,\varepsilon)$-holomorphic
for some $\varepsilon>0$ that does not depend on 
$(\hat\bx,\hat{\bf u}) \in \hat{\Gamma} \times \widetilde{\tau}$.
To show parametric holomorphy on $\hat{\Gamma} \times \widetilde{\tau}$,
let $\z \in \mathcal{O}_{\boldsymbol{\rho}}$, $j\in \IN$
and consider $h \in \IC$ small enough such that 
$\z + h \boldsymbol{e}_j \in \mathcal{O}_{\boldsymbol{\rho}}$. 
Using Taylor's expansion in the direction $z_j$ we get
\begin{equation}\label{eq:singktaylor}
\begin{aligned}
	\mathsf{k}^{(n)}_{\tau,\z + h \boldsymbol{e}_j}
	(\hat\bx,\hat{\bf u})
	&=
	\mathsf{k}^{(n)}_{\tau,\z}
	(\hat\bx,\hat{\bf u})
	+
	h
	\left(
		\partial_{z_j}
		\mathsf{k}_{\tau,\z}
	\right)\!
	(\hat\bx,\hat{\bf u})
	\phi(
		n
		\norm{\hat{\mathbf{x}}-\chi_\tau
	\left(\widetilde{\mathbf{u}}\right)}^\nu)\\
	&
	\qquad+
	h^2
	\int\limits_{0}^{1}
	(1-\eta)
	\left(
		\partial^2_{z_j}
		\mathsf{k}_{\tau,\z+\eta h \boldsymbol{e}_j}
	\right)\!
	(\hat\bx,\hat{\bf u})
	\phi(
		n
		\norm{\hat{\mathbf{x}}-\chi_\tau
	\left(\widetilde{\mathbf{u}}\right)}^\nu)
	\dd\eta.
\end{aligned}
\end{equation}

Let $\widetilde{\rho}_j<{\rho}_j$.
 For each $\z \in \mathcal{O}_{\widetilde{\boldsymbol{\rho}}}$
 with $\widetilde{\boldsymbol{\rho}} = (\rho_1,\rho_2,\dots,\widetilde\rho_{j},\dots)$
 there exists $\y\in \mathbb{U}$ such that
 $\snorm{z_j-y_j}<\widetilde\rho_j-1$. 
 Consider $h\in \IC$ satisfying
 $0<\snorm{h}<{\rho}_j -\widetilde{\rho}_j$. Set
 \begin{equation}
 	\zeta_h
 	=
 	{\rho}_j -\widetilde{\rho}_j
 	-
 	\snorm{h}
 \end{equation}
 We need to ensure
 $B(z_j+\eta h,\zeta_h) \subset \mathcal{O}_{\rho_j}$ for any $\eta \in [0,1]$.
 For $\widetilde{z}_j \in B(z_j+\eta h,\zeta_h) $ we have
 \begin{equation}\label{eq:eta_h}
 \begin{aligned}
 	\snorm{
 		\widetilde{z}_j
 		-
 		y_j
 	}
 	&
 	\leq
 	\snorm{
 		\widetilde{z}_j
 		-
 		z_j-\eta h
 	}
 	+
 	\snorm{h}
 	+
 	\snorm{
 		z_j
 		-
 		y_j
 	}
 	\\
 	&
 	\leq
 	\zeta_h
 	+
 	\snorm{h}
 	+
 	\widetilde\rho_j-1
 	\leq
 	\rho_j-1.
 \end{aligned}
 \end{equation}
Thus, for $\zeta_{h}$ as in \eqref{eq:eta_h} Cauchy's integral formula yields
\begin{equation}
    \begin{aligned}
	&\left(
		\partial^2_{z_j}
		\mathsf{k}_{\tau,\z+\eta h \boldsymbol{e}_j}
	\right)
	(\hat\bx,\hat{\bf u})
   	\phi(
		n
		\norm{\hat{\mathbf{x}}-\chi_\tau
	\left(\widetilde{\mathbf{u}}\right)}^\nu)\\
	&\qquad\qquad=
	\frac{1}{\pi \imath}
	\int\limits_{z'_j\in \partial B(z_j+h\eta,\zeta_{h})}
	\frac{
		\mathsf{k}_{\tau,\z'}
		(\hat\bx,\hat{\bf u})
	}{
		(z'_j-z_j-\eta h)^3
	}
    \phi(
    	n
	\norm{\hat{\mathbf{x}}-\chi_\tau
	\left(\widetilde{\mathbf{u}}\right)}^\nu)
	\dd z'_j,
    \end{aligned}
\end{equation}
with $\z' =(z_1,\dots,z_j',\dots)$, and
\begin{equation}
\begin{aligned}
	\snorm{
	\left(
		\partial^2_{z_j}
		\mathsf{k}_{\tau,\z+\eta h \boldsymbol{e}_j}
	\right)
	(\hat\bx,\hat{\bf u})
    	\phi(
		n
		\norm{\hat{\mathbf{x}}-\chi_\tau
	\left(\widetilde{\mathbf{u}}\right)}^\nu)
	}
	\leq
    	\frac{2}{\zeta_{h}^2}
	\sup_{\z \in \mathcal{O}_{\boldsymbol{\rho}}}
	\snorm{
		\mathsf{k}_{\tau,\z}^{(n)}
		(\hat\bx,\hat{\bf u})
	}.
\end{aligned}
\end{equation}

Hence, from \cref{eq:singktaylor},
\begin{equation}
	\norm{
	\frac{\mathsf{k}^{(n)}_{\tau,\z + h \boldsymbol{e}_j}
	-
	\mathsf{k}^{(n)}_{\tau,\z}}{h}
	-
	\mathsf{g}^{(n)}_{\tau,\z}
	}_{	L^\infty
	\left(
		\hat{\Gamma} 
		\times 
		\widetilde{\tau}
	\right)}
	\leq
    	\frac{\snorm{h}}{\zeta_{h}^2}
	\sup_{\z \in \mathcal{O}_{\boldsymbol{\rho}}}
	\norm{
		\mathsf{k}_{\tau,\z}^{(n)}
	}_{L^\infty(\hat{\Gamma}\times\widetilde{\tau})},
\end{equation}
where $\mathsf{g}^{(n)}_{\tau,\z} \in	 L^{\infty}
	\left(
		\hat{\Gamma} \times \widetilde{\tau}
	\right) $
is defined for each $n\in \IN$ as 
\begin{equation}
	\mathsf{g}^{(n)}_{\tau,\z}
	(\hat\bx,\hat{\bf u})
	\coloneqq
	\left(
		\partial_{z_j}
		\mathsf{k}_{\tau,\z}
	\right)\!
	(\hat\bx,\hat{\bf u})
	\phi(
		n
	\norm{\hat{\mathbf{x}}-\chi_\tau
	\left(\widetilde{\mathbf{u}}\right)}^\nu),
\end{equation}
and using \cref{eq:kernel_n_bound} we obtain
\begin{equation}
	\norm{
	\frac{\mathsf{k}^{(n)}_{\tau,\z + h \boldsymbol{e}_j}
	-
	\mathsf{k}^{(n)}_{\tau,\z}}{h}
	-
	\mathsf{g}^{(n)}_{\tau,\z}
	}_{	L^\infty
	\left(
		\hat{\Gamma} 
		\times 
		\widetilde{\tau}
	\right)}
	\leq
	C
	\widetilde{C}_{\mathsf{a}}
    	\frac{\snorm{h}}{\zeta^2_h}
	n
	\sup_{t\in [0,\infty)}
	\snorm{\phi'(t)}.
\end{equation}
Hence, for each $n \in \mathbb{N}$, the map
$
	\mathcal{O}_{\widetilde{\boldsymbol{\rho}}}
	\ni
	\z
	\mapsto
	\mathsf{k}^{(n)}_{\tau,\z}
	\in
	 L^{\infty}\left(\hat{\Gamma}\times\widetilde{\tau}\right)
$
is holomorphic at each $\z \in \mathcal{O}_{\widetilde{\boldsymbol{\rho}}}$ 
in the direction $z_j$.

To show that $\mathbb{U} \ni \y\mapsto\mathsf{k}^{(n)}_{\tau,\y} \in L^{\infty}\left(\hat{\Gamma}\times\widetilde{\tau}\right)$
is $(\boldsymbol{b},p,\varepsilon)$-holomorphic,
we observe that for any $\rho>1$ one has 
\begin{equation}
	\mathcal{O}_{{{\rho}}}
	= 
	\bigcup_{\widetilde{\rho}:
	\widetilde{\rho}<\rho} 
	\mathcal{O}_{\widetilde{{\rho}}}
\end{equation}
and one can extend the aforementioned property to
$\z \in \mathcal{O}_{{\boldsymbol{\rho}}}$.
Therefore, we have verified \cref{def:bpe_holomorphy2} in 
\cref{def:bpe_holomorphy} and \cref{eq:linftyAt}
is $(\boldsymbol{b},p,\varepsilon)$-holomorphic
for each $\tau \in \mathcal{G}$ with $\varepsilon>0$
independent of $n\in \mathbb{N}$.

{\bf Step \encircled{\sf C}: $(\boldsymbol{b},p,\varepsilon)$-holomorphy of $\mathcal{A}$.}
Now, fix $\tau\in\mathcal{G}$, and consider
\[
	\mathcal{A}_\tau^{(n)}
	\colon
	\mathcal{O}_{\boldsymbol{\rho}}
	\to
	\mathscr{L}
	\left(
		L^2(\hat{\Gamma}), L^2(\hat{\Gamma})
	\right)
\]
with $\mathcal{A}_\tau^{(n)}$ as defined in \cref{eq:Ataun}. 
Then, \encircled{\sf A} and \encircled{\sf A'} show that $\mathcal{A}_\tau^{(n)}$ 
converges uniformly to $\mathcal{A}_\tau$ on $\mathcal{O}_{\boldsymbol{\rho}}$ with $\mathcal{A}_\tau$ as in \cref{eq:singularAtau}. 
 Step \encircled{\sf B} shows that $\mathcal{A}_\tau^{(n)}$ is $(\boldsymbol{b},p,\varepsilon)$-holomorphic with $\varepsilon>0$ independent of $n$. 
 Thus, \cref{lmm:herve} yields that $\mathcal{A}_\tau$ is $(\boldsymbol{b},p,\varepsilon)$-holomorphic.
 
In the same way as in Step \encircled{\sf C} in the proof of
\cref{thm:holomorphy_bounded_kernel}, we now conclude that
\begin{equation}
\mathcal{A}=\sum_{\tau\in\mathcal{G}}\mathcal{A}_{\tau}
\end{equation}
with $\mathcal{A}$ as in \cref{eq:bpe_hol_A_sing} is
$(\boldsymbol{b},p,\varepsilon)$-holomorphic for
some  $\varepsilon>0$,
which is the assertion.
\end{proof}

\subsection{Parametric Shape Holomorphy of the Parameter--to--Solution Map}
\label{sec:parameric_d2s_map}
A key consequence of the presently obtained parametric holomorphy
result of the parameter-to-operator map \cref{eq:param_to_operator} 
is that of the parameter-to-solution map \cref{eq:param_to_solution}.

\begin{corollary}\label{cor:par-to-sol-holomorphy}
Let \cref{assump:parametric_boundary} be satisfied
with $\boldsymbol{b} \in \ell^p(\IN)$ and $p\in (0,1)$,
and let the assumptions of \cref{thm:holomorphy_sing_kernel}
be fulfilled as well. 
In addition, assume that
$\hat{\OA}_\y \in
\mathscr{L}_{\normalfont{\text{iso}}}(L^2(\hat{\Gamma}),L^2(\hat{\Gamma}))$
for each $\y \in \mathbb{U}$ and that the map
$\mathbb{U} \ni \y \mapsto \hat{f}_{\y} \in L^2(\hat\Gamma)$
is $(\boldsymbol{b},p,\varepsilon)$-holomorphic
and continuous for some $\varepsilon>0$.
Then there exists $\varepsilon'>0$ such that
the parameter-to-solution map 
\begin{equation}
	\mathcal{S}:
	\mathbb{U}
    	\to
    	L^2(\hat{\Gamma}):
	\y
	\mapsto 
	u_\y
	\isdef
	\hat{\mathsf{A}}_{\y}^{-1} \hat{f}_\y,
\end{equation}
is $(\boldsymbol{b},p,\varepsilon')$-holomorphic and continuous.
\end{corollary}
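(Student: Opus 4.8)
The plan is to realise the parameter-to-solution map as a composition of holomorphic maps. For $\y\in\mathbb{U}$ one has $u_\y=\hat{\OA}_\y^{-1}\hat f_\y=\mathrm{app}\bigl(\mathrm{inv}(\hat{\OA}_\y),\hat f_\y\bigr)$, so $\mathcal S$ is built from the $(\boldsymbol b,p,\varepsilon)$-holomorphic map $\y\mapsto\hat{\OA}_\y$ of \cref{thm:holomorphy_sing_kernel}, the $(\boldsymbol b,p,\varepsilon)$-holomorphic map $\y\mapsto\hat f_\y$ assumed in the hypotheses, and the holomorphic inversion and application maps of \cref{prop:hol_maps_banach_spaces}. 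Since, by \cref{thm:holomorphic}, compositions of holomorphic maps between complex Banach spaces are holomorphic, the single real obstacle is that $\mathrm{inv}$ is only holomorphic on $\mathscr L_{\mathrm{iso}}(L^2(\hat\Gamma),L^2(\hat\Gamma))$, whereas $\hat{\OA}_\y$ is assumed invertible only for \emph{real} $\y\in\mathbb{U}$. The heart of the argument is therefore to propagate invertibility, in a quantitatively controlled way, from $\mathbb{U}$ into a sufficiently thin complex neighbourhood, which forces a shrinking of the holomorphy radius.

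I would begin with the real-variable estimates. Fix $\varepsilon_0>0$ not exceeding the radius $\varepsilon$ produced by \cref{thm:holomorphy_sing_kernel} nor the one in the hypothesis on $\hat f$. Since $\mathcal A\colon\y\mapsto\hat{\OA}_\y$ is continuous and $\mathrm{inv}$ is holomorphic, hence continuous, the composition $\y\mapsto\hat{\OA}_\y^{-1}$ is continuous on the compact set $\mathbb{U}$, so $M\coloneqq\sup_{\y\in\mathbb{U}}\norm{\hat{\OA}_\y^{-1}}_{\mathscr L(L^2(\hat\Gamma),L^2(\hat\Gamma))}<\infty$; likewise $M_f\coloneqq\sup_{\y\in\mathbb{U}}\norm{\hat f_\y}_{L^2(\hat\Gamma)}<\infty$, whence $\sup_{\y\in\mathbb{U}}\norm{u_\y}_{L^2(\hat\Gamma)}\le M\,M_f$, which already delivers \cref{def:bpe_holomorphy1} in \cref{def:bpe_holomorphy}. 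By the first part of \cref{prop:hol_maps_banach_spaces}, any bounded linear operator within distance $(2M)^{-1}$ of some $\hat{\OA}_\y$, $\y\in\mathbb{U}$, again lies in $\mathscr L_{\mathrm{iso}}$ and has inverse of norm at most $2M$.

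The crux is to choose $\varepsilon'\in(0,\varepsilon_0]$ so small that, for every $(\boldsymbol b,\varepsilon')$-admissible $\boldsymbol\rho$ and every $\z\in\mathcal O_{\boldsymbol\rho}$, the holomorphic extension of $\mathcal A$ satisfies $\norm{\hat{\OA}_\z-\hat{\OA}_{\y(\z)}}_{\mathscr L(L^2(\hat\Gamma),L^2(\hat\Gamma))}<(2M)^{-1}$, where $\y(\z)\in\mathbb{U}$ is the coordinatewise closest real point, so $\snorm{z_j-y_j(\z)}\le\rho_j-1$ for each $j$. I would establish this by the Cauchy-estimate mechanism already employed in Step \encircled{{\sf B}} of the proof of \cref{thm:holomorphy_bounded_kernel}: moving from $\y(\z)$ to $\z$ one coordinate at a time and bounding, for each $j$, the operator norm of $\partial_{z_j}\hat{\OA}$ by Cauchy's integral formula over a circle whose radius is afforded by enlarging only the $j$-th semi-axis against the admissibility budget $\varepsilon_0$; the resulting per-coordinate increment is then of order $(\rho_j-1)\,b_j$, and the increments sum to a bound of the form $\mathrm{const}\cdot\sum_{j\ge1}(\rho_j-1)b_j\le\mathrm{const}\cdot\varepsilon'$, where the summability $\boldsymbol b\in\ell^p(\IN)$ with $p<1$ is precisely what makes the series converge. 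Hence this distance falls below $(2M)^{-1}$ once $\varepsilon'$ is small enough, and consequently $\hat{\OA}_\z\in\mathscr L_{\mathrm{iso}}$ with $\norm{\hat{\OA}_\z^{-1}}\le 2M$ for every $\z\in\mathcal O_{\boldsymbol\rho}$. I expect this step — keeping invertibility under the complex extension, uniformly over admissible poly-radii — to be the main obstacle; the remainder is bookkeeping.

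It then remains to collect the pieces and verify \cref{def:bpe_holomorphy}. By the previous step, $\z\mapsto\hat{\OA}_\z^{-1}=\mathrm{inv}(\hat{\OA}_\z)$ is well defined and holomorphic in each variable on $\mathcal O_{\boldsymbol\rho}$, uniformly bounded on $\mathbb{U}$ by $M$, and — using $\mathcal E_{\boldsymbol\rho}\subset\mathcal O_{\boldsymbol\rho}$ — bounded on $\mathcal E_{\boldsymbol\rho}$ by $2M$; thus $\y\mapsto\hat{\OA}_\y^{-1}$ is $(\boldsymbol b,p,\varepsilon')$-holomorphic. Forming $u_\z=\mathrm{app}(\hat{\OA}_\z^{-1},\hat f_\z)$ and invoking holomorphy of the application map from \cref{prop:hol_maps_banach_spaces}, together with the elementary bound $\norm{u_\z}_{L^2(\hat\Gamma)}\le\norm{\hat{\OA}_\z^{-1}}\,\norm{\hat f_\z}_{L^2(\hat\Gamma)}$, shows that $\mathcal S$ admits a complex extension holomorphic in each variable on $\mathcal O_{\boldsymbol\rho}$, satisfies $\sup_{\z\in\mathcal E_{\boldsymbol\rho}}\norm{u_\z}_{L^2(\hat\Gamma)}\le 2M\,C^f_{\varepsilon'}$ with $C^f_{\varepsilon'}$ the $(\boldsymbol b,p,\varepsilon')$-holomorphy bound for $\hat f$, is uniformly bounded on $\mathbb{U}$ by $M\,M_f$, and is continuous because $\mathcal A$, $\hat f$, $\mathrm{inv}$ and $\mathrm{app}$ are. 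This verifies all three items of \cref{def:bpe_holomorphy}, so $\mathcal S$ is $(\boldsymbol b,p,\varepsilon')$-holomorphic and continuous, which is the assertion.
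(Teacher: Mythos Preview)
Your proposal is correct and follows the same approach as the paper: realise $\mathcal S$ as the composition $\mathrm{app}\circ(\mathrm{inv}\circ\mathcal A,\hat f)$ and invoke items \ref{prop:hol_maps_banach_spaces2} and \ref{prop:hol_maps_banach_spaces3} of \cref{prop:hol_maps_banach_spaces}. The paper's proof is in fact a single sentence citing precisely these two items; your version spells out in detail the step the paper leaves implicit, namely the propagation of invertibility from $\mathbb{U}$ into a thin complex neighbourhood via \cref{prop:hol_maps_banach_spaces}\ref{prop:hol_maps_banach_spaces1} and a Cauchy-estimate continuity argument, which is exactly what forces the passage from $\varepsilon$ to a possibly smaller $\varepsilon'$.
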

\begin{proof}
The assertion then follows as a direct consequence from \cref{prop:hol_maps_banach_spaces2} and \cref{prop:hol_maps_banach_spaces3} in \cref{prop:hol_maps_banach_spaces}.
\end{proof}

\section{Applications to Boundary Integral Equations on Parametric Boundaries}
\label{sec:application}
In this section, we explore the applicability of the main result of 
\cref{sec:BIO}, i.e. \cref{thm:holomorphy_sing_kernel}, to the
boundary integral operators obtained from the boundary reduction
of the sound-soft Helmholtz scattering problem.

\subsection{Model Problem: Sound-Soft Acoustic Scattering}
\label{sec:soudn_soft}
Let $\D\subset \mathbb{R}^3$ be a
bounded Lipschitz domain with boundary $\Gamma:=\partial \D$
and denote by $\D^\cc:= \mathbb{R}^3 \backslash\overline{\D}$ 
the corresponding exterior domain. 
Given $\kappa>0$ and $\boldsymbol{\hat{d}}_{\text{inc}} \in \mathbb{S}^2
\coloneqq \{\bx \in \IR^3: \norm{\bx}=1\}$, 
we define an incident plane wave $u^\text{inc}(\bx)\coloneqq\exp(\imath
\kappa \bx \cdot \boldsymbol{\hat{d}}_{\text{inc}})$ 
and aim to find the scattered wave $u^\text{scat}\in H^1_\loc(\D^\cc)$
such that the total field $u\coloneqq u^\text{inc}+ u^\text{scat}$ satisfies
\begin{subequations}
\begin{align}
	\Delta u + \kappa^2 u &=0, \quad \text{in } \D^\cc, \label{eq:sound_soft}
	\\
	u &= 0, \quad \text{on } \Gamma,
\end{align}
\end{subequations}
and the scattered field additionally satisfies the Sommerfeld radiation conditions
\begin{align}\label{eq:Sommerf}
\frac{\partial u^{\text{scat}}}{\partial r}(\bx) - \imath \kappa u^{\text{scat}}(\bx) = o \left( r^{-1}\right)
\end{align}
as $r\coloneqq \norm{\bx}\rightarrow \infty$, uniformly in $\hat{\bx}:= \bx/r$.
Thus, since $u^{\text{inc}}$ satisfies \cref{eq:sound_soft} on its own,
we seek $u^{\text{scat}}\in H_\loc^1(\D^\cc)$ such that
\begin{subequations}\label{eq:sound_soft_problem}
\begin{align}
\Delta u^\text{scat} + \kappa^2 u^\text{scat} &=0, \quad\text{in } \D^\cc, 
\label{eq:HHeqn}
\\
u^\text{scat} &=-u^\text{inc}, \quad\text{on } \Gamma, 
\label{eq:SoundSftBc}
\end{align}
\end{subequations}
and \cref{eq:Sommerf} holds.

\subsection{Boundary Integral Operators}\label{sec:bios_def}
We proceed to perform a boundary reduction of 
\cref{eq:Sommerf} and \cref{eq:sound_soft_problem}
by means of boundary potentials in order to obtain a
equivalent boundary integral formulation.
To this end, we first introduce the Green's function
of the Helmholtz operator with leading frequency $\kappa \in \IR$ in $\IR^3$
\begin{equation}
	\text{G}^{(\kappa)}(\bz)
	\coloneqq
	\frac{\exp(\imath\kappa\norm{\bz})}{4\pi\norm{\bz}},
	\quad
	\bz
	\in 
	\IR^3 \backslash \{{\bf 0}\},
\end{equation}
with $\imath^2 =-1$ being the imaginary complex unit.
Next, we define the acoustic single layer potential 
$\mathcal{S}_{\Gamma}^{(\kappa)}: H^{-\frac{1}{2}}(\Gamma) \rightarrow
H^1_\loc(\D^\cc)$
\begin{equation}
	\left(
		\mathcal{S}^{(\kappa)}_{\Gamma}
		\varphi
	\right)
	(\bx)
	\isdef
	\int\limits_\Gamma
  	\text{G}^{(\kappa)}(\bx-\by)
	\varphi(\by)
	\text{d}\text{s}_\by,
	\quad
	\bx
	\in 
	\D^\cc,
\end{equation}
and the double layer potential
$\mathcal{D}_{\Gamma}^{(\kappa)}: H^{\frac{1}{2}}(\Gamma) \rightarrow
H^1(\Delta,\D^\cc)$
\begin{equation}
	\left(
		\mathcal{D}_{\Gamma}^{(\kappa)}
		\phi
	\right)
	(\bx)
	\isdef
	\int\limits_\Gamma
  	\frac{\partial \text{G}^{(\kappa)}(\bx-\by)}
	{\partial \mathbf{n}_\Gamma(\by)}
	\phi(\by)
	\text{d}\text{s}_\by,
	\quad
	\bx
	\in 
	\D^\cc.
\end{equation}
By $\mathbf{n}_\Gamma(\by)$ we denote the outward pointing
normal on $\by\in\Gamma$.
Both operators are continuous and satisfy \cref{eq:HHeqn}
and the Sommerfeld radiation condition \cref{eq:Sommerf}, see,
e.g., \cite{SS10}. 

By taking exterior Dirichlet and Neumann traces
\begin{equation}
	\gamma_\Gamma^\cc\colon H^1_\loc(\D^\cc)\to H^{\half}(\Gamma)
	\quad
	\text{and}
	\quad
	\frac{\partial}{\partial\mathbf{n}_\Gamma}\colon H^1(\Delta,\D^\cc)\to H^{-\half}(\Gamma)
\end{equation}
we obtain the single layer operator
\begin{align}\label{eq:SLO}
\OV^{(\kappa)}_{\Gamma}\isdef\gamma_\Gamma^\cc\mathcal{S}^{(\kappa)}_{\Gamma}\colon H^{-\half}(\Gamma)\to H^{\half}(\Gamma),
\end{align}
the double layer operator
\begin{align}\label{eq:DLO}
\half \mathsf{Id}+\OK^{(\kappa)}_{\Gamma}\isdef\gamma_\Gamma^\cc\mathcal{D}^{(\kappa)}_{\Gamma}\colon H^{\half}(\Gamma)\to H^{\half}(\Gamma),
\end{align}
and the adjoint double layer operator
\begin{align}\label{eq:ALO}
\half \mathsf{Id}-\OK^{(\kappa)'}_{\Gamma}\isdef\frac{\partial}{\partial\mathbf{n}_\Gamma}\mathcal{S}^{(\kappa)}_{\Gamma}\colon H^{-\half}(\Gamma)\to H^{-\half}(\Gamma).
\end{align}
The double layer operator and the adjoint double layer operator
are indeed adjoint operators in the bilinear $L^2(\Gamma)$-duality product \cite{SS10}.

\subsection{Combined Boundary Integral Formulation}
\label{sec:A_combined}
By means of the introduced potentials and boundary integral
operators, the sound-soft scattering problem from \cref{sec:soudn_soft} 
can be cast into an equivalent boundary integral formulation.
Here, we explore two widely well-know approaches.

\subsubsection{Direct Boundary Integral Formulation}
According to Green's representation formula,
the total field $u: \D^\cc \rightarrow \IC$ admits
the expression
\begin{align}\label{eq:int_rep_form1}
	u(\bx)
	=
	u^\text{inc}(\bx)
	-
	\mathcal{S}^{(\kappa)}_{\Gamma} 
	\left(
		\frac{\partial u}{\partial\mathbf{n}_\Gamma}
	\right)(\bx),
	\quad
	\bx \in \D^\cc.
\end{align}
The unknown boundary data $\frac{\partial u}{\partial\mathbf{n}_\Gamma}$
can, for example, be obtained by applying Dirichlet and Neumann traces to
\cref{eq:int_rep_form1} and solving
\begin{subequations}
\begin{align}
	\OV^{(\kappa)}_{\Gamma} 
	\frac{\partial u}{\partial\mathbf{n}_\Gamma} 
	&= 
	\gamma^\cc_{\Gamma} u^\text{inc}, 
	\quad \text{on} \; \Gamma,\quad \text{and},  \label{eq:BIE1}\\
	\left( \half \mathsf{Id} + \OK^{(\kappa)'}_{\Gamma}\right) 
	\frac{\partial u}{\partial\mathbf{n}_\Gamma} 
	&= 
	\frac{\partial u^\text{inc}}{\partial\mathbf{n}_\Gamma},  
	\quad \text{on} \; \Gamma.  \label{eq:BIE2}
\end{align}
\end{subequations}
However, well--posedness of \cref{eq:BIE1} and \cref{eq:BIE2} 
collapses when $\kappa^2$ corresponds to a Dirichlet or Neumann
eigenvalue of the Laplacian in $\D$, respectively. 
A fix to this issue consists in taking a suitable linear combination 
of \cref{eq:BIE1} and \cref{eq:BIE2}. 
Provided $\eta \in \IR \backslash \{0\}$, 
the so--called {\it coupling parameter}, we define
\begin{align}\label{eq:directBIO}
	\OA^{(\kappa,\eta)'}_{\Gamma}
	\coloneqq
	\half\mathsf{Id} 
	+ 
	\OK^{(\kappa)'}_{\Gamma}
	- 
	\imath \eta \OV^{(\kappa)}_{\Gamma}.
\end{align}
Exploiting that $\frac{\partial u}{\partial\mathbf{n}_\Gamma} \in L^2(\Gamma)$, 
see, e.g., \cite{Nec1967}, a new boundary integral
formulation of \cref{eq:sound_soft} using the direct approach reads as follows.
\begin{problem}\label{prb:problem_Aprime_k_n}
Seek $\phi \coloneqq\frac{\partial u}{\partial\mathbf{n}_\Gamma} \in L^2(\Gamma)$ 
such that
\begin{align}\label{eq:A_combined}
	\OA^{(\kappa,\eta)'}_{\Gamma}  
	\phi
	=
	\frac{\partial u^{\normalfont\text{inc}} }{\partial\mathbf{n}_\Gamma} 
	-
	\imath \eta
	\gamma^\cc_\Gamma
	u^{\normalfont\text{inc}}
	\in 
	L^{2}(\Gamma).
\end{align}
\end{problem}
Thus, one may establish that for each $\kappa \in \IR_{+}$ it holds
\begin{equation}
	\OA^{(\kappa,\eta)'}_{\Gamma} 
	\in 
	\mathscr{L}_{\normalfont{\text{iso}}}
	(
		L^2(\Gamma)
		,
		L^2(\Gamma)
	),
\end{equation}
thereby implying that \cref{prb:problem_Aprime_k_n}
is well--posed as an operator equation set in $L^2(\Gamma)$.

\subsubsection{Indirect Boundary Integral Formulation}
\label{sec:indirect_formulation}
Another approach to obtain a boundary integral formulation
for the sound-soft scattering problem
consists in expressing $u$ by means
of a {\it combined single and double layer potential},
namely we use the following \emph{ansatz} to represent 
$u\in H^{1}_{\loc}(\D^\cc)$:
\begin{align}\label{eq:combined_SD}
	u(\bx)
	=
	u^\text{inc}(\bx)
	+
	\left(\mathcal{D}^{(\kappa)}_{\Gamma} \varphi \right)(\bx)
	-
	\imath \eta \, 
	\left( \mathcal{S}^{(\kappa)}_{\Gamma} \varphi \right)(\bx),
	\quad
	\bx \in \D^\cc,
\end{align}
where $\varphi \in L^2(\Gamma)$ is an unknown boundary density
and $\eta \in \IR \backslash \{0\}$ is the coupling parameter. 
Application of the exterior Dirichlet trace
to \cref{eq:combined_SD} leads us to the following equivalent 
boundary integral formulation for \cref{eq:sound_soft}:

\begin{problem}
\label{prbm:BIE_sound_soft_A}
We seek $\varphi\in L^2(\Gamma)$ such that
\begin{align}\label{eq:A_combined_2}
	\OA^{(\kappa,\eta)}_{\Gamma}
	\varphi
	= 
	-
	\gamma_\Gamma^\cc
	u^{\normalfont\text{inc}}
	\in 
	L^2(\Gamma),
\end{align}
where
\begin{align}\label{eq:indirectBIO}
	\OA^{(\kappa,\eta)}_{\Gamma}
	\coloneqq
	\half\mathsf{Id} 
	+ 
	\OK^{(\kappa)}_{\Gamma}
	- 
	\imath \eta \OV^{(\kappa)}_{\Gamma}.
\end{align}
\end{problem}
We readily remark that the operators \cref{eq:directBIO} and \cref{eq:indirectBIO}
are adjoint to each other in the bilinear $L^2(\Gamma)$-duality pairing.
Thus, using similar tools to the ones as for the direct approach
in \cref{prb:problem_Aprime_k_n},
one can show that for each $\kappa \in \IR_{+}$
and $\eta \in \IR \backslash \{0\}$ it holds
\begin{equation}
	\OA^{(\kappa,\eta)}_{\Gamma} 
	\in 
	\mathscr{L}_{\normalfont{\text{iso}}}(L^2(\Gamma),L^2(\Gamma)).
\end{equation}
\medskip
In the following, to show shape holomorphy of the boundary
combined boundary integral operators \cref{eq:directBIO} and
\cref{eq:indirectBIO} from \cref{prb:problem_Aprime_k_n} and
\cref{prbm:BIE_sound_soft_A}. To this end,
we first show shape holomorphy
for each of the involved operators \cref{eq:SLO,eq:DLO,eq:ALO}.

\subsection{Parametric Shape Holomorphy of the Single Layer Operator}
\label{sec:shape_hol_V}
For each $\y \in \mathbb{U}$ we set
\begin{align}\label{eq:single_layer_ref_y}
	\hat{\OV}^{(\kappa)}_\y
	\coloneqq
	\tau_\y
	\,
	\OV^{(\kappa)}_{\Gamma_\y}
	\,
	\tau^{-1}_\y
	\in
	\mathscr{L}
	\left(
		L^2(\hat{\Gamma})
		,
		L^2(\hat{\Gamma})
	\right).
\end{align}
The mapping property stated in \cref{eq:single_layer_ref_y}
follows straightforwardly from \cref{lmm:pullback_operator}
and the fact that 
$\OV^{(\kappa)}_{\Gamma_\y}: L^2({\Gamma}_\y) \rightarrow L^2({\Gamma}_\y)$
defines a bounded linear operator, see e.g. \cite[Theorem 3]{CGL09}.
The following results establishes the parametric holomorphy of the
single layer BIO.

\begin{theorem}\label{thm:shape_hol_V}
Let \cref{assump:parametric_boundary} be 
fulfilled with $\boldsymbol{b} \in \ell^p(\IN)$ and $p\in (0,1)$.
Then, the map
\begin{align}\label{eq:map_V_hol}
	\mathcal{V}^{(\kappa)}:
	\mathbb{U}
	\rightarrow
	\mathscr{L}
	\left(
		L^2(\hat{\Gamma}),
		L^2(\hat{\Gamma})	
	\right)
	:
	\y
	\mapsto
	\hat{\OV}^{(\kappa)}_\y
\end{align}
is $(\boldsymbol{b},p,\varepsilon)$-holomorphic and continuous.
\end{theorem}
\begin{proof}
We first construct a complex extension of $\hat{\OV}^{(\kappa)}_\y$
and then show that this extension satisfies all assumptions of \cref{thm:holomorphy_sing_kernel}.
The single layer operator
admits the representation \cref{eq:parametric_BIO} with
\begin{align}
	\mathsf{a}
	(\by, \bz)
	=
	\frac{
		\exp(\imath \kappa \norm{\bz})
	}{
		4\pi \norm{\bz}
	},
	\quad
	\bz
	\in \IR^3
	\backslash
	\{{\bf 0}\}.
\end{align}
Thus, setting $\norm{\bz}_\IC \coloneqq \sqrt{\bz \cdot \bz}$ with 
$\sqrt{\cdot}$ signifiying the principal branch of the square 
root yields a complex extension
\begin{align}\label{eq:kernel_a_single_layer}
	\mathsf{a}
	(\by, \bz)
	=
	\frac{\exp(\imath\kappa\norm{\bz}_\IC)}{4\pi \norm{\bz}_\IC}
\end{align}
for all $\z \in \mathfrak{U}$. Thus, it remains to verify each item of the assumptions of \cref{thm:holomorphy_sing_kernel}. As per usual, let $\boldsymbol\rho\coloneqq (\rho_j)_{j\geq1}$ 
be any sequence of numbers strictly larger than
one that is $(\boldsymbol{b},\varepsilon)$-admissible. 
\begin{description}
	\item[\cref{thm:holomorphy_sing_kernel1}:]
	Our first observation is that $\mathsf{a}(\cdot,\cdot)$
	as in \cref{eq:kernel_a_single_layer} is independent 
	of the first argument. Consequently, in this particular 
	case, for each fixed $\bz \in \mathfrak{U}$ the map
	$\mathbb{U} \ni \y \mapsto \mathsf{a} \left(
		\boldsymbol{r}_\y \circ \chi_\tau
		\left(
			\widetilde{\mathbf{u}}
		\right)
		,
		\bz
		\right)
		\in 
		\IC
	$ is trivially $(\boldsymbol{b},p,\varepsilon)$-holomorphic
	in each direction, as it is constant.
	\item[\cref{thm:holomorphy_sing_kernel2}:]
    We first note that the principal branch
	of the square root is holomorphic in $\IC \backslash (-\infty,0]$ and
    that the map $\bz \mapsto \bz \cdot \bz$ is a multivariate polynomial, and thus holomorphic.
    Thus, the map $\mathfrak{U} \ni \bz \mapsto \norm{\bz}_\IC \in \IC$
	is holomorphic. Moreover, the function 
	$t\mapsto \exp(\imath\kappa t)$ is entire and thus holomorphic and $t\mapsto \frac{1}{t}$ 
	is holomorphic as well, except at $t=0$.
	Consequently, for any $\z \in \mathcal{O}_{\boldsymbol{\rho}}$
	the map
	\begin{equation}
		\mathfrak{U}
		\ni
		\bz
		\mapsto
		\mathsf{a}
		\left(
		\boldsymbol{r}_\z \circ \chi_\tau
		\left(
			\widetilde{\mathbf{u}}
		\right)
		,
		\bz
		\right)
		\in 
		\IC
	\end{equation}
	is holomorphic.
    \item[\cref{thm:holomorphy_sing_kernel3}:] For each $\tau \in \mathcal{G}$, $\widetilde{\mathbf{u}} \in \widetilde{\tau}$, $\z \in \mathcal{O}_{\boldsymbol{\rho}}$, and $\hat{\bx}\in\hat{\Gamma}$ and abbreviating 
    $\bz=\boldsymbol{r}_\z(\hat{\bx})-\boldsymbol{r}_\z \circ \chi_\tau
    \left(
        \widetilde{\mathbf{u}}
    \right)$ one has
	\begin{equation}\label{eq:a_single_layer}
	\mathsf{a}
   	 \left(
    	\boldsymbol{r}_\z \circ \chi_\tau
    	\left(
     	   \widetilde{\mathbf{u}}
  	  \right)
  	  ,
   	 \bz
   	 \right)
	=
	\frac{
		\exp(
			\imath\kappa\Re\{\norm{\bz}_\IC\}
			-
			\kappa\Im\{\norm{\bz}_\IC\}
		)
	}{
		4\pi \norm{\bz}_\IC
	}.
	\end{equation}
	Observing that $\snorm{ \norm{\bz}_\IC} = \norm{\bz}$ yields
	\begin{equation}\label{eq:bound_a_V_1}
	\snorm{\mathsf{a}(\by,\bz)}
	\leq
	\frac{\exp(\kappa\snorm{\Im\{\norm{\bz}_\IC\}})}{4\pi \snorm{\norm{\bz}_\IC}}
	\leq
	\frac{\exp(\kappa\norm{\bz})}{4\pi \norm{\bz}}.
	\end{equation}
	We remark that at this point the bound in \cref{eq:bound_a_V_1}
	does not completely fulfil \cref{thm:holomorphy_sing_kernel3} in 
	the assumptions of Theorem \ref{thm:holomorphy_sing_kernel}. 
	It follows from the definition of $\mathcal{O}_{\boldsymbol{\rho}}$
	that for each $\z \in \mathcal{O}_{\boldsymbol{\rho}}$ there exists
	$\y \in \mathbb{U}$ such that $\snorm{z_j-y_j}\leq \rho_j-1$, for all
	$j\in \IN$.
	Next, we observe that for any $\hat{\bx},\hat{\by} \in \hat\Gamma$
	one has
	\begin{equation}\label{eq:triangle_ineq_single_layer}
	\begin{aligned}
		\norm{
		\boldsymbol{r}_\z(\hat{\bx})
		-
		\boldsymbol{r}_\z(\hat{\by})
    		}
		\leq
		&
		\norm{
			\boldsymbol{r}_\z(\hat{\bx})
			-
			\boldsymbol{r}_\y(\hat{\bx})
		}
		+
		\norm{
			\boldsymbol{r}_\y(\hat{\bx})
			-
			\boldsymbol{r}_\y(\hat{\by})
		} \\
		&
		+
		\norm{
			\boldsymbol{r}_\y(\hat{\by})
			-
			\boldsymbol{r}_\z(\hat{\by})
		}.
	\end{aligned}
	\end{equation}
	It follows from \cref{eq:affine_parametric_representation}
	that for any $\hat{\bx} \in \hat{\Gamma}$
	\begin{equation}\label{eq:triangle_ineq_single_layer_2}
	\begin{aligned}
		\norm{
			\boldsymbol{r}_\z(\hat{\bx})
			-
			\boldsymbol{r}_\y(\hat{\bx})
		}
		&
		\leq
		\sum_{j\geq 1}
		\snorm{z_j-y_j}
		\norm{
			\bvarphi_j
		}_{L^\infty(\hat\Gamma;\IR^3)}
		\\
		&
		\leq
		\sum_{j\geq 1}
		\left(
			\rho_j-1
		\right)
		\norm{
			\bvarphi_j
		}_{\mathscr{C}^{0,1}(\hat\Gamma; \IR^3)}
		\\
		&
		\leq
		\varepsilon,
	\end{aligned}
	\end{equation}
	where the last inequality comes from the fact that 
	the sequence $\boldsymbol\rho\coloneqq (\rho_j)_{j\geq1}$ 
	is $(\boldsymbol{b},\varepsilon)$-admissible. 
	In addition, for any $\hat{\bx}, \hat{\by}  \in \hat\Gamma$ 
	and each $\y\in\mathbb{U}$ one has
	\begin{equation}\label{eq:triangle_ineq_single_layer_3}
		\norm{
			\boldsymbol{r}_\y(\hat{\bx})
			-
			\boldsymbol{r}_\y(\hat{\by})
		}
		\leq
		\norm{\hat{\bx}-\hat{\by}}
		\norm{
			\boldsymbol{r}_\y
		}_{\mathscr{C}^{0,1}(\hat\Gamma; \IR^3)}.
	\end{equation}
	The map 
	$\mathbb{U} \ni \y 
	\mapsto 
	\boldsymbol{r}_\y 
	\in \mathscr{C}^{0,1}(\hat\Gamma; \IR^3)$ is continuous according to 
	\cref{lmm:holomorphic_parametric_rep}, thus rendering the map 
	$\mathbb{U} \ni \y \mapsto 	
	\norm{
		\boldsymbol{r}_\y
	}_{\mathscr{C}^{0,1}(\hat\Gamma; \IR^3)}$
	continuous as well, as any norm depends continuoulsy on
	its argument. As pointed out previously, $\mathbb{U}$ 
	defines a compact subset when equipped with the product topology.
	Therefore, the map $\y \mapsto \norm{
		\boldsymbol{r}_\y
	}_{\mathscr{C}^{0,1}(\hat\Gamma; \IR^3)}$
	attains a maximum.
	Consequently, by collecting
	\cref{eq:triangle_ineq_single_layer} and 
	\cref{eq:triangle_ineq_single_layer_2}, one has that for
	any $\hat{\bx}, \hat{\by}  \in \hat\Gamma$ 
	and each $\y \in \mathbb{U}$ it holds
	\begin{align}
		\norm{
		\boldsymbol{r}_\z(\hat{\bx})
		-
		\boldsymbol{r}_{\z}(\hat{\by})
    		}
		\leq
		2\varepsilon
		+
		\text{diam}(\hat\Gamma)
		\,
		\max_{\y \in \mathbb{U}}
		\norm{
		\boldsymbol{r}_\y
		}_{\mathscr{C}^{0,1}(\hat\Gamma; \IR^3)}.
	\end{align}
	Hence, with 
	\begin{equation}
		C_{\mathsf{a}}
		\coloneqq 
		\exp
		\left(
			\kappa
			\left(
			2\varepsilon
			+
			\text{diam}(\hat\Gamma)
			\,
			\max_{\y \in \mathbb{U}}
			\norm{
			\boldsymbol{r}_\y
			}_{\mathscr{C}^{0,1}(\hat\Gamma; \IR^3)}
			\right)
		\right)
	\end{equation}
	$\mathsf{a}(\cdot,\cdot)$ as in \cref{eq:a_single_layer}
	satisfies \cref{thm:holomorphy_sing_kernel3} in 
	the assumptions of Theorem \ref{thm:holomorphy_sing_kernel}. 
\end{description}
Therefore, it follows from \cref{thm:holomorphy_sing_kernel} 
that the map in \cref{eq:map_V_hol} is 
$(\boldsymbol{b},p,\varepsilon)$-holomorphic and continuous.
\end{proof}

\subsection{Parametric Shape Holomorphy of the Double Layer Operator}
\label{sec:shape_hol_K}
For each $\y \in \mathbb{U}$ we define
\begin{align}\label{eq:double_layer_ref_y}
	\hat{\OK}_\y^{(\kappa)}
	\coloneqq
	\tau_\y
	\;
	\OK_{\Gamma_\y}^{(\kappa)}
	\tau^{-1}_\y
	\in
	\mathscr{L}
	\left(
		L^2(\hat{\Gamma})
		,
		L^2(\hat{\Gamma})
	\right).
\end{align}
Again, as argued previously for the single layer operator,
the mapping property stated in \cref{eq:double_layer_ref_y}
follows straightforwardly from \cref{lmm:pullback_operator}
and the fact that for each $\y\in \mathbb{U}$ one has 
$\OK_{\Gamma_\y}^{(\kappa)}: L^2(\Gamma_\y) \rightarrow L^2(\Gamma_\y) $
defines a bounded linear operator, see e.g. \cite[Theorem 3.5]{CGL09} 

However, unlike in the analysis for the single layer operator, 
we can not directly argue using solely \cref{assump:parametric_boundary}
and \cref{thm:shape_hol_V} as the term 
$\frac{\partial G^{(\kappa)}}{\partial\mathbf{n}}$ does not yield an integrable kernel
when considering the double layer operator set on a Lipschitz boundary. 
The latter difficulty can be overcome by assuming
that all surfaces are piecewise $\mathscr{C}^{1,1}$
as in the following assumption.

\begin{assumption}\label{assump:regularity_patches}
There is a decomposition $\mathcal{G}$ of $\hat{\Gamma}$ such that
for each $\y \in \mathbb{U}$
and each ${\tau} \in \mathcal{G}$ one has that
$\br_\y\circ\chi_\tau\in \mathscr{C}^{1,1}(\widetilde{\tau};\IR^3)$.
\end{assumption}

Next, we proceed to prove the parametric holomorphy result
for the double layer BIO.

\begin{theorem}\label{thm:shape_hol_double_layer}
Let \cref{assump:parametric_boundary} and 
\cref{assump:regularity_patches} be fulfilled
with $\boldsymbol{b} \in \ell^p(\IN)$ and $p\in (0,1)$.
Then, there exists $\varepsilon>0$ such that the map
\begin{align}\label{eq:map_K_hol}
	\mathcal{K}^{(\kappa)}:
	\mathbb{U}
	\rightarrow
	\mathscr{L}
	\left(
		L^2(\hat{\Gamma}),
		L^2(\hat{\Gamma})	
	\right)
	:
	\y
	\mapsto
	\hat{\OK}_\y^{(\kappa)}
\end{align}
is $(\boldsymbol{b},p,\varepsilon)$-holomorphic and continuous.
\end{theorem}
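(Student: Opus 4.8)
The plan is to realise the pulled-back double layer operator $\hat{\OK}^{(\kappa)}_\y$ from \cref{eq:double_layer_ref_y} as a boundary integral operator of the form \cref{eq:parametric_BIO} and then to invoke \cref{thm:holomorphy_sing_kernel}. Since the double layer potential reads $(\mathcal{D}^{(\kappa)}_{\Gamma_\y}\phi)(\bx)=\int_{\Gamma_\y}\frac{\partial G^{(\kappa)}(\bx-\by)}{\partial\mathbf{n}_{\Gamma_\y}(\by)}\,\phi(\by)\,\dd s_{\by}$ and $\nabla G^{(\kappa)}(\bz)=\tfrac{1}{4\pi}\exp(\imath\kappa\norm{\bz})\bigl(\imath\kappa-\norm{\bz}^{-1}\bigr)\bz\,\norm{\bz}^{-2}$, the relevant kernel is
\begin{equation*}
	\mathsf{a}(\by,\bz)
	=-\mathbf{n}_{\Gamma_\y}(\by)\cdot\nabla G^{(\kappa)}(\bz)
	=\frac{\exp(\imath\kappa\norm{\bz})\,\bigl(1-\imath\kappa\norm{\bz}\bigr)}{4\pi\,\norm{\bz}^{3}}\,\bigl(\mathbf{n}_{\Gamma_\y}(\by)\cdot\bz\bigr),
	\qquad\bz\in\IR^{3}\setminus\{\bsnul\}.
\end{equation*}
As in the proof of \cref{thm:shape_hol_V} I set $\norm{\bz}_\IC\coloneqq\sqrt{\bz\cdot\bz}$ with the principal branch of the square root and replace $\mathbf{n}_{\Gamma_\y}$ by the holomorphic extension of the pulled-back normal furnished by \cref{lmm:normal_derivative}; this produces a complex extension of $\mathsf{a}$ which is meaningful exactly on $\mathfrak{U}$ (there $\bz\cdot\bz\notin(-\infty,0]$, so $\norm{\bz}_\IC\neq0$). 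It then remains to verify the three hypotheses of \cref{thm:holomorphy_sing_kernel}.

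Hypotheses \cref{thm:holomorphy_sing_kernel1} and \cref{thm:holomorphy_sing_kernel2} are comparatively routine. For \cref{thm:holomorphy_sing_kernel1}, fix $\bz\in\mathfrak{U}$ and $\tilde{\mathbf{u}}\in\tilde{\tau}$; then $\y\mapsto\mathsf{a}(\br_\y\circ\chi_\tau(\tilde{\mathbf{u}}),\bz)$ depends on $\y$ only \emph{linearly}, through $\mathbf{n}_{\Gamma_\y}(\br_\y\circ\chi_\tau(\tilde{\mathbf{u}}))=\hat{\mathbf{n}}_\y(\chi_\tau(\tilde{\mathbf{u}}))$; writing $\hat{\mathbf{n}}_\y=\hat{\mathbf{m}}_\y/\sqrt{g_{\tau,\y}}$ with $\hat{\mathbf{m}}_\y=\partial_1\chi_{\tau,\y}\times\partial_2\chi_{\tau,\y}$ polynomial in $\y$ and $g_{\tau,\y}$ bounded away from zero by \cref{eq:gramianbound}, the combination of \cref{lmm:normal_derivative} and \cref{assump:regularity_patches} gives $(\boldsymbol{b},p,\varepsilon)$-holomorphy and continuity, pointwise in $\tilde{\mathbf{u}}$ and with $\varepsilon$ independent of $\tilde{\mathbf{u}}$ and $\bz$. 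For \cref{thm:holomorphy_sing_kernel2}, fix $\z\in\mathcal{O}_{\boldsymbol{\rho}}$; on $\mathfrak{U}$ the polynomial $\bz\mapsto\bz\cdot\bz$ avoids $(-\infty,0]$, hence $\bz\mapsto\norm{\bz}_\IC$, $\bz\mapsto\norm{\bz}_\IC^{-3}$, $\bz\mapsto\exp(\imath\kappa\norm{\bz}_\IC)$ and $\bz\mapsto\mathbf{n}_{\Gamma_\z}(\by)\cdot\bz$ are all holomorphic there, and so is their product.

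The decisive and hardest point is \cref{thm:holomorphy_sing_kernel3}. From the affine representation \cref{eq:affine_parametric_representation} together with $\boldsymbol{b}\in\ell^{1}(\IN)$ one first gets the uniform chord bound $\norm{\br_\z(\hat\bx)-\br_\z(\hat\by)}\le C\norm{\hat\bx-\hat\by}$ for $\z\in\mathcal{O}_{\boldsymbol{\rho}}$, which combined with \cref{lmm:lipschitz_preservation} yields, with $\bz=\br_\z(\hat\bx)-\br_\z(\hat\by)$, the two-sided estimate $\sqrt{\eta}\,\norm{\hat\bx-\hat\by}\le\snorm{\norm{\bz}_\IC}\le C\norm{\hat\bx-\hat\by}$; in particular $\exp(\kappa\snorm{\Im\{\norm{\bz}_\IC\}})$ and $\snorm{1-\imath\kappa\norm{\bz}_\IC}$ remain bounded, so the only dangerous factor is $\norm{\bz}_\IC^{-3}$, against which the crude estimate $\snorm{\mathbf{n}_{\Gamma_\z}(\by)\cdot\bz}\le\norm{\mathbf{n}_{\Gamma_\z}}\,\norm{\bz}$ gives only the \emph{forbidden} exponent $\nu=2$. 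The cancellation that saves the day comes from \cref{assump:regularity_patches}: for $\hat\bx$ and $\hat\by=\chi_\tau(\tilde{\mathbf{u}})$ lying in a common $\mathscr{C}^{1,1}$ patch, a first-order Taylor expansion of $\chi_{\tau,\z}$ at $\tilde{\mathbf{u}}$ gives $\br_\z(\hat\bx)-\br_\z(\hat\by)=D\chi_{\tau,\z}(\tilde{\mathbf{u}})\,\boldsymbol{\delta}+\mathcal{R}$ with $\norm{\boldsymbol{\delta}}\simeq\norm{\hat\bx-\hat\by}$ and $\norm{\mathcal{R}}\le C\norm{\hat\bx-\hat\by}^{2}$, the constant being controlled by the $\mathscr{C}^{1,1}$-norm of $\chi_{\tau,\z}$ uniformly in $\z$ (here one uses uniform convergence of $\sum_{j\geq1}z_j\,\bvarphi_j\circ\chi_\tau$ in $\mathscr{C}^{1,1}(\tilde{\tau};\IR^{3})$); since $\hat{\mathbf{m}}_\z(\hat\by)=\partial_1\chi_{\tau,\z}(\tilde{\mathbf{u}})\times\partial_2\chi_{\tau,\z}(\tilde{\mathbf{u}})$ obeys the \emph{polynomial} identities $(\mathbf{a}\times\mathbf{b})\cdot\mathbf{a}=(\mathbf{a}\times\mathbf{b})\cdot\mathbf{b}=0$, which hold for complex vectors just as for real ones, the tangential part is annihilated and $\snorm{\hat{\mathbf{m}}_\z(\hat\by)\cdot\bigl(\br_\z(\hat\bx)-\br_\z(\hat\by)\bigr)}\le C\norm{\hat\bx-\hat\by}^{2}$, hence $\snorm{\mathbf{n}_{\Gamma_\z}(\by)\cdot\bz}\le C\norm{\hat\bx-\hat\by}^{2}$. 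Collecting these bounds yields $\snorm{\mathsf{a}(\br_\z\circ\chi_\tau(\hat\by),\br_\z(\hat\bx)-\br_\z(\hat\by))}\le C_{\mathsf{a}}\,\norm{\br_\z(\hat\bx)-\br_\z(\hat\by)}^{-1}$, i.e.\ \cref{thm:holomorphy_sing_kernel3} with $\nu=1$. I expect this to be the main obstacle: the quadratic cancellation above is a \emph{local}, same-patch statement, and extending it \emph{uniformly} to all pairs $\hat\bx,\hat\by\in\hat\Gamma$ — in particular to pairs straddling patch interfaces, where the surface near the common boundary must still deviate only quadratically from a tangent plane — is precisely where the patchwise $\mathscr{C}^{1,1}$ regularity of \cref{assump:regularity_patches} has to be used, with all constants kept independent of $\z\in\mathcal{O}_{\boldsymbol{\rho}}$. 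Once \cref{thm:holomorphy_sing_kernel1}--\cref{thm:holomorphy_sing_kernel3} are verified, \cref{thm:holomorphy_sing_kernel} applies and delivers the $(\boldsymbol{b},p,\varepsilon)$-holomorphy and continuity of the map \cref{eq:map_K_hol}, which is the assertion.
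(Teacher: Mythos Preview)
Your plan mirrors the paper's proof: write the double-layer kernel, extend it holomorphically via $\norm{\cdot}_\IC$ and the extended normal from \cref{lmm:normal_derivative}, and verify the three hypotheses of \cref{thm:holomorphy_sing_kernel}. For \cref{thm:holomorphy_sing_kernel1} and \cref{thm:holomorphy_sing_kernel2} you and the paper argue identically. For \cref{thm:holomorphy_sing_kernel3} the paper simply quotes the classical same-patch bound $\snorm{\hat{\mathbf{n}}_\y(\hat\by)\cdot(\br_\y(\hat\bx)-\br_\y(\hat\by))}\le C\norm{\hat\bx-\hat\by}^{2}$ (citing \cite[Lemma~2.2.14]{SS10}) and carries it to complex $\z$ by a perturbation argument; you derive the same inequality directly via a first-order Taylor expansion of $\chi_{\tau,\z}$ together with the algebraic identity $(\mathbf{a}\times\mathbf{b})\cdot\mathbf{a}=0$, which indeed holds verbatim over $\IC^{3}$. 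Both routes land on $\nu=1$, so on the level of strategy and execution your argument coincides with the paper's.

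Your worry about pairs $\hat\bx,\hat\by$ lying in \emph{different} patches is well placed, but the mechanism you propose to resolve it is not correct: patchwise $\mathscr{C}^{1,1}$ regularity in \cref{assump:regularity_patches} does \emph{not} force the surface near a patch interface to deviate only quadratically from a tangent plane. At a genuine dihedral edge (which \cref{assump:regularity_patches} allows) the normal jumps, and one checks that $\snorm{\hat{\mathbf{n}}_\z(\hat\by)\cdot(\br_\z(\hat\bx)-\br_\z(\hat\by))}$ is only of order $\norm{\hat\bx-\hat\by}$, so the kernel is genuinely $O(\norm{\bz}^{-2})$ there and \cref{eq:bound_kernel_singular} cannot hold with any $\nu<2$. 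The paper's proof likewise establishes the quadratic cancellation only for $\hat\bx,\hat\by\in\tau$ and does not discuss the cross-patch case; so on this point your argument is no weaker than the paper's, but the obstacle you flag is real and is not settled by \cref{assump:regularity_patches} in the way you suggest.
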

\begin{proof}
As in the analysis of the parametric holomorphy result
for single layer operator, we start by constructing a complex
extension of $\hat{\OK}_{\y}$. To this end, we start by remarking that
the double layer operator has the representation
\cref{eq:parametric_BIO} with
\begin{align}
	\mathsf{a}(\by,\bz) 
	=
	\big({\bf n}(\by)
	\cdot
	\bz\big)
	\frac{
	\exp(\imath\kappa\norm{\bz})
	}{4\pi\norm{\bz}^3}
	\left(
		\imath \kappa
		\norm{\bz}
		-1
	\right),
	\quad
    	\by\in\hat{\Gamma},
	\quad
	\bz
	\in \IR^{3}
	\backslash
	\{{\bf 0}\}.
\end{align}
Thus, for each $\tau \in \mathcal{G}$, $\widetilde{\mathbf{u}} \in \widetilde{\tau}$, $\z \in \mathcal{O}_{\boldsymbol{\rho}}$, and $\hat{\bx}\in\hat{\Gamma}$ and abbreviating $\bz=\hat{\bx}-\boldsymbol{r}_\z \circ \chi_\tau
    \left(
        \widetilde{\mathbf{u}}
    \right)$ 
we obtain the complex extension
\begin{align}\label{eq:kernel_a_double_layer}
	\mathsf{a}
	(\boldsymbol{r}_\z(\hat{\mathbf{x}}), \bz)
	=
    \big(\hat{\mathbf{n}}_\z(\hat{\mathbf{x}})
	\cdot
	\bz\big)
	\frac{
	\exp(\imath \kappa\norm{\bz}_\IC)
	}{4\pi\norm{\bz}^3_\IC}
	\left(
		\imath \kappa
		\norm{\bz}_\IC
		-1
	\right),
	\quad
	\bz \in \mathfrak{U}.
\end{align}
As per usual, let $\boldsymbol\rho\coloneqq (\rho_j)_{j\geq1}$ 
be any sequence of numbers strictly larger than
one that is $(\boldsymbol{b},\varepsilon)$-admissible.
We verify each item of the assumptions of \cref{thm:holomorphy_sing_kernel}.

\begin{description}
	\item[\cref{thm:holomorphy_sing_kernel1}:]
	Follows directly from \cref{lmm:normal_derivative}.
	\item[\cref{thm:holomorphy_sing_kernel2}:]
	As in the proof of \cref{thm:shape_hol_V}, 
	the maps $\mathfrak{U} \ni \bz \mapsto \norm{\bz}_\IC \in \IC $
	and $\IC\setminus\{0\}\ni t \mapsto \frac{1}{t} \in \IC$
	are holomorphic.
	Consequently, for any $\z \in \mathcal{O}_{\boldsymbol{\rho}}$
	the map
	$	
		\mathfrak{U}
		\ni
		\bz
		\mapsto
		\mathsf{a}
		\left(
		\boldsymbol{r}_\z(\hat{\mathbf{x}})
		,
		\bz
		\right)
		\in 
		\IC
	$
	is holomorphic as well.
	\item[\cref{thm:holomorphy_sing_kernel3}:]
    We first need to address the non-integrability of $\frac{\partial G^{(\kappa)}}{\partial\mathbf{n}}$. To this end, it is well known that for each $\tau \in \mathcal{G}$, $\hat\bx, \hat\by \in \tau $, and $\y\in\mathbb{U}$ it holds    
    \begin{align}\label{eq:dlnormalest}
		\underline{c}(\tau)
		\snorm{
			\hat{\bf n}_\y (\hat\by)
			\cdot
			\left(
				\boldsymbol{r}_\y  (\hat\bx)
				-
				\boldsymbol{r}_\y  (\hat\by)
			\right)
		}
		\leq
		\norm{ \hat\bx -  \hat\by}^2
        		\leq
        		\overline{c}(\tau)
        		\norm{ \boldsymbol{r}_\y(\hat\bx)-\boldsymbol{r}_\y (\hat\by)}^2,
	\end{align}
    for some $0<\underline{c}\leq\overline{c}<\infty$ depending on the patch $\tau$,
    see, e.g., \cite[Lemma 2.2.14]{SS10}. 
   As there are only finitely many patches, the constants in \cref{eq:dlnormalest}
    can be made independent of $\tau\in \mathcal{G}$ by taking \
    $\underline{c} = \min_{\tau \in \mathcal{G}} \underline{c}(\tau)$ and $\overline{c} = \max_{\tau \in \mathcal{G}} \overline{c}(\tau)$.
    We note particularly that the proof together with our assumptions guarantee that the constant can be chosen independently of $\y \in \mathbb{U}$. By a perturbation argument we can conclude that there is $\varepsilon>0$ and two possibly different constants $0<\underline{c}\leq\overline{c}<\infty$ such that 
    \begin{align}
        \underline{c}
		\snorm{
			\hat{\bf n}_\z (\hat\by)
			\cdot
			\left(
				\boldsymbol{r}_\z  (\hat\bx)
				-
				\boldsymbol{r}_\z  (\hat\by)
			\right)
		}
		\leq
		\norm{ \hat\bx -  \hat\by}^2
        \leq
        \overline{c}
        \norm{ \boldsymbol{r}_\z(\hat\bx)-\boldsymbol{r}_\z (\hat\by)}^2,
	\end{align}
    for all $\z\in\mathcal{O}_{\boldsymbol{\rho}}$. Abbreviating $\bz=\boldsymbol{r}_\z(\hat{\bx})-\boldsymbol{r}_\z  (\hat\by)$ and proceeding as in the proof of \cref{thm:shape_hol_V} implies
	\begin{equation}
	|\mathsf{a}
    \left(
    \boldsymbol{r}_\z  (\hat\by)
    ,
    \bz
    \right)|
	\leq
    \frac{\overline{c}}{\underline{c}}
	\frac{\exp(\kappa\norm{\bz})}{4\pi \norm{\bz}}\left(
		\kappa
		\norm{\bz}
		+1
	\right),
	\end{equation}
    i.e., the singularity of the complex extension of the double layer operator behaves similarly to the single layer operator case. The conclusion follows in complete analogy to the proof of \cref{thm:shape_hol_V}.
\end{description}
Finally, recalling \cref{thm:holomorphy_sing_kernel}
one may conclude that the map introduced in \cref{eq:map_K_hol} is 
$(\boldsymbol{b},p,\varepsilon)$-holomorphic and continuous.
\end{proof}

\begin{remark}
\Cref{assump:regularity_patches} only enters when using \cite[Lemma 2.2.14]{SS10} to state
\cref{eq:dlnormalest}. A careful inspection of its proof shows that the patch-wise $\mathscr{C}^{1,1}$
smoothness requirement in \cref{assump:regularity_patches} may be relaxed to
$\mathscr{C}^{1,\alpha}$ with $\alpha\in(0,1)$. More precisely, it suffices to assume that
for each $\y \in \mathbb{U}$
and each ${\tau} \in \mathcal{G}$ one requires
$\br_\y\circ\chi_\tau\in \mathscr{C}^{1,\alpha}(\widetilde{\tau};\IR^3)$ for some $\alpha\in(0,1)$.
\end{remark}

\subsection{Parametric Shape Holomorphy of the Adjoint Double Layer Operator}
In order to establish the parametric holomorphy 
result for the adjoint double layer operator from
the double layer operator itself, we consider for each $\y \in \mathbb{U}$ the bilinear $L^2(\Gamma_\y)$-inner product
$(\cdot,\cdot)_{L^2(\Gamma_\y)}$. 
For all $\varphi, \phi \in L^2(\Gamma_\y)$ it holds that
\begin{align}\label{eq:doublelayerduality}
	\dotp{{\OK}^{(\kappa)}_{\Gamma_\y} \varphi }{\phi}_{L^2(\Gamma_\y)}
	=
	\dotp{ \varphi }{{\OK}^{(\kappa)'}_{\Gamma_\y}\phi}_{L^2(\Gamma_\y)}.
\end{align}
For each $\y \in \mathbb{U}$ we define
\begin{align}\label{eq:adjoint_double_layer_ref_y}
	\hat{\OK}^{(\kappa)'}_{\y}
	\coloneqq
	\tau_\y
	\;
	{\OK}^{(\kappa)'}_{\Gamma_\y}
	\;
	\tau^{-1}_\y
	\in
	\mathscr{L}
	\left(
		L^2(\hat{\Gamma})
		,
		L^2(\hat{\Gamma})
	\right),
\end{align}
where the mapping properties are immediate. In fact, defining
a modified, real-valued inner product on $L^2(\hat{\Gamma})$,
yet still depending on $\y \in \mathbb{U}$, by
\begin{equation}\label{eq:modifiedinner}
\begin{aligned}
	\dotp{ \varphi }{\phi}_{L^2(\Gamma_\y)}
	&
	=
	\int\limits_{\Gamma_\y}
	\varphi(\by) \phi (\by)
	\text{d}s_{\by} 
	\\
	&
	=
	\sum_{\tau \in \mathcal{G}} 
	\int\limits_{\widetilde{\tau}} 
	\left(
		\tau_\y \varphi
		\circ
		\chi_\tau
	\right)
	(\widetilde{\mathbf{u}}) 
	\left(
		\tau_\y \phi
		\circ
		\chi_\tau
	\right)
	(\widetilde{\mathbf{u}}) 
	\sqrt{g_{\tau,\y}(\widetilde{\mathbf{u}})} 
	\text{d} 
	\widetilde{\mathbf{u}} 
	\\
	&
	\eqqcolon
	\dotp{\hat \varphi}{\hat \phi}_{L^2(\hat \Gamma),\y},
\end{aligned}
\end{equation}
with $\hat{\varphi}=\tau_\y\varphi,\hat{\phi}=\tau_\y\phi\in L^2(\hat{\Gamma})$ and $\y \in \mathbb{U}$,
one readily verifies that $\hat{\OK}^{(\kappa)'}_{\y}$ is indeed the adjoint
operator of $\hat{\OK}^{(\kappa)}_{\y}$ with respect to
$\dotp{\cdot}{\cdot}_{L^2(\hat \Gamma),\y}$. I.e.,
for each $\y \in \mathbb{U}$ and any $\hat\varphi, \hat\phi \in L^2(\hat\Gamma)$
it holds
\begin{align}\label{eq:refdoublelayeradjoint}
	\dotp{\hat{\OK}^{(\kappa)}_{\y} \hat\varphi }{\hat\phi}_{L^2(\hat \Gamma),\y}
	=
	\dotp{\hat\varphi }{\hat{\OK}^{(\kappa)'}_{\y}\hat\phi}_{L^2(\hat \Gamma),\y}.
\end{align}
Using \cref{eq:doublelayerduality},
parametric holomorphy of the adjoint double layer operator can be derived from
the corresponding result for the double layer operator as thoroughly described 
in the following result.

\begin{theorem}\label{thm:shape_hol_K_adj}
Let \cref{assump:parametric_boundary} and \cref{assump:regularity_patches}
be fulfilled with $\boldsymbol{b} \in \ell^p(\IN)$ and $p\in (0,1)$.
Then, there exists $\varepsilon>0$ such that the map
\begin{align}\label{eq:map_K_adj__hol}
	\mathcal{K}^{(\kappa)'}:
	\mathbb{U}
	\rightarrow
	\mathscr{L}
	\left(
		L^2(\hat{\Gamma}),
		L^2(\hat{\Gamma})	
	\right)
	:
	\y
	\mapsto
	\hat{\OK}^{(\kappa)'}_\y
\end{align}
is $(\boldsymbol{b},p,\varepsilon)$-holomorphic and continuous.
\end{theorem}

\begin{proof}
The complex extension of the modified inner product \cref{eq:modifiedinner}
to $\z \in \mathcal{O}_{\boldsymbol{\rho}}$ reads
\begin{align}
	\dotp{\hat \varphi}{\hat \phi}_{L^2(\hat \Gamma),\z}
	=
	\sum_{\tau \in \mathcal{G}} 
	\int\limits_{\widetilde{\tau}} 
	\left(
		\hat \varphi
		\circ
		\chi_\tau
	\right)
	(\widetilde{\mathbf{u}}) 
	\left(
		\hat\phi
		\circ
		\chi_\tau
	\right)
	(\widetilde{\mathbf{u}}) 
	\sqrt{g_{\tau,\z}(\widetilde{\mathbf{u}})} 
	\text{d} 
	\widetilde{\mathbf{u}}
\end{align}
and takes arguments $\hat{\varphi},\hat{\phi}\in L^2(\hat{\Gamma})$.
Thus, in accordance with \cref{eq:refdoublelayeradjoint}, we can
extend $\hat{\OK}^{(\kappa)'}_{\y}$ into
$\mathcal{O}_{\boldsymbol{\rho}}$ as the (unique)
bounded linear operator satisfying 
$\hat{\OK}^{(\kappa)'}_{\z}:L^2(\hat\Gamma) \rightarrow L^2(\hat\Gamma)$
\begin{align}
	\dotp{\hat{\OK}^{(\kappa)}_{\z} \hat\varphi }{\hat\phi}_{L^2(\hat\Gamma),\z}
	=
	\dotp{\hat\varphi }{\hat{\OK}^{(\kappa)'}_{\z}\hat\phi}_{L^2(\hat\Gamma),\z},
\end{align}
for all $\hat\varphi, \hat\phi \in L^2(\hat\Gamma)$ and $\z\in\mathcal{O}_{\boldsymbol{\rho}}$.
The map $\mathbb{U} \ni \y \mapsto \dotp{\cdot }{\cdot}_{\y,L^2(\hat\Gamma)}$ into the
Banach space of complex-valued bilinear forms on $L^2(\hat{\Gamma})\times L^2(\hat{\Gamma})$
is $(\boldsymbol{b},p,\varepsilon)$-holomorphic and continuous as a consequence
of \cref{lemma:gramian_holomorphic}, and so is 
$\mathbb{U} \ni \y \mapsto \hat{\OK}^{(\kappa)}_\y\in \mathscr{L}\left(L^2(\hat\Gamma) , L^2(\hat\Gamma)\right)$
as stated in \cref{thm:shape_hol_double_layer}.
\end{proof}

\subsection{Parametric Shape holomorphy of Combined Field Integral Equations}
With the shape holomorphy results of the single, double, and adjoint
double layer operator available, we can now conclude that
\cref{eq:directBIO} and \cref{eq:indirectBIO}, i.e.,
\begin{equation}
	\mathsf{A}^{(\kappa,\eta)}_{\Gamma}: 
	L^2(\Gamma) \rightarrow  L^2(\Gamma)
	\quad
	\text{and}
	\quad
	\mathsf{A}^{(\kappa,\eta)'}_{\Gamma}:
	L^2(\Gamma) \rightarrow  L^2(\Gamma),
\end{equation}
are shape holomorphic as well.
\begin{corollary}\label{cor:combinedholomorphy}
Let \cref{assump:parametric_boundary} and \cref{assump:regularity_patches} be satisfied
with $\boldsymbol{b} \in \ell^p(\IN)$ and $p\in (0,1)$.
Then there exists $\varepsilon>0$ such that
\begin{subequations}\label{eq:combinedholomorphy}
\begin{align}
    \mathcal{A}^{(\kappa,\eta)}\colon
	\mathbb{U}
	\rightarrow
	\mathscr{L}
	\left(
		L^2(\hat{\Gamma}),
		L^2(\hat{\Gamma})	
	\right)
	&\colon
    \y
    \mapsto
	\hat{\mathsf{A}}_\y^{(\kappa,\eta)}
	\coloneqq
	\tau_\y
	\;
	\mathsf{A}_{\Gamma_\y}^{(\kappa,\eta)}
	\tau^{-1}_\y,\\
    \mathcal{A}^{(\kappa,\eta)'}\colon
	\mathbb{U}
	\rightarrow
	\mathscr{L}
	\left(
		L^2(\hat{\Gamma}),
		L^2(\hat{\Gamma})	
	\right)
	&\colon
    \y
    \mapsto
	\hat{\mathsf{A}}_\y^{(\kappa,\eta)'}
	\coloneqq
	\tau_\y
	\;
	\mathsf{A}_{\Gamma_\y}^{(\kappa,\eta)'}
	\tau^{-1}_\y,
\end{align}
\end{subequations}
are $(\boldsymbol{b},p,\varepsilon)$-holomorphic and continuous.
\end{corollary}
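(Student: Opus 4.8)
The plan is to recognise both combined operators as finite linear combinations of layer operators whose shape holomorphy has already been established, together with a parameter-independent identity term, and then to invoke that $(\boldsymbol{b},p,\varepsilon)$-holomorphy is preserved under finite linear combinations. First I would note that conjugation by the pullback $\tau_\y$ is linear and fixes $\mathsf{Id}$, so \cref{eq:directBIO} and \cref{eq:indirectBIO} give
\begin{equation*}
	\hat{\mathsf{A}}_\y^{(\kappa,\eta)}
	=
	\half\mathsf{Id}+\hat{\OK}_\y^{(\kappa)}-\imath\eta\,\hat{\OV}_\y^{(\kappa)}
	\qquad\text{and}\qquad
	\hat{\mathsf{A}}_\y^{(\kappa,\eta)'}
	=
	\half\mathsf{Id}+\hat{\OK}_\y^{(\kappa)'}-\imath\eta\,\hat{\OV}_\y^{(\kappa)},
\end{equation*}
with $\hat{\OV}_\y^{(\kappa)}$, $\hat{\OK}_\y^{(\kappa)}$, $\hat{\OK}_\y^{(\kappa)'}$ as in \cref{eq:single_layer_ref_y}, \cref{eq:double_layer_ref_y}, and \cref{eq:adjoint_double_layer_ref_y}; the constant map $\y\mapsto\half\mathsf{Id}\in\mathscr{L}(L^2(\hat\Gamma),L^2(\hat\Gamma))$ is trivially $(\boldsymbol{b},p,\varepsilon)$-holomorphic and continuous for every $\varepsilon>0$.

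Next I would invoke \cref{thm:shape_hol_V}, \cref{thm:shape_hol_double_layer}, and \cref{thm:shape_hol_K_adj} to produce radii $\varepsilon_{\OV},\varepsilon_{\OK},\varepsilon_{\OK'}>0$ for which the single, double, and adjoint double layer maps are $(\boldsymbol{b},p,\varepsilon_{\OV})$-, $(\boldsymbol{b},p,\varepsilon_{\OK})$-, and $(\boldsymbol{b},p,\varepsilon_{\OK'})$-holomorphic and continuous, and then set $\varepsilon\coloneqq\min\{\varepsilon_{\OV},\varepsilon_{\OK},\varepsilon_{\OK'}\}>0$. For this $\varepsilon$, any $(\boldsymbol{b},\varepsilon)$-admissible polyradius $\boldsymbol\rho$ (in the sense of \cref{eq:admissible_polyradius}) is simultaneously admissible for each of the three layer operators, so all three complex extensions exist and are holomorphic in each variable $z_j$ on the common set $\mathcal{O}_{\boldsymbol\rho}$. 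The finite linear combination above is then holomorphic in each $z_j$ there; the uniform bound over $\mathbb{U}$ and the bound \cref{eq:bpe_hol_bound_epsilon} over $\mathcal{E}_{\boldsymbol\rho}$ follow by the triangle inequality from the corresponding bounds for the summands and for $\half\mathsf{Id}$; and continuity over $\mathbb{U}$ is inherited since a finite sum of continuous maps is continuous. This verifies \cref{def:bpe_holomorphy1}, \cref{def:bpe_holomorphy2}, and \cref{def:bpe_holomorphy3} of \cref{def:bpe_holomorphy} for both maps in \cref{eq:combinedholomorphy}.

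I do not expect a genuine obstacle here: once \cref{thm:shape_hol_V}, \cref{thm:shape_hol_double_layer}, and \cref{thm:shape_hol_K_adj} are available, the statement is essentially bookkeeping. The single point that warrants a line of care is the passage to the common admissibility radius $\varepsilon=\min\{\varepsilon_{\OV},\varepsilon_{\OK},\varepsilon_{\OK'}\}$, which is harmless because the intersection of the three $(\boldsymbol{b},\cdot)$-admissibility constraints \cref{eq:admissible_polyradius} is again a constraint of the same form; everything else reduces to linearity of the pullback conjugation and to the elementary fact that finite linear combinations of holomorphic Banach-space-valued maps are holomorphic, the same fact that underlies Step \encircled{\sf C} in the proofs of \cref{thm:holomorphy_bounded_kernel} and \cref{thm:holomorphy_sing_kernel}.
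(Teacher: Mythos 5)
Your proposal is correct and follows exactly the paper's route: the paper's own proof is a one-line invocation of \cref{thm:shape_hol_V}, \cref{thm:shape_hol_double_layer}, and \cref{thm:shape_hol_K_adj}, and your argument simply spells out the implicit bookkeeping (linearity of pullback conjugation, taking the minimum admissibility radius, and closure of $(\boldsymbol{b},p,\varepsilon)$-holomorphy under finite linear combinations). No gap to report.
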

\begin{proof}
This result is a consequence of \cref{thm:shape_hol_V}, \cref{thm:shape_hol_double_layer}, and \cref{thm:shape_hol_K_adj}.
\end{proof}

\subsection{Parametric Shape Holomorphy of the Parameter-to-Solution Map}
We are interested in the depedence of the solution
to \cref{prb:problem_Aprime_k_n} and \cref{prbm:BIE_sound_soft_A}
upon the parametric boundary shape. More precisely, 
we are interested in establishing holomorphy of the \emph{parameter-to-solution} maps
\begin{equation}\label{eq:d2s_map_1}
	\mathbb{U}
	\ni
	\y
	\mapsto
	\hat{\phi}_\y
	\coloneqq
	\left(
		\hat{\OA}^{(\kappa,\eta)'}_{\y} 
	\right)^{-1}
	\hat{f}_\y
	\in L^2(\hat{\Gamma}),
\end{equation}
and
\begin{equation}\label{eq:d2s_map_2}
	\mathbb{U}
	\ni
	\y
	\mapsto
	\hat{\varphi}_\y
	\coloneqq
	\left(
		\hat{\OA}^{(\kappa,\eta)}_{\y} 
	\right)^{-1}
	\hat{g}_\y
	\in L^2(\hat{\Gamma}),
\end{equation}
where
\begin{align}\label{eq:pullbackrhs}
	\hat{f}_\y	
	\coloneqq
	\tau_\y {f}_{\Gamma_\y}
	\in 
	L^2(\hat{\Gamma})
	\quad
	\text{and}
	\quad
	\hat{g}_\y	
	\coloneqq
	\tau_\y {g}_{\Gamma_\y}
	\in 
	L^2(\hat{\Gamma}).
\end{align}
With the results stated in \cref{thm:shape_hol_V},
\cref{thm:shape_hol_double_layer}, and \cref{thm:shape_hol_K_adj},
together with \cref{prop:hol_maps_banach_spaces}, one can 
establish the following result.

\begin{corollary}
Let \cref{assump:parametric_boundary}  and \cref{assump:regularity_patches} be satisfied
with $\boldsymbol{b} \in \ell^p(\IN)$ and $p\in (0,1)$.
Then there exists $\varepsilon>0$ such that 
the parameter-to-solution maps introduced in
\cref{eq:d2s_map_1} and \cref{eq:d2s_map_2}
are $(\boldsymbol{b},p,\varepsilon)$-holomorphic and continuous.
\end{corollary}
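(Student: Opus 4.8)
The plan is to bring both parameter-to-solution maps into the framework of \cref{cor:par-to-sol-holomorphy} and then invoke it verbatim. That corollary needs three inputs: (i) the pulled-back parameter-to-operator maps $\y\mapsto\hat{\OA}^{(\kappa,\eta)}_\y$ and $\y\mapsto\hat{\OA}^{(\kappa,\eta)'}_\y$ are $(\boldsymbol{b},p,\varepsilon)$-holomorphic and continuous; (ii) for every $\y\in\mathbb{U}$ these operators lie in $\mathscr{L}_{\mathrm{iso}}(L^2(\hat{\Gamma}),L^2(\hat{\Gamma}))$; and (iii) the pulled-back right-hand sides $\y\mapsto\hat{f}_\y$ and $\y\mapsto\hat{g}_\y$ from \cref{eq:pullbackrhs} are $(\boldsymbol{b},p,\varepsilon)$-holomorphic and continuous as $L^2(\hat{\Gamma})$-valued maps. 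Input (i) is precisely \cref{cor:combinedholomorphy} (which in turn rests on \cref{thm:shape_hol_V}, \cref{thm:shape_hol_double_layer}, and \cref{thm:shape_hol_K_adj}). For (ii), the well-posedness statements recorded after \cref{prb:problem_Aprime_k_n} and \cref{prbm:BIE_sound_soft_A} give $\mathsf{A}^{(\kappa,\eta)}_{\Gamma_\y},\mathsf{A}^{(\kappa,\eta)'}_{\Gamma_\y}\in\mathscr{L}_{\mathrm{iso}}(L^2(\Gamma_\y),L^2(\Gamma_\y))$ for each fixed $\y$; conjugating with the isomorphism $\tau_\y\in\mathscr{L}_{\mathrm{iso}}(L^2(\Gamma_\y),L^2(\hat{\Gamma}))$ of \cref{lmm:pullback_operator} transfers this to the reference boundary, and continuity of $\y\mapsto\hat{\OA}^{(\kappa,\eta)}_\y$ together with compactness of $\mathbb{U}$ moreover yields a uniform bound on the inverses.

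The only part not immediate from earlier results is (iii). The incident field $u^{\mathrm{inc}}(\bx)=\exp(\imath\kappa\,\bx\cdot\boldsymbol{\hat{d}}_{\text{inc}})$ is entire on $\IR^3$, with $\nabla u^{\mathrm{inc}}(\bx)=\imath\kappa\,\boldsymbol{\hat{d}}_{\text{inc}}\,u^{\mathrm{inc}}(\bx)$, so that for $\hat{\bx}\in\hat{\Gamma}$, and recalling \cref{eq:pullbacknormal},
\[
(\tau_\y\gamma^{\cc}_{\Gamma_\y}u^{\mathrm{inc}})(\hat{\bx})=\exp\!\big(\imath\kappa\,\boldsymbol{r}_\y(\hat{\bx})\cdot\boldsymbol{\hat{d}}_{\text{inc}}\big),
\qquad
\left(\tau_\y\frac{\partial u^{\mathrm{inc}}}{\partial\mathbf{n}_{\Gamma_\y}}\right)(\hat{\bx})=\imath\kappa\,\big(\hat{\mathbf{n}}_\y(\hat{\bx})\cdot\boldsymbol{\hat{d}}_{\text{inc}}\big)\exp\!\big(\imath\kappa\,\boldsymbol{r}_\y(\hat{\bx})\cdot\boldsymbol{\hat{d}}_{\text{inc}}\big).
\]
By \cref{lmm:holomorphic_parametric_rep} the map $\y\mapsto\boldsymbol{r}_\y\in\mathscr{C}^{0,1}(\hat{\Gamma},\IR^3)$ is $(\boldsymbol{b},p,\varepsilon)$-holomorphic; composing with the bounded linear functional $\boldsymbol{v}\mapsto\boldsymbol{v}\cdot\boldsymbol{\hat{d}}_{\text{inc}}$ and then with the entire function $t\mapsto\exp(\imath\kappa t)$ preserves $(\boldsymbol{b},p,\varepsilon)$-holomorphy — exactly as the (polynomial) determinant was handled in the proof of \cref{lemma:gramian_holomorphic} — and delivers a $(\boldsymbol{b},p,\varepsilon)$-holomorphic map with values in $L^\infty(\hat{\Gamma})\hookrightarrow L^2(\hat{\Gamma})$. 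Multiplying by the $(\boldsymbol{b},p,\varepsilon)$-holomorphic map $\y\mapsto\hat{\mathbf{n}}_\y\in L^\infty(\hat{\Gamma})$ of \cref{lmm:normal_derivative}, and using that pointwise multiplication $L^\infty(\hat{\Gamma})\times L^\infty(\hat{\Gamma})\to L^\infty(\hat{\Gamma})$ is bounded bilinear, we obtain that $\y\mapsto\hat{f}_\y$ and $\y\mapsto\hat{g}_\y$ are $(\boldsymbol{b},p,\varepsilon)$-holomorphic and continuous into $L^2(\hat{\Gamma})$.

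With (i)--(iii) in hand, \cref{cor:par-to-sol-holomorphy} applied once to the pair $(\hat{\OA}^{(\kappa,\eta)'}_\y,\hat{f}_\y)$ and once to $(\hat{\OA}^{(\kappa,\eta)}_\y,\hat{g}_\y)$ shows that \cref{eq:d2s_map_1} and \cref{eq:d2s_map_2} are $(\boldsymbol{b},p,\varepsilon')$-holomorphic and continuous, where $\varepsilon'>0$ is the minimum of the finitely many admissibility parameters generated along the way. The main obstacle is precisely this bookkeeping across parameters: one must ensure that the $\varepsilon$ controlling the holomorphic extensions of $\y\mapsto\boldsymbol{r}_\y$, of $\y\mapsto\hat{\mathbf{n}}_\y$, of the two operator-valued maps of \cref{cor:combinedholomorphy}, and of the two right-hand side maps can all be forced down to a common positive value, and — inside the proof of \cref{cor:par-to-sol-holomorphy} — that bounded invertibility of $\hat{\OA}^{(\kappa,\eta)}_\z$ persists on a sufficiently small complex neighbourhood of $\mathbb{U}$, which costs a further reduction of $\varepsilon$ through the Neumann-series estimate of \cref{prop:hol_maps_banach_spaces1} in \cref{prop:hol_maps_banach_spaces}.
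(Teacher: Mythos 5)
Your proposal is correct and follows essentially the same route as the paper: invoke \cref{cor:combinedholomorphy} for the operator maps, transfer bounded invertibility of $\OA^{(\kappa,\eta)}_{\Gamma_\y}$ and $\OA^{(\kappa,\eta)'}_{\Gamma_\y}$ to the reference boundary via \cref{lmm:pullback_operator}, verify $(\boldsymbol{b},p,\varepsilon)$-holomorphy of $\y\mapsto\hat{f}_\y$ and $\y\mapsto\hat{g}_\y$ from the explicit plane-wave formulas together with \cref{lmm:holomorphic_parametric_rep} and \cref{lmm:normal_derivative}, and conclude by the argument of \cref{cor:par-to-sol-holomorphy} (i.e.\ \cref{prop:hol_maps_banach_spaces}). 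Your extra remarks on the $\varepsilon$-bookkeeping and the Neumann-series persistence of invertibility are consistent with, and slightly more explicit than, what the paper records.
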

\begin{proof}
The proof is in complete analogy to the proof of \cref{cor:par-to-sol-holomorphy}.
To this end, we remark that \cref{cor:combinedholomorphy} implies that there exists
an $\varepsilon>0$ such that the maps
$\mathcal{A}^{(\kappa,\eta)'}$ and $\mathcal{A}^{(\kappa,\eta)}$ 
are $(\boldsymbol{b},p,\varepsilon)$-holomorphic and continuous. Moreover, it holds
$\OA^{(\kappa,\eta)'}_{\Gamma},\OA^{(\kappa,\eta)}_{\Gamma}\in\mathscr{L}_{\normalfont{\text{iso}}}(\Gamma_\y,\Gamma_\y)$
for all $\y\in\mathbb{U}$.
Thus, \cref{lmm:pullback_operator} implies that
\begin{equation}
	\hat{\mathsf{A}}_\y^{(\kappa,\eta)},
	\;
	\hat{\mathsf{A}}_\y^{(\kappa,\eta)'}
	\in
	\mathscr{L}_{\normalfont{\text{iso}}}
	\left(
		L^2(\hat{\Gamma})
        ,
        L^2(\hat\Gamma)
	\right)
\end{equation}
for each $\y\in\mathbb{U}$.

It only remains to show the parametric holomorphy of $\y\mapsto\hat{f}_\y$
and $\y \mapsto \hat{g}_\y$.
To this end, in \cref{prb:problem_Aprime_k_n}, the right-hand side is given
through a plane incident wave $u^\text{i}(\bx):=\exp(\imath
\kappa \bx \cdot \boldsymbol{\hat{d}}_{\text{inc}})$, yielding
\begin{equation}
	\hat{f}_\y
	(\hat{\bx})
	=
	\kappa
	\hat{\mathbf{n}}_\y(\hat{\bx})
	\cdot
	\boldsymbol{\hat{d}}_{\text{inc}}
	\exp
	\left(
		\imath\kappa \boldsymbol{r}_\y(\hat{\bx})
		\cdot 
		\boldsymbol{\hat{d}}_{\text{inc}}
	\right)
	-
	\imath \eta
	\exp
	\left(
		\imath\kappa \boldsymbol{r}_\y(\hat{\bx})
		\cdot 
		\boldsymbol{\hat{d}}_{\text{inc}}
	\right).
\end{equation}
\cref{lmm:holomorphic_parametric_rep,lmm:normal_derivative} imply that
that $\mathbb{U} \ni \y \mapsto \hat{f}_\y\in L^2(\hat{\Gamma})$
is indeed $(\boldsymbol{b},p,\varepsilon)$-holomorphic and continuous
for some $\varepsilon>0$.
Similarly, for \cref{prbm:BIE_sound_soft_A}, we have
\begin{equation}
	\hat{g}_\y
	(\hat{\bx})
	=
	-
	\exp
	\left(
		\imath\kappa \boldsymbol{r}_\y(\hat{\bx})
		\cdot 
		\boldsymbol{\hat{d}}_{\text{inc}}
	\right),
	\quad
	\hat{\bx}
	\in
	\hat\Gamma,
\end{equation}
for which it follows with \cref{lmm:holomorphic_parametric_rep}
that $\mathbb{U} \ni \y \mapsto \hat{g}_\y\in L^2(\hat{\Gamma})$
is $(\boldsymbol{b},p,\varepsilon)$-holomorphic and continuous
for some $\varepsilon>0$.
\end{proof}

\subsection{Parametric Shape Holomorphy of the Far-Field Pattern}\label{ssec:shape_hol_far_field}
For later reference, following \cite[Chapter 5]{CK12}, we also consider the
{\it far-field patterns} of the single and double
layer potentials $\mathcal{S}^{(\kappa)}_{\Gamma}$ and $\mathcal{D}^{(\kappa)}_{\Gamma}$ introduced
in \cref{sec:bios_def}. For $\lambda\in L^2(\Gamma)$ and an 
\emph{observation direction} $\boldsymbol{d}_o\in \mathbb{S}^2$ they
are defined as
\begin{align}
	\text{FF}^{(\kappa)}_{\mathcal{S},\Gamma}(\lambda)
	&
	\coloneqq
	{\frac{1}{4\pi}}
	\int\limits_{\Gamma}
	\exp(-\imath \kappa \, \boldsymbol{d}_o \cdot \by)
	\lambda(\by)
	\text{d} \text{s}_{{\by}},\\
	\text{FF}^{(\kappa)}_{\mathcal{D},\Gamma} (\lambda)
	&
	\coloneqq
	{-\imath
	{\frac{\kappa}{4\pi}}}
	\int\limits_{\Gamma}
	\left(\boldsymbol{d}_o \cdot \mathbf{n}_{\Gamma}\right)
	\exp(-\imath \kappa \,\boldsymbol{d}_o \cdot \by)
	\lambda(\by)
	\text{d} \text{s}_{{\by}},
\end{align}
respectively. In the following, we show that these far-field patterns
are shape holomorphic as well. To this end, for each  $\y \in \mathbb{U}$,
we set 
\begin{align}\label{eq:farfieldpullbacks}
	\hat{\text{FF}}^{(\kappa)}_{\mathcal{S},\y}
	\coloneqq
	\tau_\y
	\,
	{\text{FF}}^{(\kappa)}_{\mathcal{S},\Gamma_\y}
	\,
	\tau^{-1}_\y
	\quad
	\text{and}
	\quad
	\hat{\text{FF}}^{(\kappa)}_{\mathcal{D},\y}
	\coloneqq
	\tau_\y
	\,
	{\text{FF}}^{(\kappa)}_{\mathcal{D},\Gamma_\y}
	\,
	\tau^{-1}_\y.
\end{align}
\begin{lemma}\label{lmm:holomorphic_farfield}
Let \cref{assump:parametric_boundary} and \cref{assump:regularity_patches} be satisfied
with $\boldsymbol{b} \in \ell^p(\IN)$ and $p\in (0,1)$.
Assume that there exists $\varepsilon'>0$
such that the map $\mathbb{U} \ni  \y \mapsto \hat{\lambda}_\y \in L^2(\hat{\Gamma})$
is $(\boldsymbol{b},p,\varepsilon')$-holomorphic and continuous.
Then there exists $\varepsilon>0$ such that
\begin{align}\label{eq:shape_hol_farfield}
	\mathbb{U}
	\ni
	\y
	\mapsto
	\normalfont\hat{\text{FF}}^{(\kappa)}_{\mathcal{S},\y}
	\left(
		 \hat{\lambda}_\y
	\right)
	\in \IC
	\quad
	\text{and}
	\quad
	\mathbb{U}
	\ni
	\y
	\mapsto
	\normalfont\hat{\text{FF}}^{(\kappa)}_{\mathcal{D},\y}
	\left(
		 \hat{\lambda}_\y
	\right)
    \in \IC
\end{align} 
are $(\boldsymbol{b},p,\varepsilon)$-holomorphic and continuous.
\end{lemma}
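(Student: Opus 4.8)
The plan is to reduce the assertion to the building blocks already established. Writing the pullback far-field functionals over the patch decomposition $\mathcal{G}$ of $\hat\Gamma$, and using that $\tau_\y$ acts trivially on scalars, a change of variables gives for $\hat\lambda\in L^2(\hat\Gamma)$
\begin{align*}
	\hat{\text{FF}}^{(\kappa)}_{\mathcal{S},\y}(\hat\lambda)
	=
	-\frac{\exp(\imath\pi/4)}{\sqrt{2\pi\kappa}}
	\sum_{\tau\in\mathcal{G}}
	\int\limits_{\tilde{\tau}}
	\exp\big(-\imath\kappa\,\boldsymbol{d}_o\cdot(\boldsymbol{r}_\y\circ\chi_\tau)(\tilde{\mathbf{u}})\big)\,
	(\hat\lambda\circ\chi_\tau)(\tilde{\mathbf{u}})\,
	\sqrt{g_{\tau,\y}(\tilde{\mathbf{u}})}\,
	\mathrm{d}\tilde{\mathbf{u}},
\end{align*}
and analogously for $\mathcal{D}$ with the additional factor $\boldsymbol{d}_o\cdot\hat{\mathbf{n}}_\y$ inside the integral. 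The crucial observation is that these are bounded \emph{linear functionals} on $L^2(\hat\Gamma)$ with bounded kernels that admit entire complex extensions obtained by replacing $\boldsymbol{r}_\y$ by its holomorphic extension $\boldsymbol{r}_\z$ from \cref{lmm:holomorphic_parametric_rep}. Accordingly I would argue in two steps: first establish $(\boldsymbol{b},p,\varepsilon_0)$-holomorphy of the \emph{functional-valued} maps $\y\mapsto\hat{\text{FF}}^{(\kappa)}_{\mathcal{S},\y},\hat{\text{FF}}^{(\kappa)}_{\mathcal{D},\y}\in\mathscr{L}(L^2(\hat\Gamma),\IC)$, and then compose with the map $\y\mapsto\hat\lambda_\y$ through the application map of \cref{prop:hol_maps_banach_spaces}.

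For the first step I would mimic the proof of \cref{thm:holomorphy_bounded_kernel} in the simpler situation of a bounded kernel and a scalar codomain. For each $\tau\in\mathcal{G}$ the pullback kernel $\tilde{\mathbf{u}}\mapsto\exp(-\imath\kappa\,\boldsymbol{d}_o\cdot(\boldsymbol{r}_\y\circ\chi_\tau)(\tilde{\mathbf{u}}))$ is $(\boldsymbol{b},p,\varepsilon)$-holomorphic into $L^\infty(\tilde{\tau})$, because $\y\mapsto\boldsymbol{r}_\y$ is affine and holomorphic (\cref{lmm:holomorphic_parametric_rep}) and $t\mapsto\exp(-\imath\kappa t)$ is entire. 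Multiplying by $\sqrt{g_{\tau,\y}}$ (\cref{lemma:gramian_holomorphic}) and, for $\mathcal{D}$, also by $\boldsymbol{d}_o\cdot\hat{\mathbf{n}}_\y$ (\cref{lmm:normal_derivative}) yields an $L^\infty(\tilde{\tau})$-valued holomorphic kernel $\mathsf{b}_{\tau,\z}$; by Cauchy-Schwarz the induced functional satisfies $\snorm{\hat{\text{FF}}_{\tau,\z}(\hat\varphi)}\leq C_\tau\norm{\mathsf{b}_{\tau,\z}}_{L^\infty(\tilde{\tau})}\norm{\hat\varphi}_{L^2(\hat\Gamma)}$, which plays the role of the Riesz-Thorin bound in \cref{thm:holomorphy_bounded_kernel}. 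Cauchy's integral formula bounds the $z_j$-derivatives of $\mathsf{b}_{\tau,\z}$ exactly as in \cref{thm:holomorphy_bounded_kernel}, and the Taylor-expansion argument of its Step~\encircled{\sf B} then shows that $\z\mapsto\hat{\text{FF}}_{\tau,\z}$ is holomorphic in each variable on $\mathcal{O}_{\boldsymbol\rho}$ with the required uniform bounds on $\mathbb{U}$ and on $\mathcal{E}_{\boldsymbol\rho}$. Summing over the finitely many patches gives $(\boldsymbol{b},p,\varepsilon_0)$-holomorphy and continuity of the functional-valued far-field maps.

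For the second step I would set $\varepsilon\coloneqq\min(\varepsilon_0,\varepsilon')$ with $\varepsilon'$ the parameter from the hypothesis on $\y\mapsto\hat\lambda_\y$. For any $(\boldsymbol{b},\varepsilon)$-admissible $\boldsymbol\rho$, both $\z\mapsto\hat{\text{FF}}^{(\kappa)}_{\bullet,\z}$ and $\z\mapsto\hat\lambda_\z$ admit holomorphic extensions to $\mathcal{O}_{\boldsymbol\rho}$ that are bounded on $\mathcal{E}_{\boldsymbol\rho}$. Since the application map $\text{app}\colon(\mathscr{L}(L^2(\hat\Gamma),\IC),L^2(\hat\Gamma))\to\IC$ is holomorphic by \cref{prop:hol_maps_banach_spaces3} in \cref{prop:hol_maps_banach_spaces}, and compositions of holomorphic maps are holomorphic, the map $\z\mapsto\hat{\text{FF}}^{(\kappa)}_{\bullet,\z}(\hat\lambda_\z)$ is holomorphic on $\mathcal{O}_{\boldsymbol\rho}$; the bound $\snorm{\hat{\text{FF}}^{(\kappa)}_{\bullet,\z}(\hat\lambda_\z)}\leq\norm{\hat{\text{FF}}^{(\kappa)}_{\bullet,\z}}_{\mathscr{L}(L^2(\hat\Gamma),\IC)}\norm{\hat\lambda_\z}_{L^2(\hat\Gamma)}$ together with \cref{def:bpe_holomorphy1} and \cref{def:bpe_holomorphy3} for the two factors supplies the uniform bound on $\mathbb{U}$ and the bound on $\mathcal{E}_{\boldsymbol\rho}$, and continuity is inherited from the two factors and from $\text{app}$. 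This verifies all three items of \cref{def:bpe_holomorphy}.

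The main obstacle is the first step: \cref{thm:holomorphy_bounded_kernel} cannot be invoked verbatim since the far-field maps take values in $\IC$ rather than in $L^2(\hat\Gamma)$, so its proof must be re-run with the Riesz-Thorin estimate replaced by a Cauchy-Schwarz estimate; this is routine, and the only real bookkeeping is keeping track of the various admissibility radii and choosing a common $\varepsilon$ in the composition step.
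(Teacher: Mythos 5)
Your proposal is correct and follows essentially the same route as the paper: both reduce to the patchwise representation and the already-established holomorphy of $\boldsymbol{r}_\y$, $\sqrt{g_{\tau,\y}}$ and $\hat{\mathbf{n}}_\y$ (\cref{lmm:holomorphic_parametric_rep}, \cref{lemma:gramian_holomorphic}, \cref{lmm:normal_derivative}), and then pair the resulting holomorphic kernel with the holomorphic density $\y\mapsto\hat{\lambda}_\y$. The only difference is packaging: the paper treats the kernels as $L^2(\tilde{\tau})$-valued holomorphic maps and leaves the final bilinear pairing implicit, whereas you route through the functional-valued map and the application map of \cref{prop:hol_maps_banach_spaces}, which is a more explicit (and, note, even simpler than your Step~1 suggests, since the kernel-to-functional assignment is itself bounded linear, so no Taylor argument needs to be re-run) way of making the same point.
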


\begin{proof}
For each $\y \in \mathbb{U}$, the explicit representation of \cref{eq:farfieldpullbacks}
is
\begin{align}
	\hat{\text{FF}}^{(\kappa)}_{\mathcal{S},\y}(\hat{\lambda}_\y)
	&=
	{\frac{1}{4\pi}}
    	\sum_{\tau \in \mathcal{G}} 
	\int\limits_{\widetilde{\tau}} 
	\hat{k}^{(\kappa)}_{\mathcal{S},\y}(\widetilde{\mathbf{u}})
	\left(
		\hat{\lambda}_\y \circ\chi_{\tau}
	\right)
	(\widetilde{\mathbf{u}})
	\mathrm{d} \widetilde{\mathbf{u}},\\
  	  \hat{\text{FF}}^{(\kappa)}_{\mathcal{D},\y}(\hat{\lambda}_\y)
	&=
	{-\imath
	{\frac{\kappa}{4\pi}}}
	\sum_{\tau \in \mathcal{G}} 
	\int\limits_{\widetilde{\tau}} 
	\hat{k}^{(\kappa)}_{\mathcal{D},\y}(\widetilde{\mathbf{u}})
	\left(
		\hat{\lambda}_\y \circ\chi_{\tau}
	\right)
	(\widetilde{\mathbf{u}})
	\mathrm{d} \widetilde{\mathbf{u}},
\end{align}
where $\hat{\lambda}_\y \coloneqq \tau_\y \lambda \in L^2(\hat{\Gamma})$ and
\begin{align}
    \hat{k}^{(\kappa)}_{\mathcal{S},\y}(\widetilde{\mathbf{u}})
    &=
    \exp(-\imath \kappa \, \boldsymbol{d}_o \cdot \boldsymbol{r}_\y \circ \chi_{\tau}(\widetilde{\mathbf{u}}))
	\sqrt{g_{\tau,\y}(\widetilde{\mathbf{u}})}\\
    \hat{k}^{(\kappa)}_{\mathcal{D},\y}(\widetilde{\mathbf{u}})
    &=
    \left(\boldsymbol{d}_o \cdot \hat{\mathbf{n}}_{\y}\right)(\chi_{\tau}(\widetilde{\mathbf{u}}))
	\exp(-\imath \kappa \, \boldsymbol{d}_o \cdot \boldsymbol{r}_\y \circ \chi_{\tau}(\widetilde{\mathbf{u}}))
	\sqrt{g_{\tau,\y}(\widetilde{\mathbf{u}})}.
\end{align}
Therein, according to \cref{lmm:normal_derivative} and \cref{lemma:gramian_holomorphic}
the maps $\mathbb{U} \ni  \y \mapsto\hat{\mathbf{n}}_{\y} \in L^{\infty}(\hat{\Gamma},\IR^3)$ 
and $\mathbb{U} \ni  \y \mapsto \sqrt{g_{\tau,\y}} \in L^\infty(\widetilde{\tau})$ 
are $(\boldsymbol{b},p,\varepsilon)$-holomorphic and continuous, 
respectively. 
Consequently, for each $\tau \in \mathcal{G}$ the maps
\begin{equation}
	\mathbb{U}
	\ni
	\y
	\mapsto
	\hat{k}^{(\kappa)}_{\mathcal{S},\y}(\widetilde{\mathbf{u}})
	\in 
	L^2(\widetilde{\tau})
    \quad
    \text{and}
    \quad
	\mathbb{U}
	\ni
	\y
	\mapsto
	\hat{k}^{(\kappa)}_{\mathcal{D},\y}(\widetilde{\mathbf{u}})
	\in 
	L^2(\widetilde{\tau})
\end{equation}
are $(\boldsymbol{b},p,\varepsilon)$-holomorphic and continuous, thus
rendering the maps in \cref{eq:shape_hol_farfield} as well.
\end{proof}

\section{Implications in High-dimensional Approximation}\label{sec:HDA}
In this section, we explore the implications of the presently
established parametric holomorphy result on examples
in sparse polynomial approximation and
reduced order modelling, in Bayesian shape inversion, and
in expression rates of artificial neural networks.


\subsection{Sparse Polynomial Approximation and Reduced Order Modelling}\label{sec:sparsepolynomial}
Our aim in the following is to derive best $n$-term polynomial approximation
results to the parameter-to-solution maps
\cref{eq:d2s_map_1} and \cref{eq:d2s_map_2} of \cref{prb:problem_Aprime_k_n}
and \cref{prbm:BIE_sound_soft_A}
and discuss its consequences. That is, we aim to approximate
\begin{equation}\label{eq:parametric_map_app}
	\Phi
	:
	\mathbb{U}
	\rightarrow
	L^2(\hat\Gamma)
	:
	\y
	\mapsto
    \hat{\phi}_\y,
	\hat{\varphi}_\y,
\end{equation}
by polynomials of finite degree by truncating the infinite expansion
\begin{align}\label{eq:legendreexpansion}
	\Phi(\y)
	=
	\sum_{\boldsymbol{\nu} \in \mathcal{F}} 
	c_{\boldsymbol{\nu}} P_{\boldsymbol{\nu}}(\y)
\end{align}
where
\begin{align}\label{eq:infinitepolynomial}
	\mathcal{F}
	\coloneqq
	\left\{
		\boldsymbol{\nu} \in \mathbb{N}_0^{\mathbb{N}}:|\boldsymbol{\nu}|
		<
		\infty
	\right\}
\end{align}
and
$P_{\boldsymbol{\nu}}(\boldsymbol{y})\coloneqq \prod_{j \geq 1} P_{\nu_j}\left(y_j\right)$, with $P_n$ denoting univariate polynomials of degree $n$.
Aiming to recall a result from \cite{CCS15}, we consider Legendre polynomials $P_n=P_n^{(q)}$ on the interval $[-1, 1]$ with normalization constraints $\|P_n^{(q)}\|_{L^{q}([-1,1])}=1$, $q\in\{2,\infty\}$.
The coefficients in \cref{eq:infinitepolynomial} are denoted by $(c_{\boldsymbol{\nu}}^{(q)})_{\boldsymbol{\nu}\in\mathcal{F}}$ accordingly.

We say that a set $\Lambda \subset \mathcal{F}$ is downward closed if 
$\boldsymbol{\nu} \in \Lambda$ and $\boldsymbol{\mu} \leq \boldsymbol{\nu}$
implies $\boldsymbol{\mu} \in \Lambda$. 
To state the sought result, we define for
$c:=\left(c_{\boldsymbol{\nu}}\right)_{\boldsymbol{\nu} \in \mathcal{F}} \subset \mathbb{R}$
its downward closed envelope as
$\mathbf{c}_{\boldsymbol{\nu}}\coloneqq \sup _{\boldsymbol{\mu} \geq \boldsymbol{\nu}}\norm{c_{\boldsymbol{\mu}}}_{L^2(\hat{\Gamma})}$ for $\boldsymbol{\nu} \in \mathcal{F}$,
where $\boldsymbol{\mu} \geq \boldsymbol{\nu}$ signifies that the inequality
is satisfied component wise. It is the smallest monotone decreasing
sequence which bounds $\left(\norm{c_{\boldsymbol{\nu}}}_{L^2(\hat{\Gamma})}\right)_{\boldsymbol{\nu} \in \mathcal{F}}$
element wise. In addition, for $p>0$ 
we define $\ell_m^p(\mathcal{F})$ as the space of sequences with 
downward closed envelope in $\ell^p(\mathcal{F})$.

For $1 \leq p \leq \infty$, we denote 
by $L^p(\mathbb{U} ; V)$ the Lebesgue-Bochner space consisting of strongly $\varrho$-measurable functions $f: \mathbb{U} \rightarrow {V}$ satisfying
$\|f\|_{L^p(\mathbb{U} ; V)}<\infty$, with
\begin{equation}
	\|f\|_{L^p(\mathbb{U} ; V)}
	:= 
	\begin{cases}
		\left(\displaystyle\int_{\mathbb{U}}\|f(\boldsymbol{y})\|_{{V}}^p 
		\mathrm{~d} \varrho(\boldsymbol{y})\right)^{1 / p} & 1 \leq p<\infty \\ 
		\underset{\y \in \mathbb{U}}{\operatorname{ess} \sup} 
		\|f(\boldsymbol{y})\|_{V} & p=\infty,
	\end{cases}
\end{equation}
where $\mathrm{d} \varrho(\boldsymbol{y})$ signifies the
tensor product of the uniform measure in $[-1,1]$, which in turn defines
a measure in $\mathbb{U}$. In particular, we set $L^p(\mathbb{U}) = L^p(\mathbb{U} ; \IR)$,
for $1 \leq p \leq \infty$.

The next result, which is an application of the main result
presented in \cite{CCS15}, addresses the best $n$-term
polynomial approximation of the map in \cref{eq:parametric_map_app}.

\begin{theorem}[{\cite[Theorem 2.2, ff.]{CCS15}}]\label{thm:sparsepolynomial}
Let \cref{assump:parametric_boundary} and \cref{assump:regularity_patches}
be satisfied with $\boldsymbol{b} \in \ell^p(\IN)$ and $p\in (0,1)$.
Then, $\left(\norm{c_{\boldsymbol{\nu}}^{(q)}}_{L^2(\hat{\Gamma})} \right)_{\boldsymbol{\nu} \in \mathcal{F}}$,
$q\in\{2,\infty\}$, from \cref{eq:legendreexpansion}
belong to $\ell^p_m(\mathcal{F})$ and there exists $C>0$ such that 
for each $n \in \IN$ there are finite, downward closed
sets $\Lambda^2_n,\Lambda^\infty_n \in \mathcal{F}$
of cardinality $n$ such that
\begin{equation}
	\norm{
		\Phi
		-
		\sum_{\boldsymbol{\nu} \in \Lambda^\infty_n} 
		c_{\boldsymbol{\nu}}^{(\infty)}
		P_{\boldsymbol{\nu}}^{(\infty)}
	}_{L^{\infty}(\mathbb{U};L^2(\hat{\Gamma}))} 
	\leq 
	C(n+1)^{
	-
	\left(
	\frac{1}{p}-1
	\right)
	}, 
\end{equation}
and
\begin{equation}
	\norm{
		\Phi
		-
		\sum_{{\boldsymbol{\nu}} \in \Lambda^2_n} 
		c_{\boldsymbol{\nu}}^{(2)}
		P_{\boldsymbol{\nu}}^{(2)}
	}_{L^{2}(\mathbb{U};L^2(\hat{\Gamma}))} 
	\leq 
	C(n+1)^{
	-
	\left(
	\frac{1}{p}-\frac{1}{2}
	\right)
	}.
\end{equation}
\end{theorem}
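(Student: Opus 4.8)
The plan is to deduce the statement directly from the parametric holomorphy results already established, fed into the abstract best $n$-term approximation theory of \cite{CCS15}; no new analysis is required, only a careful check that the hypotheses match. First I would recall that the map $\Phi:\mathbb{U}\to L^2(\hat\Gamma)$ in \cref{eq:parametric_map_app} — i.e.\ either $\y\mapsto\hat\phi_\y$ or $\y\mapsto\hat\varphi_\y$ — is $(\boldsymbol{b},p,\varepsilon)$-holomorphic and continuous for some $\varepsilon>0$: this is precisely the content of the corollary on the parameter-to-solution maps of \cref{prb:problem_Aprime_k_n} and \cref{prbm:BIE_sound_soft_A}, which itself rests on \cref{thm:shape_hol_V}, \cref{thm:shape_hol_double_layer}, \cref{thm:shape_hol_K_adj}, \cref{cor:combinedholomorphy}, the holomorphy of the inversion and application maps in \cref{prop:hol_maps_banach_spaces}, the holomorphy of the pulled-back right-hand sides, and ultimately on the abstract mechanism of \cref{cor:par-to-sol-holomorphy}. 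Crucially, the summable sequence $\boldsymbol{b}\in\ell^p(\IN)$ governing this holomorphy is the one furnished by \cref{assump:parametric_boundary3} in \cref{assump:parametric_boundary}, with the \emph{same} exponent $p\in(0,1)$, and $L^2(\hat\Gamma)$ is a separable complex Hilbert space, so the hypotheses of \cite[Theorem 2.2 ff.]{CCS15} are met verbatim.

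Second, I would invoke the coefficient-summability conclusion of that theorem: $(\boldsymbol{b},p,\varepsilon)$-holomorphy of $\Phi$ in the sense of \cref{def:bpe_holomorphy} implies that the Legendre coefficient sequences $\left(\norm{c_{\boldsymbol{\nu}}^{(q)}}_{L^2(\hat\Gamma)}\right)_{\boldsymbol{\nu}\in\mathcal{F}}$ belong to $\ell^p_m(\mathcal{F})$ for both normalizations $q\in\{2,\infty\}$. Rather than reproduce the estimate, I would cite it: the argument of \cite{CCS15} bounds each coefficient by Cauchy's integral formula applied on the poly-ellipses $\mathcal{E}_{\boldsymbol{\rho}}$ — using exactly the uniform bound \cref{eq:bpe_hol_bound_epsilon} over all $(\boldsymbol{b},\varepsilon)$-admissible poly-radii — then optimizes $\boldsymbol{\rho}$ against $\boldsymbol{b}$ and the budget $\varepsilon$, and sums, inheriting the $\ell^p$-summability (hence $\ell^p_m$-summability of the downward closed envelope) from $\boldsymbol{b}\in\ell^p(\IN)$.

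Third, from $\ell^p_m$-summability I would conclude the two displayed rates by the standard Stechkin-type best $n$-term selection. For each $n$, choose $\Lambda^q_n\subset\mathcal{F}$ to be a downward closed set of cardinality $n$ capturing the $n$ largest entries of the downward closed envelope of $\left(\norm{c_{\boldsymbol{\nu}}^{(q)}}_{L^2(\hat\Gamma)}\right)_{\boldsymbol{\nu}}$. In the $L^\infty$-normalized case, $\norm{P_{\boldsymbol{\nu}}^{(\infty)}}_{L^\infty(\mathbb{U})}=1$ gives $\norm{\Phi-\sum_{\boldsymbol{\nu}\in\Lambda^\infty_n}c_{\boldsymbol{\nu}}^{(\infty)}P_{\boldsymbol{\nu}}^{(\infty)}}_{L^\infty(\mathbb{U};L^2(\hat\Gamma))}\le\sum_{\boldsymbol{\nu}\notin\Lambda^\infty_n}\norm{c_{\boldsymbol{\nu}}^{(\infty)}}_{L^2(\hat\Gamma)}$, and Stechkin's lemma with $\boldsymbol{b}$-exponent $p$ bounds this tail by $C(n+1)^{-(1/p-1)}$. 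In the $L^2$-normalized case, orthonormality of $\{P_{\boldsymbol{\nu}}^{(2)}\}$ in $L^2(\mathbb{U};\mathrm{d}\varrho)$ and Parseval give $\norm{\Phi-\sum_{\boldsymbol{\nu}\in\Lambda^2_n}c_{\boldsymbol{\nu}}^{(2)}P_{\boldsymbol{\nu}}^{(2)}}_{L^2(\mathbb{U};L^2(\hat\Gamma))}=\big(\sum_{\boldsymbol{\nu}\notin\Lambda^2_n}\norm{c_{\boldsymbol{\nu}}^{(2)}}_{L^2(\hat\Gamma)}^2\big)^{1/2}\le C(n+1)^{-(1/p-1/2)}$, which are exactly the asserted bounds.

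I expect there to be essentially no serious obstacle, since all the analytic labor — the $(\boldsymbol{b},p,\varepsilon)$-holomorphy of the boundary integral operators, their inverses, and the right-hand sides — is already in place. The only point that deserves explicit care is bookkeeping: verifying that the holomorphy proved in the earlier sections is literally in the form of \cref{def:bpe_holomorphy}, so that \cite[Theorem 2.2 ff.]{CCS15} applies without modification, and emphasizing that the single exponent $p$ controlling $\boldsymbol{b}\in\ell^p(\IN)$ is inherited, unchanged, by the coefficient envelopes. This is precisely what produces the algebraic rates with exponents $1/p-1$ and $1/p-\tfrac12$ that are \emph{independent of the parametric dimension}, thereby overcoming the curse of dimensionality.
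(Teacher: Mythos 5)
Your proposal is correct and follows essentially the same route as the paper: the paper offers no separate proof but states the result as a direct application of \cite[Theorem 2.2 ff.]{CCS15} to the map $\Phi$ of \cref{eq:parametric_map_app}, whose $(\boldsymbol{b},p,\varepsilon)$-holomorphy is exactly what the preceding corollaries establish. Your explicit spelling-out of the Stechkin-type tail bounds and the coefficient-envelope summability is just the content of the cited theorem, so there is nothing to add or correct.
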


We explore the implications of our parametric holomorphy
result in the context of reduced order modelling. There,
the goal is to find a low-dimensional affine space
approximating the \emph{solution manifold}
\begin{align}
	\mathcal{M}
	\coloneqq
	\left\{
		\Phi(\y):
		\;
		\y \in \mathbb{U}
	\right\}
\end{align}
to \cref{eq:parametric_map_app} up to a given accuracy.
A commonly used concept in nonlinear approximation to 
quantify the approximation error to such solution manifolds
is the so-called Kolmogorov's $n$-width.
For a compact subset $\mathcal{K}$ of a Banach space
$X$ it is defined for $n\in \IN$ as
\begin{align}
	d_n(\mathcal{K})_X
	\coloneqq
	\inf _{\substack{X_n\subset X\\\operatorname{dim}\left(X_n\right) \leq n}} 
	\sup _{v \in \mathcal{K}} 
	\inf _{w \in X_n}
	\norm{v-w}_X.
\end{align}
As a direct consequence of \cref{thm:sparsepolynomial} we obtain
the following bound on Kolmogorov's $n$-width, giving an upper
bound on the minimal number of dimensions to reach a certain approximation
accuracy to the solution manifold.
\begin{corollary}
Let \cref{assump:parametric_boundary} and \cref{assump:regularity_patches} be satisfied
with $\boldsymbol{b} \in \ell^p(\IN)$ and $p\in (0,1)$. Then
\begin{equation}
	d_n(\mathcal{M})_{L^2(\hat\Gamma)}
	\leq
	C(n+1)^{
	-
	\left(
	\frac{1}{p}-1
	\right)
	}.
\end{equation}
\end{corollary}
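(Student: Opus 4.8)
The plan is to use \cref{thm:sparsepolynomial}, which has already been established under exactly the hypotheses of this corollary, and to observe that a best $n$-term Legendre partial sum is nothing but a (nonlinear) parametrization of an at-most-$n$-dimensional linear space. First I would recall that by the preceding results the map $\Phi$ in \cref{eq:parametric_map_app} is $(\boldsymbol{b},p,\varepsilon)$-holomorphic and continuous, and that $\mathbb{U}$ is compact for the product topology, so that $\mathcal{M}=\Phi(\mathbb{U})$ is a compact subset of $L^2(\hat{\Gamma})$ and $d_n(\mathcal{M})_{L^2(\hat{\Gamma})}$ is well defined.

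Next, for each $n\in\IN$ let $\Lambda^\infty_n\subset\mathcal{F}$ be the downward closed index set of cardinality $n$ furnished by \cref{thm:sparsepolynomial}, and set
\[
	X_n
	\coloneqq
	\operatorname{span}
	\left\{
		c^{(\infty)}_{\boldsymbol{\nu}}
		:
		\boldsymbol{\nu}\in\Lambda^\infty_n
	\right\}
	\subset
	L^2(\hat{\Gamma}),
\]
so that $\dim(X_n)\leq \#\Lambda^\infty_n = n$. For any $\y\in\mathbb{U}$ the polynomial partial sum $\sum_{\boldsymbol{\nu}\in\Lambda^\infty_n} c^{(\infty)}_{\boldsymbol{\nu}} P^{(\infty)}_{\boldsymbol{\nu}}(\y)$ is an element of $X_n$, since the scalars $P^{(\infty)}_{\boldsymbol{\nu}}(\y)$ are just coefficients. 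Hence, for every $v=\Phi(\y)\in\mathcal{M}$,
\[
	\inf_{w\in X_n}\norm{v-w}_{L^2(\hat{\Gamma})}
	\leq
	\norm{
		\Phi(\y)
		-
		\sum_{\boldsymbol{\nu}\in\Lambda^\infty_n}
		c^{(\infty)}_{\boldsymbol{\nu}} P^{(\infty)}_{\boldsymbol{\nu}}(\y)
	}_{L^2(\hat{\Gamma})}
	\leq
	\norm{
		\Phi
		-
		\sum_{\boldsymbol{\nu}\in\Lambda^\infty_n}
		c^{(\infty)}_{\boldsymbol{\nu}} P^{(\infty)}_{\boldsymbol{\nu}}
	}_{L^{\infty}(\mathbb{U};L^2(\hat{\Gamma}))}.
\]
Taking the supremum over $v\in\mathcal{M}$ and then the infimum over all subspaces of dimension at most $n$ in the definition of the Kolmogorov width, the right-hand side is bounded by $C(n+1)^{-(1/p-1)}$ by \cref{thm:sparsepolynomial}, which is precisely the claimed estimate.

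There is essentially no obstacle here: the entire content is the reduction to \cref{thm:sparsepolynomial}, and the only point requiring a moment of care is the bookkeeping that the span of the $n$ coefficient functions $(c^{(\infty)}_{\boldsymbol{\nu}})_{\boldsymbol{\nu}\in\Lambda^\infty_n}$ has dimension at most $n$ and contains the partial sum for every parameter value simultaneously — so that a single linear space $X_n$ works uniformly in $\y$. The compactness of $\mathcal{M}$, needed only so that the notion $d_n(\mathcal{M})_{L^2(\hat\Gamma)}$ from the definition applies, follows from continuity of $\Phi$ together with Tychonoff compactness of $\mathbb{U}$.
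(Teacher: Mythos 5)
Your argument is correct and is exactly the reduction the paper has in mind: the paper states this corollary as a direct consequence of \cref{thm:sparsepolynomial} without writing out the details, and your choice of $X_n=\operatorname{span}\{c^{(\infty)}_{\boldsymbol{\nu}}:\boldsymbol{\nu}\in\Lambda^\infty_n\}$ together with the $L^\infty(\mathbb{U};L^2(\hat\Gamma))$ bound is the standard way to make that implication explicit. The only (negligible) point you gloss over is that the $L^\infty(\mathbb{U};L^2(\hat\Gamma))$ norm is an essential supremum while the $n$-width needs a genuine supremum over $\mathcal{M}$, which is harmless here since $\y\mapsto\Phi(\y)$ and the polynomial partial sum are continuous on the compact set $\mathbb{U}$.
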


\subsection{Bayesian Shape Inversion}
\label{sec:bayesian_inverse}
We proceed to discuss the significance of the parametric
holomorphy results in the context of computational
Bayesian shape inversion.
\subsubsection{Bayesian Inverse Problems}
Provided a \emph{parameter--to--solution map} $G: \mathbb{U} \rightarrow X $
with $X$ being a separable Banach space over $\IC$ and a
\emph{prior} probability measure $\mu_0$ on $\mathbb{U}$, one seeks
to update the prior measure from observational data. In the
following we will assume that $\mu_0$ is the uniform probability
measure in the sense of \cite[Section 2]{SS12}. To obtain observations
from solutions in $X$, an \emph{observation operator}
$O: X \rightarrow \mathbb{R}^K$ is assumed to be given, allowing
to introduce the \emph{uncertainty--to--observation} map as
\begin{equation}
	\mathcal{J}
	\coloneqq 
	O \circ G: 
	\mathbb{U} \rightarrow \mathbb{R}^K.
\end{equation} 
Further, the observations $\Upsilon \in \IR^K$ to update the prior measure are
assumed to be distorted by additive Gaussian noise,
\begin{align}\label{eq:model_noise}
	\Upsilon = \mathcal{J}({\y^\star})+\eta,
\end{align} 
where $\eta \sim \mathcal{N}(0,\Sigma)$, $\Sigma \in \mathbb{R}^{K \times K}_\text{sym}$ is a symmetric, positive definite covariance matrix and $\y^\star$ is the \emph{ground truth}.
The updated \emph{posterior probability measure} $\mu^{\Upsilon}$ can then be
expressed with respect to the prior measure as follows.

\begin{theorem}[{\cite[Theorem 2.1]{SS12}}]\label{thm:bayes_thm}
Assume that $\mathcal{J}: \mathbb{U} \rightarrow \IR^K$ is bounded and continuous.
Then $\mu^\Upsilon(\dd \y)$, the distribution of $\y \in \mathbb{U}$
given the data $\Upsilon \in \mathbb{R}^K$, is absolutely continuous with respect to
$\mu_0(\dd \y)$, i.e. there exists a parametric density $\Theta(\y)$
such that for each $\y \in \mathbb{U}$ the \emph{Radon-Nikodym} derivative
is given by
\begin{align}
	\frac{\dd \mu^\Upsilon}{\dd \mu_0}(\y) 
	= 
	\frac{1}{Z} 
	\Theta(\y)
\end{align}
with the posterior density
\begin{align}\label{eq:post_density}
	\Theta(\y) 
	\coloneqq \exp \left(-\Phi_{\Sigma}(\Upsilon, \boldsymbol{y}) \right)
	\quad
	\text{and}
	\quad
	Z
	\coloneqq 
	\int\limits_{\mathbb{U}}\Theta(\y) \mu_0(\dd \boldsymbol{y})>0,
\end{align}
with
\begin{align}
	\Phi_{\Sigma}(\Upsilon, \boldsymbol{y}) 
	= 
	\frac{1}{2}
	\left(
		\Upsilon - \mathcal{J}({\boldsymbol{y}}) 
	\right)^\top 
	\Sigma^{-1}  
	\left(
		\Upsilon - \mathcal{J}({\boldsymbol{y}})
	\right).
\end{align}
\end{theorem}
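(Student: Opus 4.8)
The plan is to obtain the stated Radon--Nikodym identity as an application of the abstract Bayesian conditioning result \cite[Theorem 2.1]{SS12}, so that the only remaining work is to verify its hypotheses in the present setting. As a preliminary I would record that $\mathbb{U}=[-1,1]^{\IN}$, being a countable product of compact metric spaces, is itself compact and metrizable, hence Polish; together with the compactness already noted in \cref{sec:preliminaries}, this makes regular conditional probabilities on $\mathbb{U}$ well defined and thereby gives a meaning to the conditioned measure $\mu^{\Upsilon}$.

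Next I would set up the joint model dictated by the noise model \cref{eq:model_noise}. Conditionally on $\y$, the datum $\Upsilon$ is $\mathcal{N}(\mathcal{J}(\y),\Sigma)$-distributed, with Lebesgue density on $\IR^K$ equal to $(2\pi)^{-K/2}(\det\Sigma)^{-1/2}\exp(-\Phi_{\Sigma}(\Upsilon,\y))$, with $\Phi_{\Sigma}$ as in \cref{eq:post_density}; the prefactor is independent of $\y$ and $\Upsilon$. Hence the joint law of $(\y,\Upsilon)$ on $\mathbb{U}\times\IR^K$ is $\mu_0(\dd\y)\otimes(2\pi)^{-K/2}(\det\Sigma)^{-1/2}\exp(-\Phi_{\Sigma}(\Upsilon,\y))\,\dd\Upsilon$. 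Since $\mathcal{J}$ is continuous, the map $(\y,\Upsilon)\mapsto\Phi_{\Sigma}(\Upsilon,\y)$ is continuous, hence jointly Borel measurable, which is the measurability hypothesis required to disintegrate this joint law with respect to the marginal law of $\Upsilon$. The two quantitative ingredients are then the bounds on $Z=Z(\Upsilon)=\int_{\mathbb{U}}\exp(-\Phi_{\Sigma}(\Upsilon,\y))\,\mu_0(\dd\y)$: since $\Sigma$ is symmetric positive definite, $\Phi_{\Sigma}(\Upsilon,\y)\ge0$ for all $\y$, so $\exp(-\Phi_{\Sigma}(\Upsilon,\y))\le1$ and $Z\le\mu_0(\mathbb{U})=1<\infty$; and boundedness of $\mathcal{J}$ yields a finite $M=M(\Upsilon)$ with $\Phi_{\Sigma}(\Upsilon,\y)\le M$ uniformly in $\y\in\mathbb{U}$, so $\exp(-\Phi_{\Sigma}(\Upsilon,\y))\ge e^{-M}$ and $Z\ge e^{-M}>0$.

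With $\Phi_{\Sigma}$ jointly measurable and $0<Z<\infty$, \cite[Theorem 2.1]{SS12} applies: disintegration of the joint law produces precisely the regular conditional distribution $\mu^{\Upsilon}(\dd\y)=Z^{-1}\exp(-\Phi_{\Sigma}(\Upsilon,\y))\,\mu_0(\dd\y)=Z^{-1}\Theta(\y)\,\mu_0(\dd\y)$, which is the assertion; continuity of $\mathcal{J}$ moreover makes $\y\mapsto\Theta(\y)$ continuous and bounded on $\mathbb{U}$, a property used in the well-posedness and approximation arguments that follow. I expect the one genuine obstacle to be the rigorous justification of this conditioning step, i.e.\ that the formal ``density over density'' manipulation really yields the regular conditional probability of $\y$ given $\Upsilon$ rather than a merely formal expression; this is exactly the content of \cite[Theorem 2.1]{SS12} (and of the general Bayes' theorem for inverse problems cited therein), and it applies here precisely because $\mathbb{U}$ is Polish, $\y\mapsto\Phi_{\Sigma}(\Upsilon,\y)$ is $\mu_0$-measurable, and $0<Z<\infty$. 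Accordingly I would present the proof as an invocation of that theorem with these three points verified as above.
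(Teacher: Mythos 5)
Your proposal is correct and matches the paper's treatment: the paper states \cref{thm:bayes_thm} as an imported result, citing \cite[Theorem 2.1]{SS12} without reproducing a proof, and your argument is precisely the verification of that theorem's hypotheses (joint measurability of $\Phi_\Sigma$ via continuity of $\mathcal{J}$, and $0<Z\leq 1$ from nonnegativity of $\Phi_\Sigma$ together with boundedness of $\mathcal{J}$ on the compact set $\mathbb{U}$) followed by its invocation. Nothing in your reasoning conflicts with the paper, and the hypothesis checks you spell out are exactly those the paper leaves implicit when it later applies the theorem with the bounded, continuous uncertainty-to-observation map built from \cref{lmm:holomorphic_farfield}.
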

Given a quantity of interest $\phi\colon\mathbb{U}\to Y$,
being $Y$ a separable Banach space over $\IC$, \cref{thm:bayes_thm}
allows us to compute statistical moments of a quantity of interest with respect to
the posterior measure. For example, one can compute 
the mean with respect to $\mu^\Upsilon(\dd \y)$ as follows
\begin{align}\label{eq:expected_qoi}
	\mathbb{E}^{\mu^\Upsilon}[\phi]
	=
    \int_{\mathbb{U}}\phi(\y)\mu^\Upsilon(\dd\y)
    =
	\frac{1}{Z}
    \int\limits_{\mathbb{U}}\phi(\y) \Theta(\y) \mu_0(\dd\boldsymbol{y}),
\end{align}
with $Z$ as in \cref{eq:post_density}.

\subsubsection{Bayesian Shape Inversion in Acoustic Scattering}
\label{eq:bayesian_shape_inv}
Equipped with the frequency-robust boundary integral formulations
for the sound-soft acoustic scattering problem introduced in
\cref{sec:application}, the uncertainty--to--solution and observation
operators of the Bayesian shape inversion problem are
\begin{equation}\label{eq:unc-to-sol}
	G\colon\mathbb{U}\to L^2(\hat{\Gamma}):
	\y
	\mapsto
	\begin{cases}
	\hat{\phi}_{\y}=\left(
		\hat{\OA}^{(\kappa,\eta)'}_{\y} 
	\right)^{-1}
	\hat{f}_\y & \text{for \cref{prb:problem_Aprime_k_n}},\\
	\hat{\varphi}_{\y}=\left(
		\hat{\OA}^{(\kappa,\eta)}_{\y} 
	\right)^{-1}
	\hat{g}_\y & \text{for \cref{prbm:BIE_sound_soft_A}},
\end{cases}
\end{equation}
with $\hat{\OA}^{(\kappa,\eta)}_{\y}$ and $\hat{\OA}^{(\kappa,\eta)'}_{\y}$ as in \cref{eq:combinedholomorphy} and $\hat{f}_\y$ and $\hat{g}_\y$ as in \cref{eq:pullbackrhs}
and
\begin{equation}\label{eq:observation}
O\colon\mathbb{U}\to\IR^2,
\quad
\y
\mapsto
\begin{pmatrix}
\Re(u_\y^\infty)\\
\Im(u_\y^\infty)
\end{pmatrix}
\end{equation}
with
\begin{equation}\label{eq:far_field_u}
u_\y^\infty
=
\begin{cases}
\hat{\text{FF}}^{(\kappa)}_{\mathcal{S},\Gamma_\y}
	\left(\hat{\phi}_{\y}\right) & \text{for \cref{prb:problem_Aprime_k_n}},\\
\hat{\text{FF}}^{(\kappa)}_{\mathcal{S},\Gamma_\y}
	\left(\hat{\varphi}_{\y}\right)
 -\iota\eta\hat{\text{FF}}^{(\kappa)}_{\mathcal{D},\Gamma_\y}
	\left(\hat{\varphi}_{\y}\right)
 & \text{for \cref{prbm:BIE_sound_soft_A}},
\end{cases}
\end{equation}
and $\hat{\text{FF}}^{(\kappa)}_{\mathcal{S},\Gamma_\y}$ and $\hat{\text{FF}}^{(\kappa)}_{\mathcal{D},\Gamma_\y}$ as in \cref{eq:farfieldpullbacks}. Multiple observation directions
or directions of incident waves could also be considered with the straightforward adaptions.
The quantity of interest for our Bayesian shape inversion problem is
\begin{equation}\label{eq:qoi}
	\phi\colon
	\mathbb{U}\to\mathscr{C}^{0,1}(\hat\Gamma; \IR^3)
	\colon
	\y\mapsto\boldsymbol{r}_\y,
\end{equation}
with $\boldsymbol{r}_\y$ is as in \cref{eq:affine_parametric_representation}.
Other quantities of interest could also be considered.

\subsubsection{Parametric Regularity of the Posterior Distribution}
In this section, we present the parametric regularity results for the posterior density as defined in \cref{eq:post_density}. Prior research conducted in \cite{GP18} and \cite{SS14} has also investigated this topic and arrived at similar conclusions. Specifically, in \cite[Theorem 4 and Corollary 5]{GP18}, the authors established parametric regularity estimates for the posterior distribution using a real-valued differentiation approach.

In contrast, our approach centers on the sparsity of the posterior distribution. We observe that it can be expressed as the composition of an entire function and a $(\boldsymbol{b},p,\varepsilon)$-holomorphic map, which in turn implies that the posterior density is also $(\boldsymbol{b},p,\varepsilon)$-holomorphic. This finding provides a useful way to derive the sparsity of the posterior density.



\begin{corollary}\label{thm:paramertric_reg_theta}
Let \cref{assump:parametric_boundary} and 
\cref{assump:regularity_patches} be fulfilled
with $\boldsymbol{b} \in \ell^p(\IN)$ and $p\in (0,1)$.
In addition, let $G:\mathbb{U} \rightarrow L^2(\hat{\Gamma})$,
$O:\mathbb{U}\rightarrow \IR^2$, and 
$\phi:\mathbb{U} \rightarrow \mathscr{C}^{0,1}(\hat{\Gamma},\mathbb{R}^3)$
be as in \cref{eq:unc-to-sol}, \cref{eq:observation}, 
and \cref{eq:qoi}, respectively.
Then the maps
\begin{align}
	\mathbb{U}
	\ni
	\y
	\mapsto
	\Theta(\y)
	\in
	\IR
	\quad
	\text{and}
	\quad
	\mathbb{U}
	\ni
	\y
	\mapsto
    \phi(\y)
	\Theta(\y)
	\in
	\mathscr{C}^{0,1}(\hat{\Gamma},\mathbb{R}^3)
\end{align}
with $\Theta: \mathbb{U} \rightarrow \IC$ as in 
\cref{eq:post_density} are
$(\boldsymbol{b},p,\varepsilon)$-holomorphic and continuous.
\end{corollary}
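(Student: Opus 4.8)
The plan is to write $\Theta$ as the composition of the entire function $w\mapsto\exp(-w)$ with a scalar $(\boldsymbol{b},p,\varepsilon)$-holomorphic map built from the uncertainty-to-observation map $\mathcal{J}=O\circ G$, and to treat $\phi\,\Theta$ as a bounded bilinear image of two $(\boldsymbol{b},p,\varepsilon)$-holomorphic maps. In this way the whole argument reduces to three ingredients already available in the paper: holomorphy of the parameter-to-solution map (from \cref{cor:combinedholomorphy} together with \cref{prop:hol_maps_banach_spaces}, exactly as in \cref{cor:par-to-sol-holomorphy}), holomorphy of the far-field evaluations from \cref{lmm:holomorphic_farfield}, and holomorphy of $\y\mapsto\boldsymbol{r}_\y$ from \cref{lmm:holomorphic_parametric_rep}, plus the standard closure properties of $(\boldsymbol{b},p,\varepsilon)$-holomorphy under finite linear combinations, bounded multilinear maps, and composition with entire functions.

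First I would record that the forward map $G$ of \cref{eq:unc-to-sol} coincides with the parameter-to-solution maps whose $(\boldsymbol{b},p,\varepsilon_0)$-holomorphy and continuity were established above, and then feed the corresponding solution density into \cref{lmm:holomorphic_farfield} to obtain that the far-field evaluations appearing in \cref{eq:far_field_u} are $(\boldsymbol{b},p,\varepsilon_1)$-holomorphic for some $0<\varepsilon_1\le\varepsilon_0$; since $u_\y^\infty$ in \cref{eq:far_field_u} is a fixed finite linear combination of these, $\mathbb{U}\ni\y\mapsto u_\y^\infty\in\IC$ is $(\boldsymbol{b},p,\varepsilon_1)$-holomorphic and continuous. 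To pass to the two real observations $\Re(u_\y^\infty),\Im(u_\y^\infty)$ of \cref{eq:observation} I would use that the domains $\mathcal{O}_{\rho_j}$ and the Bernstein ellipses $\mathcal{E}_{\rho_j}$ are invariant under complex conjugation: the holomorphic extension $\z\mapsto u_\z^\infty$ then admits a conjugate companion $\z\mapsto\overline{u_{\bar\z}^\infty}$ which is holomorphic in each variable on $\mathcal{O}_{\boldsymbol\rho}$, restricts to $\overline{u_\y^\infty}$ on $\mathbb{U}$, and satisfies $\sup_{\mathcal{E}_{\boldsymbol\rho}}|\overline{u_{\bar\z}^\infty}|=\sup_{\mathcal{E}_{\boldsymbol\rho}}|u_\z^\infty|$ and $\sup_{\mathbb{U}}|\overline{u_\y^\infty}|=\sup_{\mathbb{U}}|u_\y^\infty|$; hence $\y\mapsto\overline{u_\y^\infty}$ is $(\boldsymbol{b},p,\varepsilon_1)$-holomorphic, and therefore so are $\Re(u_\y^\infty)=\tfrac12(u_\y^\infty+\overline{u_\y^\infty})$ and $\Im(u_\y^\infty)=\tfrac1{2\imath}(u_\y^\infty-\overline{u_\y^\infty})$, so that $\mathcal{J}=O\circ G\colon\mathbb{U}\to\IR^2$ is $(\boldsymbol{b},p,\varepsilon_1)$-holomorphic and continuous.

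With $\mathcal{J}$ in hand I would expand $\Phi_\Sigma(\Upsilon,\y)=\tfrac12\,\Upsilon^\top\Sigma^{-1}\Upsilon-\Upsilon^\top\Sigma^{-1}\mathcal{J}(\y)+\tfrac12\,\mathcal{J}(\y)^\top\Sigma^{-1}\mathcal{J}(\y)$, which exhibits $\y\mapsto\Phi_\Sigma(\Upsilon,\y)$ as a constant plus a bounded linear functional and a bounded quadratic form evaluated at the $(\boldsymbol{b},p,\varepsilon_1)$-holomorphic map $\mathcal{J}$; since bounded multilinear maps are holomorphic (cf.\ \cref{prop:hol_maps_banach_spaces}\,\ref{prop:hol_maps_banach_spaces3}) and finite sums and products of $(\boldsymbol{b},p,\cdot)$-holomorphic maps are again $(\boldsymbol{b},p,\cdot)$-holomorphic after shrinking the admissibility threshold, this yields $(\boldsymbol{b},p,\varepsilon_2)$-holomorphy of $\Phi_\Sigma(\Upsilon,\cdot)$ for some $0<\varepsilon_2\le\varepsilon_1$. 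Composing with the entire, locally bounded map $w\mapsto\exp(-w)$ preserves holomorphy in each variable on $\mathcal{O}_{\boldsymbol\rho}$ and the uniform bounds on $\mathbb{U}$ and on $\mathcal{E}_{\boldsymbol\rho}$ required by \cref{def:bpe_holomorphy}, so $\Theta=\exp(-\Phi_\Sigma(\Upsilon,\cdot))$ of \cref{eq:post_density} is $(\boldsymbol{b},p,\varepsilon_2)$-holomorphic and continuous. Finally, $\phi\colon\y\mapsto\boldsymbol{r}_\y$ of \cref{eq:qoi} is $(\boldsymbol{b},p,\varepsilon_3)$-holomorphic and continuous into $\mathscr{C}^{0,1}(\hat\Gamma;\IR^3)$ by \cref{lmm:holomorphic_parametric_rep}, and scalar multiplication $(c,\boldsymbol{v})\mapsto c\,\boldsymbol{v}$ is a bounded bilinear map on the (complexified) space $\mathscr{C}^{0,1}(\hat\Gamma;\IC^3)$, so the same product argument gives that $\y\mapsto\phi(\y)\Theta(\y)$ is $(\boldsymbol{b},p,\varepsilon)$-holomorphic and continuous with $\varepsilon\coloneqq\min(\varepsilon_2,\varepsilon_3)$, which is the claim.

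The only genuinely non-routine point is the passage to real and imaginary parts of $u_\y^\infty$ in the second step: because $\Re$ and $\Im$ are not complex-holomorphic operations one cannot differentiate through $O$ directly, and one must instead exploit the conjugation symmetry of $\mathcal{O}_{\boldsymbol\rho}$ and of the tensorized Bernstein poly-ellipses to construct the holomorphic extension of $\y\mapsto\overline{u_\y^\infty}$ with the correct bounds. Everything else is an assembly of elementary stability properties of $(\boldsymbol{b},p,\varepsilon)$-holomorphy — several of which are already used implicitly elsewhere in the paper and which I would isolate here as a short auxiliary lemma.
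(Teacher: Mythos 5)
Your proposal is correct and follows essentially the same route as the paper: you obtain holomorphy of $\mathcal{J}=O\circ G$ from \cref{lmm:holomorphic_farfield} together with the conjugate-companion construction $\z\mapsto\overline{u^{\infty}_{\overline{\z}}}$ for the real and imaginary parts, which is precisely the content of the paper's \cref{lem:realandimaginarybpe}. The only difference is that you write out explicitly the remaining routine steps (the quadratic form in $\mathcal{J}$, composition with the entire map $w\mapsto\exp(-w)$, and the bilinear product with $\phi$), which the paper delegates to the argument of \cite[Theorem 4.1]{SS14}; your treatment of these is sound.
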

\begin{proof}
We show that $O$ is indeed $(\boldsymbol{b},p,\varepsilon)$-holomorphic, which yields $(\boldsymbol{b},p,\varepsilon)$-holomorphy of $\mathcal{J}=O\circ G$. The remainder of the proof is in complete analogy to \cite[Theorem 4.1]{SS14}. Using \cref{lmm:holomorphic_farfield}, the $(\boldsymbol{b},p,\varepsilon)$-holomorphy of $O$ is immediate, once we show that taking the real and the imaginary part of a complex function is holomorphic. Indeed, we provide a proof of this statement in \cref{lem:realandimaginarybpe}.
\end{proof}

This result justifies the sparse polynomial interpolation results
and HoQMC methods also for the posterior distribution of the acoustic scattering problem.
Moreover, along the lines of the following section, it also allows
to construct surrogates of the posterior distribution using artificial
neural networks.

\begin{remark}
Although the theoretical results justify the application of these results, there may be computational issues when the posterior concentrates too much. This for example the case when the noise level, i.e., the eigenvalues of $\Sigma$, are small, see \cite{SS16} for a discussion.
\end{remark}

\subsection{Approximation of the Far-Field using Artificial Neural Networks}
Let $L \in \IN$, $\ell_0,\dots, \ell_L \in \IN$
and let
$\sigma:\IR \rightarrow\IR$ be a
non-linear function, referred to in the following as
the \emph{activation function}.
Given $({\bf W}_k,{\bf b}_k)_{k=1}^{L}$,
${\bf W}_k\in\IR^{\ell_k\times\ell_{k-1}}$, ${\bf b}_k\in\IR^{\ell _k}$,
define the affine transformation
 ${\bf A}_k: \IR^{\ell_{k-1}}\rightarrow \IR^{\ell_k}: {\bf x} \mapsto {\bf W}_k {\bf x}+{\bf b}_k$
for $k\in\{1,\ldots,L\}$. We define an \emph{artificial neural network (ANN)}
with activation function $\sigma$ as a map
$\Psi_{\mathcal{NN}}: \IR^{\ell_0}\rightarrow \IR^{\ell_L}$ with
\begin{align}\label{eq:ann_def}
	\Psi_{\mathcal{NN}}({\bf x})
	\coloneqq
	\begin{cases}
	{\bf A}_1(x), & L=1, \\
	\left(
		{\bf A}_L
		\circ
		\sigma
		\circ
		{\bf A}_{L-1}
		\circ
		\sigma
		\cdots
		\circ
		\sigma
		\circ
		{\bf A}_1
	\right)(x),
	& L\geq2,
	\end{cases}
\end{align}
where the activation function $\sigma:\IR\rightarrow \IR$
is applied componentwise. We define the depth and the width
of an ANN as
\begin{equation}
\normalfont\text{width}(\Psi_{\mathcal{NN}})=\max\{\ell_1,\ldots, \ell_L\}
\quad
\text{and}
\quad
\normalfont\text{depth}(\Psi_{\mathcal{NN}})=L.
\end{equation}
In the present work, we consider as activation function
the hyperbolic tangent
\begin{align}
	\sigma(x)
	=
	\text{tanh}(x)
	=
	\frac{\exp(x)-\exp(-x)}{\exp(x)+\exp(-x)}.
\end{align}	
When this particular function is used, we refer to 
\cref{eq:ann_def} as a tanh ANN.
However we remark that other choices exist, such as 
the Rectified Linear Unit (ReLU), Rectified Power Unit (RePU),
and the \emph{swish} activation function.

We are interested in the tanh ANN approximation of the far-field
at a specific observation direction $\boldsymbol{d}_{\text{obs}}
\in \mathbb{S}^2$ using the mathematical tools of \cite{ABD22}.
To this end, we define
\begin{equation}\label{eq:map_ff}
	\Phi:
	\mathbb{U}
	\rightarrow
	\IR:
	\y
	\mapsto
	\snorm{u_\y^\infty}^2,
\end{equation}
where, for each $\y\in \mathbb{U}$, $u_\y^\infty$
is given as in \cref{eq:far_field_u}. In the following,
we present a streamlined and tailored-to-our-purposes
version of the results presented
in \cite{ABD22} to this particular application, which allows
for a straightforward generalization to more involved examples.
To this end,
we first recall the approximation of Legendre 
polynomials by means of tanh ANNs.

\begin{lemma}[{\cite[Theorem 7.4]{ABD22}}]\label{lmm:approx_pol}
Let $\Lambda \subset \mathcal{F}$ with $\mathcal{F}$ as in
\cref{eq:infinitepolynomial} be a finite multi-index set,
$m(\Lambda)=\max_{\boldsymbol{\nu}\in\Lambda}\|\boldsymbol{\nu}\|_1$,
and $\vartheta \subset \IN$, $\snorm{\vartheta}=n$ satisfying
\begin{equation}
	\bigcup_{\boldsymbol{\nu} \in \Lambda}
	\normalfont\text{supp}(\boldsymbol{\nu})
	\subseteq
	\vartheta.
\end{equation}
Then, for any $\delta \in (0,1)$ there exists a tanh
ANN $\Psi_{\Lambda,\delta}: \IR^n \rightarrow \IR^{\snorm{\Lambda}}$
with
$\Psi_{\Lambda,\delta} = \left(\Psi_{\boldsymbol{\nu},\delta}\right)_{\boldsymbol{\nu} \in \Lambda}$
 such that
\begin{equation}
	\norm{
		P_{\boldsymbol{\nu}}^{(2)}
		-
		\Psi_{\boldsymbol{\nu},\delta}
		\circ
		\mathcal{T}_\vartheta
	}_{L^{\infty}(\mathbb{U})}
	\leq
	\delta,
	\quad
	\boldsymbol{\nu}
	\in
	\Lambda,
\end{equation}	
with 
$
	\normalfont\text{width}
	\left(
		\Psi_{\boldsymbol{\nu},\delta}
	\right)
	=
	\mathcal{O}
	\left(
		\snorm{\Lambda}
		m(\Lambda)
	\right)
$
and
$
	\normalfont\text{depth}
	\left(
		\Psi_{\boldsymbol{\nu},\delta}
	\right)
	=
	\mathcal{O}
	\left(
		\log_2(m(\Lambda))
	\right)
$,
where
$
	\mathcal{T}_\vartheta:
	\IR^\IN
	\rightarrow
	\IR^{\snorm{\vartheta}}
	:
	\y
	=
	(y_j)_{j\in \IN}
	\mapsto
	(y_j)_{j\in \vartheta}.
$
\end{lemma}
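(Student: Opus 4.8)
The plan is to emulate, by a tanh ANN, the arithmetic circuit that evaluates the tensorized Legendre polynomial $P_{\boldsymbol{\nu}}^{(2)}(\y)=\prod_{j\in\operatorname{supp}(\boldsymbol{\nu})}P_{\nu_j}^{(2)}(y_j)$, and then to stack these circuits over $\boldsymbol{\nu}\in\Lambda$. The single building block is the fact, underlying \cite[Theorem 7.4]{ABD22}, that tanh ANNs emulate multiplication: for every $\epsilon>0$ and $M>0$ there is a tanh ANN $\widehat{\times}_{\epsilon,M}\colon\IR^2\to\IR$ of width and depth independent of $\epsilon,M$ with $\snorm{\widehat{\times}_{\epsilon,M}(a,b)-ab}\leq\epsilon$ for $\snorm{a},\snorm{b}\leq M$; this follows from approximating $x\mapsto x^2$ by a short linear combination of shifted and scaled hyperbolic tangents (via the Taylor expansion of $\tanh$, or finite differences) together with the polarization identity $ab=\tfrac14\big((a+b)^2-(a-b)^2\big)$. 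In the same way a tanh ANN approximates any affine map on a bounded set to arbitrary accuracy, so affine combinations and the padding of subnetworks to a common depth (by emulating the identity) are available without affecting the asymptotic width or depth.

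First I would treat a single univariate factor: writing $P_{\nu_j}^{(2)}$ in a form amenable to parallel evaluation (e.g.\ divide-and-conquer in the monomial basis, or the matrix form of the three-term recurrence), one obtains a tanh subnetwork that approximates $t\mapsto P_{\nu_j}^{(2)}(t)$ on $[-1,1]$ with depth $\mathcal{O}(\log_2\nu_j)=\mathcal{O}(\log_2 m(\Lambda))$ and width $\mathcal{O}(\nu_j)$, each internal node an instance of $\widehat{\times}_{\epsilon,M}$ followed by affine layers. Next, for a fixed $\boldsymbol{\nu}\in\Lambda$ I would run these $\snorm{\operatorname{supp}(\boldsymbol{\nu})}$ univariate subnetworks in parallel on the coordinates indexed by $\operatorname{supp}(\boldsymbol{\nu})\subseteq\vartheta$ (total width $\sum_j\mathcal{O}(\nu_j)=\mathcal{O}(\norm{\boldsymbol{\nu}}_1)=\mathcal{O}(m(\Lambda))$, depth $\mathcal{O}(\log_2 m(\Lambda))$), and then multiply their outputs together by a balanced binary tree of depth $\lceil\log_2\snorm{\operatorname{supp}(\boldsymbol{\nu})}\rceil=\mathcal{O}(\log_2 m(\Lambda))$ and width $\mathcal{O}(m(\Lambda))$, yielding a tanh ANN $\Psi_{\boldsymbol{\nu},\delta}$ of width $\mathcal{O}(m(\Lambda))$ and depth $\mathcal{O}(\log_2 m(\Lambda))$. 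Finally, since $\operatorname{supp}(\boldsymbol{\nu})\subseteq\vartheta$ for every $\boldsymbol{\nu}\in\Lambda$, all these networks share the input $\mathcal{T}_\vartheta(\y)\in\IR^n$, so stacking them (after padding to a common depth) gives $\Psi_{\Lambda,\delta}=(\Psi_{\boldsymbol{\nu},\delta})_{\boldsymbol{\nu}\in\Lambda}$ with $\text{width}(\Psi_{\boldsymbol{\nu},\delta})=\mathcal{O}(\snorm{\Lambda}m(\Lambda))$ and $\text{depth}(\Psi_{\boldsymbol{\nu},\delta})=\mathcal{O}(\log_2 m(\Lambda))$.

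The error analysis is where the work lies, and the hard part is to carry it out while keeping the network \emph{size} independent of $\delta$. On $[-1,1]$ one has $\norm{P_{\nu_j}^{(2)}}_{L^\infty([-1,1])}=\mathcal{O}(\sqrt{\nu_j})$, hence all intermediate quantities in the (finitely many, and only $m(\Lambda)$-dependently many) arithmetic operations remain in a fixed interval $[-M,M]$ with $M=M(m(\Lambda))$; each gadget $\widehat{\times}_{\epsilon,M}$ is therefore used with this $M$. A telescoping/Lipschitz estimate — $\snorm{\prod_k\widehat a_k-\prod_k a_k}\leq\sum_k(\text{error in }\widehat a_k)\prod_{\ell\neq k}\snorm{a_\ell}$ for the products, together with the per-node errors of the trees and the affine layers — then bounds the output error by $C(m(\Lambda))\,\epsilon$ for a constant depending only on $m(\Lambda)$. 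Choosing $\epsilon\leq\delta/C(m(\Lambda))$ forces $\norm{P_{\boldsymbol{\nu}}^{(2)}-\Psi_{\boldsymbol{\nu},\delta}\circ\mathcal{T}_\vartheta}_{L^\infty(\mathbb{U})}\leq\delta$; this enlarges only the magnitudes of the ANN weights and leaves $\text{width}$ and $\text{depth}$ at the claimed orders. The main obstacle is precisely this quantitative bookkeeping — ensuring every intermediate value stays in a bounded range so the fixed-size multiplication gadget applies, and verifying that the error-amplification factor $C(m(\Lambda))$ is $\delta$-independent and does not involve the ambient parametric dimension — which is the content of the tanh-ANN polynomial-emulation machinery of \cite{ABD22} and the references therein.
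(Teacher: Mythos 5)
This lemma is not proved in the paper at all: it is imported verbatim from \cite[Theorem 7.4]{ABD22}, so there is no in-paper argument to compare against. Your sketch is a faithful outline of the construction underlying that cited result (tanh multiplication gadgets via finite differences and polarization, parallel univariate Legendre evaluation, logarithmic-depth product trees, stacking over $\Lambda$, and a $\delta$-independent architecture with the accuracy absorbed into the weight magnitudes), so it takes essentially the same route as the source; the quantitative bookkeeping you defer is precisely the content of \cite{ABD22}.
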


In combination with the sparse polynomial approximation results
from \cref{sec:sparsepolynomial} this result allows to derive
expression rates of tanh ANNs.

\begin{lemma}
Let \cref{assump:parametric_boundary} and 
\cref{assump:regularity_patches} be fulfilled
with $\boldsymbol{b} \in \ell^p(\IN)$ and $p\in (0,1)$.
Let $\Phi: \mathbb{U}\rightarrow \IR$ be as in 
\cref{eq:map_ff}.
Assume that $\boldsymbol{b}$ has a minimal 
monotone majorant in $\ell^p(\IN)$.
There exists a sequence of tanh neural network
$(\Psi^{(n)})_{n\in \IN}$
and $C>0$ such that
\begin{equation}
	\norm{
		\Phi
		-
		\Psi^{(n)}
		\circ
		\mathcal{T}_\vartheta
	}_{L^2(\mathbb{U})}
	\leq
	C
	n^{-\left(\frac{1}{p} - \frac{1}{2}\right)}
\end{equation}
with 
$
	\normalfont\text{width}
	\left(
		\Psi^{(n)}
	\right)
	=
	\mathcal{O}(n^2)
$,
$
	\normalfont\text{depth}
	\left(
		\Psi^{(n)}
	\right)
	=
	\mathcal{O}
	\left(
		\log_2(n)
	\right)
$, 
and $\vartheta = \{1,\dots,n\}$.
\end{lemma}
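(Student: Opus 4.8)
\textbf{Step 1: $(\boldsymbol{b},p,\varepsilon)$-holomorphy of $\Phi$.} The plan is to reduce the statement to \cref{thm:sparsepolynomial} combined with \cref{lmm:approx_pol}, so I would first verify that $\Phi$ from \cref{eq:map_ff} is $(\boldsymbol{b},p,\varepsilon)$-holomorphic and continuous. By \cref{eq:far_field_u}, $u^\infty_\y$ is a fixed linear combination of the far-field patterns $\hat{\mathrm{FF}}^{(\kappa)}_{\mathcal{S},\Gamma_\y}$ and $\hat{\mathrm{FF}}^{(\kappa)}_{\mathcal{D},\Gamma_\y}$ evaluated at the parameter-to-solution densities of \cref{prb:problem_Aprime_k_n} and \cref{prbm:BIE_sound_soft_A}, which are $(\boldsymbol{b},p,\varepsilon')$-holomorphic and continuous; hence \cref{lmm:holomorphic_farfield} yields that $\y\mapsto u^\infty_\y\in\IC$ is $(\boldsymbol{b},p,\varepsilon)$-holomorphic and continuous, and in particular uniformly bounded on $\mathbb{U}$. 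Since $\snorm{u^\infty_\y}^2=(\Re u^\infty_\y)^2+(\Im u^\infty_\y)^2$ is \emph{not} holomorphic in $u^\infty_\y$, I would at this point invoke \cref{lem:realandimaginarybpe} to obtain that $\y\mapsto\Re u^\infty_\y$ and $\y\mapsto\Im u^\infty_\y$ are $(\boldsymbol{b},p,\varepsilon)$-holomorphic $\IR$-valued maps; composing with the entire function $t\mapsto t^2$ and taking a finite sum, both operations preserving $(\boldsymbol{b},p,\varepsilon)$-holomorphy, then shows that $\Phi\colon\mathbb{U}\to\IR$ is $(\boldsymbol{b},p,\varepsilon)$-holomorphic and continuous, with uniform boundedness being \cref{def:bpe_holomorphy1}.

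\textbf{Step 2: best $n$-term polynomial surrogate.} The proof of \cref{thm:sparsepolynomial} (that is, of \cite[Theorem~2.2]{CCS15}) uses only the $(\boldsymbol{b},p,\varepsilon)$-holomorphy of the underlying map, so it applies verbatim to $\Phi$ with target space $\IR$. Hence the $L^2$-normalized Legendre coefficients $(c^{(2)}_{\boldsymbol{\nu}})_{\boldsymbol{\nu}\in\mathcal{F}}$ of $\Phi$ lie in $\ell^p_m(\mathcal{F})\subset\ell^p(\mathcal{F})\subset\ell^1(\mathcal{F})$, so $S\coloneqq\sum_{\boldsymbol{\nu}\in\mathcal{F}}\snorm{c^{(2)}_{\boldsymbol{\nu}}}<\infty$, and for each $n\in\IN$ there is a downward closed $\Lambda_n\subset\mathcal{F}$ with $\snorm{\Lambda_n}=n$ and
\begin{align}
	\norm{\Phi-\sum_{\boldsymbol{\nu}\in\Lambda_n}c^{(2)}_{\boldsymbol{\nu}}P^{(2)}_{\boldsymbol{\nu}}}_{L^2(\mathbb{U})}\leq C\,(n+1)^{-(1/p-1/2)}. \label{eq:plan_ccs_bound}
\end{align}
Using the hypothesis that $\boldsymbol{b}$ has a minimal monotone majorant in $\ell^p(\IN)$ (which fixes a decreasing reordering of the coordinates) together with the downward closedness of $\Lambda_n$, one may arrange $\bigcup_{\boldsymbol{\nu}\in\Lambda_n}\operatorname{supp}(\boldsymbol{\nu})\subseteq\{1,\dots,n\}$; moreover downward closedness forces $m(\Lambda_n)=\max_{\boldsymbol{\nu}\in\Lambda_n}\norm{\boldsymbol{\nu}}_1\leq n-1$, since $\norm{\boldsymbol{\nu}}_1=m$ with $\nu_j>0$ implies $\boldsymbol{0},\boldsymbol{e}_j,2\boldsymbol{e}_j,\dots,m\boldsymbol{e}_j\in\Lambda_n$.

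\textbf{Step 3: tanh ANN emulation and error bound.} With $\vartheta=\{1,\dots,n\}$ the inclusion hypothesis of \cref{lmm:approx_pol} holds for $\Lambda_n$, and applying \cref{lmm:approx_pol} with $\delta=\delta_n\coloneqq S^{-1}(n+1)^{-(1/p-1/2)}$ produces a tanh ANN $\Psi_{\Lambda_n,\delta_n}=(\Psi_{\boldsymbol{\nu},\delta_n})_{\boldsymbol{\nu}\in\Lambda_n}\colon\IR^n\to\IR^{\snorm{\Lambda_n}}$ with $\norm{P^{(2)}_{\boldsymbol{\nu}}-\Psi_{\boldsymbol{\nu},\delta_n}\circ\mathcal{T}_\vartheta}_{L^\infty(\mathbb{U})}\leq\delta_n$ for all $\boldsymbol{\nu}\in\Lambda_n$, $\mathrm{width}(\Psi_{\Lambda_n,\delta_n})=\mathcal{O}(\snorm{\Lambda_n}\,m(\Lambda_n))=\mathcal{O}(n^2)$ and $\mathrm{depth}(\Psi_{\Lambda_n,\delta_n})=\mathcal{O}(\log_2 m(\Lambda_n))=\mathcal{O}(\log_2 n)$ by Step 2. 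Folding the linear output map $\boldsymbol{z}\mapsto\sum_{\boldsymbol{\nu}\in\Lambda_n}c^{(2)}_{\boldsymbol{\nu}}z_{\boldsymbol{\nu}}$ into the last affine layer of $\Psi_{\Lambda_n,\delta_n}$ yields a single tanh ANN $\Psi^{(n)}_{\boldsymbol{\nu},\delta}$ of width $\mathcal{O}(n^2)$ and depth $\mathcal{O}(\log_2 n)$ with $\Psi^{(n)}_{\boldsymbol{\nu},\delta}\circ\mathcal{T}_\vartheta=\sum_{\boldsymbol{\nu}\in\Lambda_n}c^{(2)}_{\boldsymbol{\nu}}\,(\Psi_{\boldsymbol{\nu},\delta_n}\circ\mathcal{T}_\vartheta)$. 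Since $\varrho$ is a probability measure, $\norm{\cdot}_{L^2(\mathbb{U})}\leq\norm{\cdot}_{L^\infty(\mathbb{U})}$, so the triangle inequality and \cref{eq:plan_ccs_bound} give
\begin{align*}
	\norm{\Phi-\Psi^{(n)}_{\boldsymbol{\nu},\delta}\circ\mathcal{T}_\vartheta}_{L^2(\mathbb{U})}
	&\leq C\,(n+1)^{-(1/p-1/2)}+\sum_{\boldsymbol{\nu}\in\Lambda_n}\snorm{c^{(2)}_{\boldsymbol{\nu}}}\,\norm{P^{(2)}_{\boldsymbol{\nu}}-\Psi_{\boldsymbol{\nu},\delta_n}\circ\mathcal{T}_\vartheta}_{L^\infty(\mathbb{U})}\\
	&\leq C\,(n+1)^{-(1/p-1/2)}+S\,\delta_n\leq(C+1)\,n^{-(1/p-1/2)},
\end{align*}
which is the claim after relabelling the constant.

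\textbf{Expected main obstacle.} The bulk of the argument is bookkeeping on top of results already available in the paper; the step requiring genuine care is Step 1, where the non-holomorphic dependence of $\snorm{u^\infty_\y}^2$ on $u^\infty_\y$ forces the detour through $\Re$ and $\Im$ and hence through \cref{lem:realandimaginarybpe}. The secondary point to check is the accounting in Step 3: that the width and depth bounds of \cref{lmm:approx_pol} are independent of $\delta$ (so $\delta_n$ can be taken as small as needed), together with the combinatorial facts $m(\Lambda_n)=\mathcal{O}(n)$ and $\operatorname{supp}(\Lambda_n)\subseteq\{1,\dots,n\}$, which are what pin the $\mathcal{O}(n^2)$ width and $\mathcal{O}(\log_2 n)$ depth and allow the linear output layer to be absorbed without changing these orders.
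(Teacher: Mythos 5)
There is a genuine gap in your Step 2, at the claim that the best $n$-term set $\Lambda_n$ delivered by \cref{thm:sparsepolynomial} can be arranged to satisfy $\bigcup_{\boldsymbol{\nu}\in\Lambda_n}\operatorname{supp}(\boldsymbol{\nu})\subseteq\{1,\dots,n\}$. Downward closedness only controls the multi-index partial order, not \emph{which} coordinates appear: the set $\{\boldsymbol{0},\boldsymbol{e}_{100}\}$ is downward closed of cardinality $2$ but its support is $\{100\}$. What is needed is the stronger \emph{anchored} property (if $\boldsymbol{e}_j$ is in the set, so are $\boldsymbol{e}_1,\dots,\boldsymbol{e}_j$), and this does not come for free from the hypothesis that $\boldsymbol{b}$ has a monotone majorant — your parenthetical "fixes a decreasing reordering" is not an argument, since $\boldsymbol{b}$ itself need not be monotone and the set from \cref{thm:sparsepolynomial} is only guaranteed downward closed. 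The paper closes exactly this gap: it first observes that $\Phi$ is also $(\widetilde{\boldsymbol{b}},p,\varepsilon)$-holomorphic for the monotone majorant $\widetilde{\boldsymbol{b}}$ (citing \cite[Corollary 8.2]{ABD22}), introduces the explicit set $\widetilde{\Lambda}_n$ of multi-indices with $\prod_{k:\nu_k\neq0}(\nu_k+1)\leq n$ and $\nu_k=0$ for $k>n$, which contains all anchored sets of size at most $n$ (\cite[Proposition 2.18]{ABW22}), invokes \cite[Theorem 3.33]{ABW22} to bound the coefficient tail outside $\widetilde{\Lambda}_n$ by $C(n+1)^{-(1/p-1/2)}$, and then works with $\Lambda=\Lambda_n\cap\widetilde{\Lambda}_n$, whose support lies in $\{1,\dots,n\}$ and whose $\ell^1$-order $m(\Lambda)$ is at most $n$ by construction; the truncation error then splits into the two tails (outside $\Lambda_n$ and outside $\widetilde{\Lambda}_n$). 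Without this anchoring step your choice $\vartheta=\{1,\dots,n\}$ in \cref{lmm:approx_pol} is not justified.

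A secondary, fixable flaw: your justification of $m(\Lambda_n)\leq n-1$ is incorrect as stated, since $\norm{\boldsymbol{\nu}}_1=m$ with $\nu_j>0$ does \emph{not} imply $m\boldsymbol{e}_j\in\Lambda_n$ unless all the mass of $\boldsymbol{\nu}$ sits in coordinate $j$; the correct elementary argument is that any chain $\boldsymbol{0}=\boldsymbol{\mu}_0<\boldsymbol{\mu}_1<\dots<\boldsymbol{\mu}_m=\boldsymbol{\nu}$ obtained by raising one coordinate at a time consists of $m+1$ distinct elements of $\Lambda_n$, so $m\leq n-1$ (the paper instead gets $m(\Lambda)\leq n$ directly from $\Lambda\subset\widetilde{\Lambda}_n$). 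Your Step 1 (holomorphy of $\Phi$ via \cref{lem:realandimaginarybpe} applied to $\Re\,u^\infty_\y$ and $\Im\,u^\infty_\y$, then squaring and summing) and Step 3 (choice of $\delta_n$, $\ell^1$-summability of the Legendre coefficients, absorbing the linear output combination into the last affine layer, and the width/depth accounting) match the paper's argument.
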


\begin{proof}
In analogy to \cref{thm:paramertric_reg_theta},
it follows from \cref{assump:parametric_boundary}, 
\cref{assump:regularity_patches}, and
\cref{lem:realandimaginarybpe} that the
map $\Phi:\mathbb{U} \mapsto \IR$ from \cref{eq:map_ff} is
$(\boldsymbol{b},p,\varepsilon)$-holomorphic and continuous.
Let $\widetilde{\boldsymbol{b}}$ be the monotone majorant of 
$\boldsymbol{b}$. Then, as discussed in the proof of
\cite[Corollary 8.2]{ABD22}, $\Phi$ as in 
is also $(\widetilde{\boldsymbol{b}},p,\varepsilon)$-holomorphic and continuous.
In analogy to \cref{eq:legendreexpansion}, we set
$\Phi
	=
	\sum_{\boldsymbol{\nu} \in \mathcal{F}} 
	c_{\boldsymbol{\nu}}^{(2)}
	P_{\boldsymbol{\nu}}^{(2)}
$, with $\left(c_{\boldsymbol{\nu}}^{(2)}\right)_{\boldsymbol{\nu} \in \mathcal{F}} $
being the Legendre coefficients of \cref{eq:map_ff}. 
Set
\begin{equation*}
	\widetilde{\Lambda}_n
	\coloneqq
	\left \{
		\boldsymbol{\nu} 
		=
		(\nu_k)_{k\in \IN}
		\in 
		\mathcal{F}:
		\:
		\prod_{k: \nu_k \neq 0}
		\left(
			\nu_k
			+
			1
		\right)
		\leq 
		n,
		\;
		\nu_k = 0,
		\;
		k>n
\right\}.
\end{equation*}
This set contains all \emph{anchored} sets of a size at most $n$
as pointed out in \cite[Proposition 2.18]{ABW22}. By anchored
set we mean a downward closed set satisfying for each $j\in \IN$
\begin{equation}
	\boldsymbol{e}_j
	\in
	\widetilde{\Lambda}_n
	\Rightarrow
	\{
	\boldsymbol{e}_1,
	\dots,
	\boldsymbol{e}_j
	\}
	\in 
	\widetilde{\Lambda}_n.
\end{equation}
Thus, recalling that \cref{thm:sparsepolynomial} implies that
$\left({c_{\boldsymbol{\nu}}^{(2)}} \right)_{\boldsymbol{\nu} \in \mathcal{F}}$ belong to $\ell^p_m(\mathcal{F})$ and according to \cite[Theorem 3.33]{ABW22} it holds
\begin{equation}
	\sqrt{
    \sum_{\boldsymbol{\nu} \notin 	\widetilde{\Lambda}_n}  
	\snorm{
		c_{\boldsymbol{\nu}}^{(2)}
	}^2
    }
	\leq
	C
	(n+1)^{-\left(\frac{1}{p} - \frac{1}{2}\right)}.
\end{equation}
Moreover, as a consequence of \cref{thm:sparsepolynomial},
for each $n\in \IN$ there exists a downward closed set $\Lambda_n$
of cardinality $n$ such that
\begin{equation}
	\norm{
		\Phi
		-
		\sum_{\boldsymbol{\nu} \in \Lambda_n} 
		c_{\boldsymbol{\nu}}^{(2)}
	    P_{\boldsymbol{\nu}}^{(2)}
	}_{L^{2}(\mathbb{U})}
	=
    \sqrt{
	\sum_{\boldsymbol{\nu} \notin \Lambda_n}
	\snorm{
		c_{\boldsymbol{\nu}}^{(2)}
	}^2
    }
	\leq 
	C(n+1)^{
	-
	\left(
	\frac{1}{p}-\frac{1}{2}
	\right)
	}, 
\end{equation}
and $\left(c_{\boldsymbol{\nu}}^{(2)}\right)_{\boldsymbol{\nu} \in \mathcal{F}} 
\in \ell^p_m(\mathcal{F})$.
Being $\widetilde{\boldsymbol{b}}$ monotonically increasing implies
that the set $\Lambda_n$ is anchored as well.
Set $\Lambda = \Lambda_n \cap \widetilde{\Lambda}_n$.
Then,
\begin{equation}
	\norm{
	\Phi
	-
	\sum_{\boldsymbol{\nu} \in \Lambda} 
	c_{\boldsymbol{\nu}}^{(2)}
	\Phi_{\boldsymbol{\nu},\delta}
	\circ
	\mathcal{T}_\vartheta
	}_{L^{2}(\mathbb{U})} 
	\leq
	(\spadesuit)
	+
	(\clubsuit),
\end{equation}
where
\begin{equation}
\begin{aligned}
	(\spadesuit)^2
	=
	\norm{
		\Phi
		-
		\sum_{\boldsymbol{\nu} \in \Lambda} 
		c_{\boldsymbol{\nu}}^{(2)}
	    P_{\boldsymbol{\nu}}^{(2)}
	}^2_{L^{2}(\mathbb{U})} 
	&
	=
	\sum_{\boldsymbol{\nu} \notin \Lambda} 
	\snorm{
		c_{\boldsymbol{\nu}}^{(2)}
	}^2
	\\
	&
	<
	\sum_{\boldsymbol{\nu} \notin \Lambda_n}
	\snorm{
		c_{\boldsymbol{\nu}}^{(2)}
	}^2
	+
	\sum_{\boldsymbol{\nu} \notin 	\widetilde{\Lambda}_n}  
	\snorm{
		c_{\boldsymbol{\nu}}^{(2)}
	}^2
	\\
	&
	\leq
	C
	(n+1)^{
	-2
	\left(
	\frac{1}{p}-\frac{1}{2}
	\right)}
\end{aligned}
\end{equation}
and
\begin{equation}
\begin{aligned}
	(\clubsuit)
	&
	=
	\norm{
		\sum_{{\boldsymbol{\nu}} \in \Lambda} 
	c_{\boldsymbol{\nu}}^{(2)}
	P_{\boldsymbol{\nu}}^{(2)}
	-
	\sum_{\boldsymbol{\nu} \in \Lambda} 
	c_{\boldsymbol{\nu}}^{(2)}
	\Phi_{\boldsymbol{\nu},\delta}
	\circ
	\mathcal{T}_\vartheta
	}_{L^{2}(\mathbb{U})} 
	\\
	&
	\leq
	\sum_{\boldsymbol{\nu} \in \Lambda} 
	\snorm{
		c_{\boldsymbol{\nu}}^{(2)}
	}
	\norm{
		P_{\boldsymbol{\nu}}^{(2)}
		-
		\Phi_{\boldsymbol{\nu},\delta}
		\circ
		\mathcal{T}_\vartheta
	}_{L^{\infty}(\mathbb{U})}.
\end{aligned}
\end{equation}
To conclude, we observe that for any $\boldsymbol{\nu} \in \Lambda\subset\widetilde{\Lambda}_n$,  
one has $\text{supp}(\boldsymbol{\nu}) = \{1,\dots,n \}$. Hence,
observing that
$\left(\snorm{c_{\boldsymbol{\nu}}^{(2)}} \right)_{\boldsymbol{\nu} \in \mathcal{F}}$ belongs to $\ell^p_m(\mathcal{F})\subset\ell^1_m(\mathcal{F})$,
\cref{lmm:approx_pol} with 
$\delta =(n+1)^{-\left(\frac{1}{p}-\frac{1}{2}\right)}$ 
and $\vartheta = \{1,\dots,n\}$ yields the final result.
Also, since any $\boldsymbol{\nu} \in \Lambda$ implies 
$\boldsymbol{\nu} \in \widetilde{\Lambda}_n$, we have
\begin{align}
m(\Lambda)=\max_{\boldsymbol{\nu}\in\Lambda}\|\boldsymbol{\nu}\|_1\leq n,
\end{align}
thus yielding the width and depth estimates stated in this result.
\end{proof}

\section{Conclusion and Outlook}\label{sec:concl}
In this work, we consider a class of boundary integral 
operators set on a collection of parametrically defined boundaries.
In this setting, we establish the holomorphic dependence of said
operators--as an element of
a Banach spaces of bounded linear operators--upon the parametric 
variables, i.e., we show that the parameter-to-operator map is holomorphic.
This result allows us to establish parametric holomorphy 
results for the solution of boundary integral equations involving the aforementioned 
operators, provided the invertibility of the operator for each parametric input, i.e.,
we show that the parameter-to-solution map is holomorphic.
Though we only consider BIOs in a $L^2$-based setting, 
this results enables parametric holomorphy analysis in three dimensional domains
defined on affinely parametrized polygonal/polyhedral domains.

To show the applicability of our parametric holomorphy results,
we explore the implications on
the boundary integral formulation of the sound-soft acoustic
wave scattering. In particular, herein we discussed the implications
in sparse polynomial approximation of the parameter-to-solution map,
reduced order modelling, in establishing parametric regularity of
the posterior distribution in Bayesian shape inversion, and in
the construction of efficient surrogates with artificial neural networks 
for the approximation of the far-field pattern.
Even though not thoroughly discussed, using the parametric regularity
and examples established in this article allows to prove dimension-independent
convergence rates for multilevel algorithms in forward and inverse acoustic
domain uncertainty quantification such as in \cite{DHJM2022}, where the domain
deformations were implemented using NURBS.

Further work comprises exploring the implications of 
the parametric holomorphy result for BIOs. A canonical example
is the analysis and implementation of the Galerkin proper orthogonal decomposition-ANN
as in \cite{HU18} for the sound-soft acoustic scattering problem.

\section*{Acknowledgement}
The authors acknowledge the support by the Deutsche Forschungsgemeinschaft (DFG, German Research Foundation) under Germany's Excellence Strategy
-- GZ 2047/1, Projekt-ID 390685813.

\appendix
\section{Auxilliary results}
\begin{lemma}\label{lem:realandimaginarybpe}
Let $f: \mathbb{U} \rightarrow \IC$ be
$(\boldsymbol{b},p,\varepsilon)$-holomorphic and continuous
for some $\varepsilon>0$. Then $\Re(f)$ and $\Im(f)$, i.e, 
real and imaginary part of $f$, are
$(\boldsymbol{b},p,\varepsilon)$-holomorphic and continuous.
\end{lemma}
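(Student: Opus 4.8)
The plan is to exploit the invariance of the domains $\mathcal{O}_{\boldsymbol{\rho}}$ and $\mathcal{E}_{\boldsymbol{\rho}}$ under complex conjugation. The naive idea of taking the real part of the given complex extension of $f$ cannot work, since a non-constant real-valued function on an open subset of $\IC$ is never holomorphic. Instead I would manufacture a holomorphic \emph{reflected companion} of $f$.

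Fix an arbitrary $(\boldsymbol{b},\varepsilon)$-admissible sequence $\boldsymbol{\rho}$ and let $\mathbf{z}\mapsto f(\mathbf{z})$ be the complex extension of $f$ provided by $(\boldsymbol{b},p,\varepsilon)$-holomorphy: it is holomorphic in each $z_j$ on $\mathcal{O}_{\boldsymbol{\rho}}$, satisfies $\sup_{\mathbf{y}\in\mathbb{U}}\snorm{f(\mathbf{y})}\le C_0$, and $\sup_{\mathbf{z}\in\mathcal{E}_{\boldsymbol{\rho}}}\snorm{f(\mathbf{z})}\le C_\varepsilon$. Since $[-1,1]\subset\IR$, each set $\mathcal{O}_{\rho_j}=\{z\in\IC:\operatorname{dist}(z,[-1,1])\le\rho_j-1\}$ is invariant under $z\mapsto\bar z$, and each Bernstein ellipse $\mathcal{E}_{\rho_j}$ is symmetric about the real axis; consequently $\bar{\mathbf{z}}\coloneqq(\bar z_j)_{j\ge1}$ belongs to $\mathcal{O}_{\boldsymbol{\rho}}$, respectively to $\mathcal{E}_{\boldsymbol{\rho}}$, whenever $\mathbf{z}$ does. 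I would then set $g(\mathbf{z})\coloneqq\overline{f(\bar{\mathbf{z}})}$ for $\mathbf{z}\in\mathcal{O}_{\boldsymbol{\rho}}$ and check holomorphy in each variable separately: freezing all coordinates but the $j$-th, the map $z_j\mapsto g(\mathbf{z})$ has the form $w\mapsto\overline{h(\bar w)}$ with $h$ holomorphic, and its difference quotient equals the complex conjugate of the difference quotient of $h$, hence converges. Moreover $\snorm{g(\mathbf{z})}=\snorm{f(\bar{\mathbf{z}})}$, so $g$ is bounded by $C_\varepsilon$ on $\mathcal{E}_{\boldsymbol{\rho}}$ and by $C_0$ on $\mathbb{U}$.

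Since $\bar{\mathbf{y}}=\mathbf{y}$ for $\mathbf{y}\in\mathbb{U}$, we have $g(\mathbf{y})=\overline{f(\mathbf{y})}$ there, so on $\mathbb{U}$ it holds
\[
\Re(f)=\tfrac12\bigl(f+g\bigr),\qquad\Im(f)=\tfrac1{2\imath}\bigl(f-g\bigr).
\]
I would take $\mathbf{z}\mapsto\tfrac12(f(\mathbf{z})+g(\mathbf{z}))$ and $\mathbf{z}\mapsto\tfrac1{2\imath}(f(\mathbf{z})-g(\mathbf{z}))$ as the complex extensions of $\Re(f)$ and $\Im(f)$; by the previous step these are holomorphic in each variable on $\mathcal{O}_{\boldsymbol{\rho}}$ and bounded by $C_\varepsilon$ on $\mathcal{E}_{\boldsymbol{\rho}}$, which verifies \cref{def:bpe_holomorphy2} and \cref{def:bpe_holomorphy3} of \cref{def:bpe_holomorphy} with the same $\boldsymbol{b}$, $p$, $\varepsilon$ (and even the same constants). \Cref{def:bpe_holomorphy1} is immediate from $\snorm{\Re(f)},\snorm{\Im(f)}\le\snorm{f}\le C_0$ on $\mathbb{U}$, and continuity of $\Re(f),\Im(f)$ follows by composing the continuous map $f$ with the continuous maps $\Re,\Im\colon\IC\to\IR$; the degenerate coordinates with $\rho_j=1$, where $\mathcal{O}_{\rho_j}=\mathcal{E}_{\rho_j}=[-1,1]$, need no separate treatment. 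The only genuine point of the argument, and thus the main obstacle (modest as it is), is the construction of the holomorphic reflection $g$ together with the observation that both $\mathcal{O}_{\boldsymbol{\rho}}$ and $\mathcal{E}_{\boldsymbol{\rho}}$ are conjugation-invariant; everything else is routine verification of the three items of \cref{def:bpe_holomorphy}.
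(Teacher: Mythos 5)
Your proposal is correct and follows essentially the same route as the paper's proof: both construct the conjugate-reflected extension $\z\mapsto\overline{f(\overline{\z})}$ (your $g$), use the conjugation invariance of $\mathcal{O}_{\boldsymbol{\rho}}$, verify holomorphy through the conjugated difference quotient, and recover $\Re(f)$ and $\Im(f)$ as $\tfrac12(f+g)$ and $\tfrac{1}{2\imath}(f-g)$. Your write-up is, if anything, slightly more complete in explicitly checking the bound on $\mathcal{E}_{\boldsymbol{\rho}}$ and the imaginary part, but the key idea is identical.
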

\begin{proof}
For each $\y \in \mathbb{U}$ we define
\begin{equation}
	f_{\Re}
	(\y)
	\coloneqq
	\Re
	(f(\y))
	=
	\frac{
		f(\y)
		+
		\overline{
			f(\y)
		}
	}{2}
\end{equation}
and claim that $\y \mapsto f_{\Re}(\y)$ is 
$(\boldsymbol{b},p,\varepsilon)$-holomorphic and continuous.
To this end, for $\z \in \mathcal{O}_{\boldsymbol{\rho}}$, we define the complex extension
\begin{equation}\label{eq:extension_to_complex}
	f^{\IC}_{\Re}
	(\z)
	\coloneqq
	\frac{
		f(\z)
		+
		\overline{
			f(\overline{\z})
		}
	}{2},
\end{equation}
and claim that for each $\z \in \mathcal{O}_{\boldsymbol{\rho}}$ the complex derivative of $\z \mapsto f^{\IC}_{\Re}(\z)$ in direction $z_j$ is
\begin{equation}\label{eq:realpartderivative}
	\partial_{z_j}
	f^{\IC}_{\Re}
	(\z)
	=
	\frac{
		\partial_{z_j}
		f(\z)
		+
		\overline{
			\partial_{z_j}
			f(\overline{\z})
		}
	}{2},
	\quad
	{\z} \in \mathcal{O}_{\boldsymbol{\rho}}.
\end{equation}
Therein, $\partial_{z_j} f(\overline{\z})$
corresponds to the derivative of $f(\z)$
in the direction $z_j$ and evaluated at 
$\overline{\z} \in \mathcal{O}_{\boldsymbol{\rho}}$
(observe that if ${\z} \in \mathcal{O}_{\boldsymbol{\rho}}$,
then $\overline{\z} \in \mathcal{O}_{\boldsymbol{\rho}}$).
Now, we compute
\begin{equation}
\begin{aligned}
	\snorm{
	\frac{
		f^{\IC}_{\Re}({\z} +h \boldsymbol{e}_j)
		-
		f^{\IC}_{\Re}({\z})
		-
		\partial_{z_j}
		f^{\IC}_{\Re}
		(\z)
	}{
		h
	}
	}
	\leq
	&
	\frac{1}{2}
	\snorm{
		\frac{
			f({\z} +h \boldsymbol{e}_j)
			-
			f(\z)
			-
			\partial_{z_j}
		f(\z)
		}{
		h
		}
	}
	\\
	&
	+
	\frac{1}{2}
	\snorm{
		\frac{
			\overline{f(\overline{{\z} +h \boldsymbol{e}_j})}
			-
			\overline{f(\overline{\z})}
			-
			\overline{
				\partial_{z_j}
				f(\overline{\z})
			}
		}{
		h
		}
	}.
\end{aligned}
\end{equation}
On the one hand, for any $\bz \in \mathcal{O}_{\boldsymbol{\rho}}$
\begin{equation}
	\lim_{\snorm{h} \rightarrow 0^+}
	\snorm{
		\frac{
			f({\z} +h \boldsymbol{e}_j)
			-
			f(\z)
			-
			\partial_{z_j}
		f(\z)
		}{
		h
		}
	}
	=0.
\end{equation}
On the other hand, for any $\z \in \mathcal{O}_{\boldsymbol{\rho}}$\
\begin{equation}
	\lim_{\snorm{h} \rightarrow 0^+}
	\snorm{
		\frac{
			\overline{f(\overline{{\z} +h \boldsymbol{e}_j})}
			-
			\overline{f(\overline{\z})}
			-
			\overline{
				\partial_{z_j}
				f(\overline{\z})
			}
		}{
		h
		}
	}
	=
	\lim_{\snorm{h} \rightarrow 0^+}
	\snorm{
		\frac{
			{f(\overline{{\z} +h \boldsymbol{e}_j})}
			-
			{f(\overline{\z})}
			-
			{
				\partial_{z_j}
				f(\overline{\z})
			}
		}{
		h
		}
	}
	=
	0.
\end{equation}
Thus $\Re \left(f(\y)\right)$ is
$(\boldsymbol{b},p,\varepsilon)$-holomorphic.
Similarly, one can argue that $\Im \left(f(\y)\right)$ is 
$(\boldsymbol{b},p,\varepsilon)$-holomorphic and continuous.
\end{proof}

\bibliographystyle{siam}
\bibliography{ref}

\end{document}